\documentclass[a4paper,11pt,reqno]{amsart}
\usepackage{amsmath}
\usepackage{amsthm,enumerate}


\usepackage[pagewise]{lineno}

\usepackage{graphicx}
\usepackage{amssymb}

\usepackage{appendix}








\usepackage[italian,english]{babel}

\selectlanguage{english}

\usepackage[utf8x]{inputenc}
\usepackage{fancyhdr}

\usepackage{calc}
\usepackage{url}

\DeclareMathOperator{\dive}{div}
\DeclareMathOperator{\s}{\sigma}

\usepackage[text={6.3in,8.6in},centering]{geometry}

\usepackage{srcltx, inputenc}

\usepackage[T1]{fontenc}


\usepackage[usenames,dvipsnames]{color}

\fancyhf{}

\makeatletter
\def\cleardoublepage{\clearpage\if@twoside \ifodd\c@page\else%
         \hbox{}%
     \thispagestyle{empty}
     \newpage%
     \if@twocolumn\hbox{}\newpage\fi\fi\fi}
\makeatother

\theoremstyle{plain}
\newtheorem{theorem}{Theorem}[section]

\newtheorem{definition}[theorem]{Definition}

\newtheorem{lemma}[theorem]{Lemma}

\newtheorem{proposition}[theorem]{Proposition}
\newtheorem{remark}[theorem]{Remark}

\numberwithin{equation}{section}
\theoremstyle{definition}

\newcommand{\R}{\ensuremath{\mathbb{R}}}

\begin{document}

\title[]{Global existence for reaction-diffusion evolution equations driven by the $\text{p}$-Laplacian on manifolds}
\author{Gabriele Grillo}
\address{\hbox{\parbox{5.7in}{\medskip\noindent{Dipartimento di Matematica,\\
Politecnico di Milano,\\
   Piazza Leonardo da Vinci 32, 20133 Milano, Italy.
   \\[3pt]
        \em{E-mail address: }{\tt
          gabriele.grillo@polimi.it
          }}}}}

\author{Giulia Meglioli}
\address{\hbox{\parbox{5.7in}{\medskip\noindent{Dipartimento di Matematica,\\
Politecnico di Milano,\\
   Piazza Leonardo da Vinci 32, 20133 Milano, Italy.
   \\[3pt]
        \em{E-mail address: }{\tt
          giulia.meglioli@polimi.it
          }}}}}

\author{Fabio Punzo}
\address{\hbox{\parbox{5.7in}{\medskip\noindent{Dipartimento di Matematica,\\
Politecnico di Milano,\\
   Piazza Leonardo da Vinci 32, 20133 Milano, Italy. \\[3pt]
        \em{E-mail address: }{\tt
          fabio.punzo@polimi.it}}}}}

\keywords{Nonlinear reaction diffusion equations. Riemannian manifolds. Global existence.}

\maketitle

\maketitle              

\begin{abstract}
We consider reaction-diffusion equations driven by the $p$-Laplacian on noncompact, infinite volume manifolds assumed to support the Sobolev inequality and, in some cases, to have $L^2$ spectrum bounded away from zero, the main example we have in mind being the hyperbolic space of any dimension. It is shown that, under appropriate conditions on the parameters involved and smallness conditions on the initial data, global in time solutions exist and suitable smoothing effects, namely explicit bounds on the $L^\infty$ norm of solutions at all positive times, in terms of $L^q$ norms of the data. The geometric setting discussed here requires significant modifications w.r.t. the Euclidean strategies.
\end{abstract}

\section{Introduction}
We investigate existence of nonnegative global in time solutions to the quasilinear parabolic problem
\begin{equation}\label{problema}
\begin{cases}
 u_t = \operatorname{div} \left(|\nabla u|^{p-2}\nabla u \right)+\, u^{\s} & \text{in}\,\, M \times (0,T) \\
u =u_0 &\text{in}\,\, M\times \{0\}\,,
\end{cases}
\end{equation}
where $M$ is an $N$-dimensional, complete, noncompact, Riemannian manifold of infinite volume, whose metric is indicated by $g$, and where $\dive$ and $\nabla$ are respectively the divergence and the gradient with respect to $g$ and $T\in(0,+\infty]$. We shall assume throughout this paper that \begin{equation}\label{costanti}
     \frac{2N}{N+1}<p<N, \quad\quad \s>p-1.\end{equation}
The problem is posed in the Lebesgue spaces
$$L^q(M)=\left\{v:M\to\R\,\, \text{measurable}\,\,  ,   \,\, \|v\|_{L^q}:=\left(\int_{M} v^q\,d\mu\right)^{1/q}<+\infty\right\},$$
where $\mu$ is the Riemannian measure on $M$. We also assume the validity of the Sobolev inequality:
\begin{equation}\label{S}
\text{(Sobolev inequality)}\quad\|v\|_{L^{p^*}(M)} \le \frac{1}{C_{s,p}} \|\nabla v\|_{L^p(M)}\quad \text{for any}\,\,\, v\in C_c^{\infty}(M),
\end{equation}
where $C_{s,p}>0$ is a constant and $p^*:=\frac{pN}{N-p}$. In some cases we also assume that the Poincaré inequality is valid, that is
\begin{equation}\label{P}
\text{(Poincar\'e inequality)}\quad \|v\|_{L^p(M)} \le \frac{1}{C_p} \| \nabla v\|_{L^p(M)} \quad \text{for any}\,\,\, v\in C_c^{\infty}(M),
\end{equation}
for some $C_p>0$. Observe that, for instance, \eqref{S} holds if $M$ is a Cartan-Hadamard manifold, i.e. a simply connected Riemannian manifold with nonpositive sectional curvatures, while \eqref{P} is valid when $M$ is a Cartan-Hadamard manifold satisfying the additional condition of having sectional curvatures bounded above by a constant $−c < 0$ (see, e.g. \cite{Grig1,Grig2}). Therefore, as it is well known, on $\R^N$ \eqref{S} holds, but \eqref{P} fails, whereas on the hyperbolic space both \eqref{S} and \eqref{P} are fulfilled.

\medskip

Global existence and finite time blow-up of solutions for problem \eqref{problema} has been deeply studied when $M=\R^N$, especially in the case $p=2$ (linear diffusion). The literature for this problem is huge and there is no hope to give a comprehensive review here. We just mention the fundamental result of Fujita, see \cite{F}, who shows that blow-up in a finite time occurs for all nontrivial nonnegative data when $\s<1+\frac2N$, while global existence holds, for $\s>1+ \frac 2N$, provided the initial datum is small enough in a suitable sense. Furthermore, the critical exponent $\s=1+\frac 2N$, belongs to the case of finite time blow-up, see e.g. \cite{H} for the one dimensional case, $N=1$, or \cite{KST} for $N>1$. For further results concerning problem \eqref{problema} with $p=2$ see e.g. \cite{CFG,DL,FI,L,Q,Pu3,S,Sun1,W,Y,Z}).
\small

Similarly, the case of problem \eqref{problema} when $M=\R^N$ and $p>1$ has attracted a lot of attention, see e.g. \cite{G1, G2, G3, Mitidieri, Mitidieri2, Poho, PT} and references therein. In particular, in \cite{Mitidieri2}, nonexistence of nontrivial weak solutions is proved for problem \eqref{problema} with $M=\R^N$ and
$$
p>\frac{2N}{N+1},\quad\quad \max\{1,p-1\}<\s\le p-1+\frac p N\,.
$$
Similar \it weighted \rm problems have also been treated. In fact, for any strictly positive measurable function $\rho:\R^N\to\R$, let us consider the weighted ${\textrm L}^{q}_{\rho}$ spaces
$${\textrm L}^q_{\rho}(\R^N)=\left\{v:\R^N\to\R\,\, \text{measurable}\,\,  ,   \,\, \|v\|_{L^q_{\rho}}:=\left(\int_{\R^N} v^q\rho(x)\,dx\right)^{1/q}<+\infty\right\}.$$
In \cite{MT1} problem
\begin{equation}\label{AA}
\begin{cases}
\rho(x) u_t = \operatorname{div} \left(|\nabla u|^{p-2}\nabla u \right)+\, \rho(x)u^{\s} & \text{in}\,\, \R^N \times (0,T) \\
u =u_0 &\text{in}\,\, \R^N\times \{0\}\,,
\end{cases}
\end{equation}
is addressed. In \cite[Theorem 1]{MT1}, it is showed that, when $p>2$, $\rho(x)=(1+|x|)^{-l}$, $0\le l<p$, $\s>p-1+\frac pN$, $ u_0\in {\textrm L}^1_{\rho}(\R^N)\cap {\textrm L}^s_{\rho}(\R^N)$ is sufficiently small, with $s>\frac{(N-l)(\s-p+1)}{p-l}$, then problem \eqref{AA} admits a global in time solution. Moreover, the solution satisfies a smoothing estimate ${\textrm L}^1_{\rho}-{\textrm L}^{\infty}$, in the sense that for sufficiently small data $ u_0\in {\textrm L}^1_{\rho}(\R^N)$, the corresponding solution is bounded, and a quantitive bound on the ${\textrm L}^{\infty}$ norm of the solution holds, in term of the ${\textrm L}^1_{\rho}(\R^N)$ norm of the initial datum. On the other hand, in \cite[Theorem 2]{MT1}, when $p>2$, $\rho(x)=(1+|x|)^{-l}$, $l\ge p$, $\s>p-1$, $ u_0\in {\textrm L}^1_{\rho}(\R^N)\cap {\textrm L}^s_{\rho}(\R^N)$ is sufficiently small, with $s>\frac{N}{p}(\s-p+1)$, then problem \eqref{AA} admits a global in time solution, which is bounded for positive times.
\medskip

On the other hand, existence and nonexistence of global in time solutions to problems closely related to problem \eqref{problema} have been investigated also in the Riemannian setting. The situation can be significantly different from the Euclidean situation, especially in the case of negative curvature. Infact, when dealing the the case of the $N$-dimensional \it hyperbolic space\rm, $M=\mathbb{H}^N$, it is known that when $p=2$, for all $\s>1$ and sufficiently small nonnegative data there exists a global in time solution, see \cite{BPT}, \cite{WY}, \cite{WY2}, \cite{Pu3}. A similar result has been also obtain when $M$ is a complete, noncompact, stochastically complete Riemannian manifolds with $\lambda_1(M)>0$, where $\lambda_1(M):=\inf \operatorname{spec}(-\Delta)$, see \cite{GMP3}. Stochastic completeness amounts to requiring that the linear heat semigroup preserves the identity, and is known to hold e.g. if the sectional curvature satisfies $\text{sec}(x)\,\ge -c d(x,o)^2$ for all $x\in M$ outside a given compact, and a suitable $c>0$, where $d$ is the Riemannian distance and $o$ is a given pole. Besides, it is well known that $\lambda_1(M)>0$ e.g. if $\text{sec}(x)\,\le -c<0$ for all $x\in M$. Therefore, the class of manifolds for which the results of \cite{GMP3} hold is large, since it includes e.g. all Cartan-Hadamard manifolds with curvature bounded away from zero and not diverging faster than quadratically at infinity.
\smallskip

Concerning problem \eqref{problema} with $p>1$, we refer the reader to \cite{MMP, MeMoPu} and references therein. In particular, in \cite{MMP}, nonexistence of global in time solutions on infinite volume Riemannian manifolds $M$ is shown under suitable weighted volume growth conditions. In \cite{MeMoPu}, problem \eqref{problema} with $M=\Omega$ being a bounded domain and $u^{\s}$ replaced by $V(x,t)u^{\s}$ is addressed, where $V$ is a positive potential. To be specific, nonexistence of nonnegative, global solutions is established under suitable integral conditions involving $V$, $p$ and $\s$.
\bigskip

In this paper, we prove the following results. Assume that the bounds \eqref{costanti} and the Sobolev inequality \eqref{S} hold, and besides that $\s>p-1+\frac pN$.
\begin{itemize}
\item[(a)] If $u_0\in {\textrm L}^{s}(M)\cap {\textrm L}^1(M)$ is sufficiently small, with $s>(\s-p+1)\frac Np$, then a global solution exists. Furthermore, a smoothing estimate of the type ${\textrm L}^1-{\textrm L}^{\infty}$ holds (see Theorem \ref{teo1}).
\item[(b)] If $u_0\in {\textrm L}^{(\s-p+1)\frac Np}(M)$ is sufficiently small, then a global solution exists. Furthermore, a smoothing estimate of the type ${\textrm L}^{(\s-p+1)\frac Np}-{\textrm L}^{\infty}$ holds (see Theorem \ref{teo11}), this being new even in the Euclidean case.
\item[(c)] In addition, in both the latter two cases, we establish a ${\textrm L}^{(\s-p+1)\frac Np}-{\textrm L}^q$ smoothing estimate, for any $(\s-p+1)\frac Np\le q<+\infty$ and an ${\textrm L}^q-{\textrm L}^q$ estimate for any $1<q<+\infty$, for suitable initial data $u_0$.
\end{itemize}

Now suppose that both the Sobolev inequality \eqref{S} and the Poincar\'e inequality \eqref{P} hold, and that \eqref{costanti} holds. This situation has of course no Euclidean analogue, as it is completely different from the case of a bounded Euclidean domain since $M$ is noncompact and of infinite measure.
Then:
\begin{itemize}
\item[(d)] If $u_0\in {\textrm L}^{s}(M)\cap {\textrm L}^{\s\frac Np}(M)$ is sufficiently small, with $s>\max\left\{(\s-p+1)\frac Np, 1\right\}$, then a global solution exists. Furthermore, a smoothing estimate of the type ${\textrm L}^s-{\textrm L}^{\infty}$ holds (see Theorem \ref{teo71}).
\item[(e)] In addition, we establish and ${\textrm L}^{\s\frac Np}-{\textrm L}^q$ estimate, for any $\s\frac Np\le q<+\infty$ and an ${\textrm L}^q-{\textrm L}^q$ estimate for any $1<q<+\infty$, for suitable initial data $u_0$.
\end{itemize}
Note that, when we require both \eqref{S} and \eqref{P}, the assumption on $\sigma$ can be relaxed.
\smallskip

In order to prove (a), we adapt the methods exploited in \cite[Theorem 1]{MT1}. Moreover, (b), (c) and (e) are obtained by means of an appropriate use of the Moser iteration technique, see also \cite{GMP2} for a similar result in the case of the porous medium equation with reaction. The proof of statement (d) is inspired \cite[Theorem 2]{MT1}; however, significant changes are needed since in \cite{MT1} the precise form of the weight $\rho$ is used.
\bigskip

The paper is organized as follows. The main results are stated in Section \ref{prel}. Section \ref{Lq} is devoted to ${\textrm L}^{q_0}-{\textrm L}^q$ and ${\textrm L}^q-{\textrm L}^q$ smoothing estimates, mainly instrumental to what follows. Some a priori estimates are obtained in Section \ref{aux}. In Sections \ref{dim1}, \ref{dim2} and \ref{dim3}, Theorems \ref{teo1}, \ref{teo11} and \ref{teo71} are proved, respectively. Finally, in Section \ref{porous} we state similar results for the porous medium equation with reaction; the proofs are omitted since they are entirely similar to the p-Laplacian case.

\section{Statements of the main results}\label{prel}

Solutions to \eqref{problema} will be meant in the weak sense, according to the following definition.

\begin{definition}\label{def21}
Let $M$ be a complete noncompact Riemannian manifold of infinite volume. Let $p>1$, $\s>p-1$ and $u_0\in{\textrm L}^{1}_{\textit{loc}}(M)$, $u_0\ge0$.
We say that the function $u$ is a weak solution to problem \eqref{problema} in the time interval $[0,T)$ if
$$
u\in {\textrm L}^2((0,T);{\textrm W}^{1,p}_{loc}(M)) \cap {\textrm L}^{\s}_{loc}(M\times(0,T)) 
$$
and for any $\varphi \in C_c^{\infty}(M\times[0,T])$ such that $\varphi(x,T)=0$ for any $x\in M$, $u$ satisfies the equality:
\begin{equation*}
\begin{aligned}
-\int_0^T\int_{M} \,u\,\varphi_t\,d\mu\,dt =&\int_0^T\int_{M} |\nabla u|^{p-2}\left \langle \nabla u, \nabla \varphi \right \rangle\,d\mu\,dt\,+ \int_0^T\int_{M} \,u^{\s}\,\varphi\,d\mu\,dt \\
& +\int_{M} \,u_0(x)\,\varphi(x,0)\,d\mu.
\end{aligned}
\end{equation*}
\end{definition}

First we consider the case that $\s>p-1+\frac p N$ and that the Sobolev inequality holds on $M$. In order to state our results, we define
\begin{equation}\label{sig0}\s_0:=(\s-p+1)\frac{N}{p}.\end{equation} Observe that $\s_0>1$ whenever $\s>p-1+\frac pN$. Our first result is a generalization of \cite{MT1} to the geometric setting considered here.

\begin{theorem}\label{teo1}
Let $M$ be a complete, noncompact, Riemannian manifold of infinite volume such that the Sobolev inequality \eqref{S} holds. Assume \eqref{costanti} holds and, besides, that $\s>p-1+\frac pN$, $s>\s_0$ and $u_0\in{\textrm L}^{s}(M)\cap L^1(M)$, $u_0\ge0$ where $\s_0$ has been defined in \eqref{sig0}.

\begin{itemize}
\item[(i)] Assume that
\begin{equation}\label{epsilon0}
\|u_0\|_{\textrm L^{s}(M)}\,<\,\varepsilon_0,\quad  \|u_0\|_{\textrm L^{1}(M)}<\,\varepsilon_0\,,
\end{equation}
with $\varepsilon_0=\varepsilon_0(\s,p,N, C_{s,p})>0$ sufficiently small. Then problem \eqref{problema} admits a solution for any $T>0$,  in the sense of Definition \ref{def21}. Moreover, for any $\tau>0,$ one has $u\in L^{\infty}(M\times(\tau,+\infty))$ and there exists a constant $\Gamma>0$ such that, one has
\begin{equation}\label{eq21tot}
\|u(t)\|_{L^{\infty}(M)}\le \Gamma\, t^{-\alpha}\,\|u_0\|_{L^{1}(M)}^{\frac{p}{N(p-2)+p}}\,\quad \text{for all $t>0$,}
\end{equation}
where
\begin{equation*}
\alpha:=\frac{N}{N(p-2)+p}\,.
\end{equation*}
\item[(ii)] Let  $\s_0\le q<\infty$. If
\begin{equation}\label{eps3a}
\|u_0\|_{L^{\s_0}(M)}< \hat \varepsilon_0
\end{equation}
for $\hat\varepsilon_0=\hat\varepsilon_0(\s, p , N, C_{s,p}, q)>0$ small enough, then there exists a constant $C=C(\s,p,N,\varepsilon_0,C_{s,p}, q)>0$ such that
\begin{equation}\label{eq23}
\|u(t)\|_{L^q(M)}\le C\,t^{-\gamma_q} \|u_{0}\|^{\delta_q}_{L^{\s_0}(M)}\quad \textrm{for all }\,\, t>0\,,
\end{equation}
where
$$
\gamma_q=\frac{1}{\s-1}\left[1-\frac{N(\s-p+1)}{pq}\right],\quad \delta_q=\frac{\s-p+1}{\s-1}\left[1+\frac{N(p-2)}{pq}\right]\,.
$$
 \item[(iii)]Finally, for any $1\le q<\infty$, if $u_0\in {\textrm L}^q(M)\cap\textrm L^{\s_0}(M)$ and
\begin{equation}\label{eps2}
\|u_0\|_{\textrm L^{\s_0}(M)}\,<\,\varepsilon
\end{equation}
with $\varepsilon=\varepsilon(\s,p,N, C_{s,p},q)>0$ sufficiently small, then
\begin{equation}\label{eq22}
\|u(t)\|_{L^q(M)}\le  \|u_{0}\|_{L^q(M)}\quad \textrm{for all }\,\, t>0\,.
\end{equation}
\end{itemize}
\end{theorem}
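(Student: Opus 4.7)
I would derive all three claims from one family of $L^q$ energy estimates obtained on a compact exhaustion $\Omega_R\uparrow M$ with Dirichlet data, the smallness of the initial datum transferring uniformly in $R$ so that compactness yields a global weak solution in the limit. The proof logically runs (iii)$\to$(ii)$\to$(i): part (iii) gives propagation of $L^{\s_0}$-smallness, which unlocks the quantitative decay of (ii) and, via Moser iteration, the $L^1$--$L^\infty$ smoothing of (i).

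\textbf{The master energy inequality.} Multiply the equation by $u^{q-1}$ and integrate; the substitution $v=u^{(q+p-2)/p}$ turns $\int u^{q-2}|\nabla u|^p\,d\mu$ into a multiple of $\|\nabla v\|_{L^p}^p$, so the Sobolev inequality \eqref{S} yields
\[
\int_M u^{q-2}|\nabla u|^p\,d\mu\ \ge\ \frac{p^p C_{s,p}^p}{(q+p-2)^p}\,\|u\|_{L^{r_q}}^{q+p-2},\qquad r_q:=\frac{(q+p-2)N}{N-p}.
\]
A short Hölder interpolation between $L^{\s_0}$ and $L^{r_q}$ gives
\[
\int_M u^{q-1+\s}\,d\mu\ \le\ \|u\|_{L^{\s_0}}^{\s-p+1}\,\|u\|_{L^{r_q}}^{q+p-2};
\]
the definition $\s_0:=(\s-p+1)N/p$ is precisely what forces the two $L^{r_q}$ exponents to match. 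Combining these ingredients produces the master inequality
\[
\frac{1}{q}\frac{d}{dt}\|u\|_{L^q}^q\ \le\ -\bigl[(q-1)-K_q\,\|u(t)\|_{L^{\s_0}}^{\s-p+1}\bigr]\int_M u^{q-2}|\nabla u|^p\,d\mu,
\]
with $K_q$ explicit in $p,N,q,C_{s,p}$.

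\textbf{Parts (iii) and (ii).} For (iii), pick the smallness threshold $\varepsilon$ so that $K_q\varepsilon^{\s-p+1}<q-1$ holds for both the prescribed $q$ and $q=\s_0$. Applying the master inequality at $q=\s_0$ gives local non-increase of $\|u(\cdot)\|_{L^{\s_0}}$; a continuity/bootstrap argument extends this globally, and returning to the prescribed $q$ yields $\|u(t)\|_{L^q}\le\|u_0\|_{L^q}$. For (ii), once $\|u(t)\|_{L^{\s_0}}$ is globally controlled, a fixed fraction of the diffusion term can be absorbed, leaving $\tfrac{d}{dt}\|u\|_{L^q}^q\le -c_q\|u\|_{L^{r_q}}^{q+p-2}$. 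Eliminating $\|u\|_{L^{r_q}}$ via the interpolation $\|u\|_{L^q}\le\|u\|_{L^{\s_0}}^{\theta}\|u\|_{L^{r_q}}^{1-\theta}$ (valid since $\s_0\le q\le r_q$ under our constraints) reduces matters to a scalar ODE $y'\le -Cy^{1+\kappa}$ with $y=\|u\|_{L^q}^q$; integrating and computing $\theta,\kappa$ from the scaling identities produces exactly the $\gamma_q,\delta_q$ of \eqref{eq23}.

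\textbf{Part (i) and main obstacle.} The interpolation $\|u_0\|_{L^{\s_0}}\le\|u_0\|_{L^1}^{\theta_0}\|u_0\|_{L^s}^{1-\theta_0}$ transfers smallness from $L^1\cap L^s$ to $L^{\s_0}$, so (ii)--(iii) become available. Applying (ii) repeatedly along a geometric sequence $q_n\to\infty$ on nested shrinking time slabs $(t(1-2^{-n}),t(1-2^{-(n+1)}))$ telescopes the constants and decay exponents into an $L^{\s_0}$--$L^\infty$ smoothing bound. The main technical difficulty is then converting this into the $L^1$--$L^\infty$ form \eqref{eq21tot} with the sharp Barenblatt exponent $p/(N(p-2)+p)$: this needs combining the iteration output with an evolution estimate for $\|u(t)\|_{L^1}$ derived from the $q=1$ form of the master inequality, closed by a continuity argument in the spirit of \cite{MT1}. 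The Euclidean proof in \cite{MT1} uses the explicit weight $\rho$ to organise the bookkeeping of constants along the iteration; here the only structural input is the abstract Sobolev constant $C_{s,p}$, so the dependence of $K_{q_n}$ on $q_n$ and the convergence of the resulting series must be tracked purely in terms of $p,N,\s$ and $C_{s,p}$, which is the most delicate part of the argument.
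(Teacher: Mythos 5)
Your master energy inequality and the way you run parts (iii) and (ii) match the paper's Lemma \ref{lemma41} and Proposition \ref{prop42} (the paper uses a discrete Moser iteration over time slabs rather than your scalar ODE closure for (ii), but these are interchangeable and yield the same exponents $\gamma_q,\delta_q$). The genuine gap is in part (i). The coefficient you must beat in the master inequality to absorb the reaction at level $q$ is $(q-1)\left(\frac{p}{p+q-2}\right)^pC_{s,p}^p$, which behaves like $p^pC_{s,p}^p\,q^{1-p}\to 0$ as $q\to\infty$ since $p>1$ by \eqref{costanti}. Hence the smallness threshold $\hat\varepsilon_0(q)$ in your part (ii) necessarily degenerates to zero as $q\to\infty$ — the paper says this explicitly right after Proposition \ref{prop42} — and your proposed iteration "along a geometric sequence $q_n\to\infty$," with the reaction absorbed by $L^{\s_0}$-smallness at every level, cannot close: for any fixed small datum the bracket turns negative beyond some finite $q^*$, and the iteration stalls. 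The main obstruction is therefore not, as you suggest, converting an already-obtained $L^{\s_0}$–$L^\infty$ bound into $L^1$–$L^\infty$ form; it is that you never reach $L^\infty$ by this route.

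The paper gets $L^\infty$ through a De Giorgi level-set argument (Section \ref{aux}). One works with the truncations $G_k(u)=u-T_k(u)$ and the bootstrap functional $S(t):=\sup_{0<\tau<t}\tau\|u(\tau)\|^{\s-1}_{L^\infty(B_R)}$. In Lemma \ref{lemma1} the reaction term $\iint u^{\s}[G_{k_i}(u)]^{q-1}$ is estimated by writing $u^{\s}=u^{\s-1}\cdot u$ and using $\dfrac{u}{k_i}\chi_{\{u\ge k_i\}}\le\dfrac{u-k_{i+1}}{k_i-k_{i+1}}\chi_{\{u\ge k_i\}}$, yielding a bound of the form $\tilde\gamma\,\dfrac{k_i}{k_i-k_{i+1}}\,\dfrac{S(t)}{\theta_0}\iint[G_{k_{i+1}}(u)]^{q}$. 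Thus the reaction is controlled by $S(t)$ and the level-set geometry, not by $L^{\s_0}$-smallness at each exponent, so nothing degenerates as one iterates the levels. Proposition \ref{lemma3} then converts the hypothesis $S(t)\le 1$ into an $L^r$–$L^\infty$ smoothing estimate for fixed $r\ge1$; choosing $r=1$ gives the Barenblatt exponent in \eqref{eq21tot}. Finally, $S(t)\le 1$ for all $t$ (Lemma \ref{lemma4}) is obtained by contradiction: if $S(T)=1$ at a finite $T$, feeding Proposition \ref{lemma3} (with a fixed $r=q>\s_0$) back into the $L^q$ decay from Proposition \ref{prop42} gives $1=S(T)\lesssim\|u_0\|_{L^{\s_0}}^{\s-p+1}$, contradicting the smallness. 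The smallness of the data is spent once, at a single fixed exponent, precisely to avoid the unbounded-exponent degeneracy your approach runs into.
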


\begin{remark}\label{rem2}\rm
Observe that the choice of $\varepsilon_0$ in \eqref{epsilon0} is made in Lemma \ref{lemma4}. Moreover, the proof of the above theorem will show that one can take an explicit value of $\hat \varepsilon_0$ in \eqref{eps3a} and $\varepsilon$ in \eqref{eps2}. In fact, let $q_0>1$ be fixed and $\{q_n\}_{n\in\mathbb{N}}$ be the sequence defined by: $$q_n=\frac{N}{N-p}(p+q_{n-1}-2), \quad \text{for all}\,\,n\in\mathbb{N},$$
so that
\begin{equation}\label{eq400}
q_n=\left(\frac{N}{N-p}\right)^{n}q_0+\frac{N}{N-p}(p-2) \sum_{i=0}^{n-1} \left(\frac{N}{N-p}\right)^i.
\end{equation}
Clearly, $\{q_n\}$ is increasing and $q_n \longrightarrow +\infty$ as $n\to +\infty.$ Fix $q\in[q_0,+\infty)$ and let $\bar n$ be the first index such that $q_{\bar n}\ge q$. Define
\begin{equation}\label{eq40}
\tilde\varepsilon_0=\tilde\varepsilon_0(\s,p,N,C_{s,p},q,q_0):=\left[\min \left\{\min_{n=0,...,\bar n}\left(\frac{p( q_n-1)^{1/p}}{p+q_n-2}\right)^p;\left(\frac{p(\s_0-1)^{1/p}}{p-\s_0-2}\right)^p\right\}\frac{C_{s,p}^p}{2}\right]^{\frac{1}{\s-p+1}}.
\end{equation}
Observe that $\tilde\varepsilon_0$ in \eqref{eq40} depends on the value $q$ through the sequence $\{q_n\}$. More precisely, $\bar n$ is increasing with respect to $q$, while the quantity $\min_{n=0,...,\bar n}( q_n-1)\left(\frac{p}{p+q_n-2}\right)^p\frac{C_{s,p}^p}{2}$ decreases w.r.t. $q$. 

\noindent Then, in \eqref{eps3a} we can take
\begin{equation*}\label{epsilon0aaa}
\hat \varepsilon_0=\hat\varepsilon_0(\s, p, N, C_{s,p}, q)=\tilde\varepsilon_0(\s, p, N, C_{s,p}, q, \s_0)\,.
\end{equation*}

\noindent Similarly, in \eqref{eps2}, we can take
\begin{equation}\label{epsilon000}
\varepsilon= \bar\varepsilon_0\wedge \hat\varepsilon_0,
\end{equation}
where
\[\bar\varepsilon_0=\bar\varepsilon_0(\s, p, C_{s,p}, q):=\left[\min \left\{\left(\frac{p( q-1)^{1/p}}{p+q-2}\right)^pC_{s,p}^p;\,\,\left(\frac{p(\s_0-1)^{1/p}}{p-\s_0-2}\right)^pC_{s,p}^p\right\}\right]^{\frac1{\s-p+1}}\,.\]
\end{remark}
\smallskip

The next result involves a similar smoothing effect for a different class of data. Such result seems to be new also in the Euclidean setting.

\begin{theorem}\label{teo11}
Let $M$ be a complete, noncompact, Riemannian manifold of infinite volume such that the Sobolev inequality \eqref{S} holds. Assume \eqref{costanti} and, besides, that $\s>p-1+\frac pN$ and $u_0\in{\textrm L}^{\s_0}(M)$, $u_0\ge0$, with $\s_0$ as in \eqref{sig0}. Assume that
\begin{equation}\label{epsilon000}
\|u_0\|_{\textrm L^{\s_0}(M)}\,<\,\varepsilon_2\,,
\end{equation}
with $\varepsilon_2=\varepsilon_2(\s,p,N, C_{s,p},q)>0$ sufficiently small. Then problem \eqref{problema} admits a solution for any $T>0$,  in the sense of Definition \ref{def21}. Moreover, for any $\tau>0,$ one has $u\in L^{\infty}(M\times(\tau,+\infty))$ and for any $q>\s_0$, there exists a constant $\Gamma>0$ such that, one has
\begin{equation}\label{eq21totold}
\|u(t)\|_{L^{\infty}(M)}\le \Gamma\, t^{-\frac1{\s-1}}\,\|u_0\|_{L^{\s_0}(M)}^{\frac{\s-p+1}{\s-1}}\, \quad \text{for all $t>0$.}
\end{equation}
Moreover, (ii) and (iii) of Theorem \ref{teo1} hold.
\end{theorem}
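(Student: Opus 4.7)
The plan is to produce $u$ as a limit of solutions on an exhausting sequence $B_R \nearrow M$ of relatively compact sets, with the key a priori bound furnished by Moser iteration in the Sobolev inequality \eqref{S}, propagated by the smallness of $\|u_0\|_{L^{\sigma_0}(M)}$. For each $R$, let $u_R$ denote the solution of \eqref{problema} on $B_R$ with zero Dirichlet boundary data and initial datum $u_0\chi_{B_R}$; local existence is classical, and the a priori bounds below extend it globally and yield the compactness needed to pass to the limit.

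The technical core is the family of energy inequalities obtained by testing against $u_R^{q-1}$ for $q \ge \sigma_0$. With $m := (q+p-2)/p$, integration by parts and the chain rule give
\begin{equation*}
\frac{1}{q}\frac{d}{dt}\|u_R\|_{L^q}^q + (q-1)\left(\frac{p}{q+p-2}\right)^p \|\nabla u_R^{m}\|_{L^p}^p = \int_M u_R^{\sigma+q-1}\,d\mu.
\end{equation*}
Splitting $u_R^{\sigma+q-1} = u_R^{\sigma-p+1}\cdot u_R^{q+p-2}$, Hölder with exponents $N/p$ and $N/(N-p)$ followed by the Sobolev inequality \eqref{S} bounds the reaction integral by $\|u_R\|_{L^{\sigma_0}}^{\sigma-p+1}\, C_{s,p}^{-p} \,\|\nabla u_R^{m}\|_{L^p}^p$. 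Choosing $\varepsilon_2$ small enough allows absorbing the reaction into the dissipation, giving
\begin{equation*}
\frac{d}{dt}\|u_R\|_{L^q}^q \leq -\kappa_q \|u_R\|_{L^{q_+}}^{q+p-2},\qquad q_+ := \frac{N(q+p-2)}{N-p}.
\end{equation*}
Taking $q=\sigma_0$ first shows that $\|u_R(t)\|_{L^{\sigma_0}}$ is non-increasing, so the hypothesis \eqref{epsilon000} propagates for all $t \ge 0$; this is the mechanism behind statements (ii) and (iii) of Theorem \ref{teo1}, which I would cite for the $L^{\sigma_0}$--$L^q$ and $L^q$--$L^q$ bounds, and which also yield the (ii)--(iii) conclusion asserted at the end of Theorem \ref{teo11}.

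To upgrade to $L^\infty$ I would iterate along the sequence $\{q_n\}$ of \eqref{eq400} starting at $q_0=\sigma_0$, using a shrinking geometric time partition $\tau_n \nearrow t$. Integrating the closed differential inequality on $[\tau_{n-1},\tau_n]$ and invoking a Hölder interpolation to recast $\|u_R\|_{L^{q_+}}^{q+p-2}$ in terms of $\|u_R\|_{L^{q_n}}^{q_n}$ yields a recursion
\begin{equation*}
\|u_R(\tau_n)\|_{L^{q_n}} \leq A_n (\tau_n-\tau_{n-1})^{-\alpha_n}\|u_R(\tau_{n-1})\|_{L^{q_{n-1}}}^{\beta_n}.
\end{equation*}
The geometric growth $q_n \sim (N/(N-p))^n \sigma_0$ makes the telescoping product converge, and a direct calculation identifies the limiting exponents with $1/(\sigma-1)$ for the time factor and $(\sigma-p+1)/(\sigma-1)$ for the initial-datum factor, producing \eqref{eq21totold}. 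Passage to the limit $R \to \infty$ is then achieved by standard local parabolic compactness for the $p$-Laplacian evolution (Aubin--Lions type arguments together with uniform local $L^p$ bounds on $\nabla u_R$). The main obstacle is the bookkeeping of constants: the absorption threshold $\kappa_q$ degrades as $q \to \infty$, so at late iteration steps one must combine the $L^{\sigma_0}$-smallness with the time-decaying $L^{q_{n-1}}$ control inherited from the previous step, and verify that the resulting combination still closes with the precise exponents appearing in \eqref{eq21totold}.
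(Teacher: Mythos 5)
Your setup (testing with $u_R^{q-1}$, Sobolev plus H\"older on $u_R^{\sigma+q-1}=u_R^{\sigma-p+1}u_R^{p+q-2}$, absorption of the reaction thanks to the smallness of $\|u(t)\|_{L^{\sigma_0}}$, monotonicity of $t\mapsto\|u_R(t)\|_{L^q}$, and the $L^{\sigma_0}$--$L^q$ smoothing along the sequence \eqref{eq400}) reproduces exactly the paper's Lemma \ref{lemma41} and Proposition \ref{prop42}, and your exponent bookkeeping is right: composing the $L^{\sigma_0}$--$L^q$ rate with an $L^q$--$L^\infty$ bound does give $t^{-1/(\sigma-1)}$ and $\|u_0\|_{L^{\sigma_0}}^{(\sigma-p+1)/(\sigma-1)}$, since $N(p-2)+p\sigma_0=N(\sigma-1)$. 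The gap is in the step you yourself flag as ``bookkeeping of constants'': running the Moser iteration all the way to $L^\infty$ with absorption of the reaction at every level cannot close. At level $q_n$ the dissipation coefficient $(q_n-1)\bigl(\tfrac{p}{p+q_n-2}\bigr)^pC_{s,p}^p$ tends to $0$ as $n\to\infty$, while the reaction coefficient is $\|u(t)\|_{L^{\sigma_0}}^{\sigma-p+1}$, which is only known to be nonincreasing, not decaying; so for any fixed $\varepsilon_2>0$ the absorption fails beyond some finite level. This is precisely the content of the remark following Proposition \ref{prop42}: one cannot let $q\to+\infty$ in that bound, because the admissible smallness threshold \eqref{eq40} degenerates to $0$. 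Your suggested repair --- feeding in the time-decaying $L^{q_{n-1}}$ information --- does not address this, since the obstruction sits in the coefficient $\|u\|_{L^{\sigma_0}}^{\sigma-p+1}$ multiplying the same norm $\|u\|_{L^{(p+q_n-2)N/(N-p)}}^{p+q_n-2}$ as the dissipation, not in the size of the iterated norms.

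What the paper does instead, and what is missing from your plan, is a different mechanism for the reaction at high integrability levels: write $u^{\sigma}\le \|u(\tau)\|_{L^\infty}^{\sigma-1}u$ and control it through the quantity $S(t)=\sup_{0<\tau<t}\tau\|u(\tau)\|_{L^\infty(B_R)}^{\sigma-1}$ of \eqref{eq34}, inside a De Giorgi-type iteration on the truncations $G_{k_i}(u)$ over shrinking cylinders (Lemmas \ref{lemma1}, \ref{lemma2} and Proposition \ref{lemma3}), which yields $\|u\|_{L^\infty(B_R\times(t/2,t))}\lesssim t^{-N/(N(p-2)+pq)}\bigl[\sup_{t/4<\tau<t}\|u(\tau)\|_{L^q}^q\bigr]^{p/(N(p-2)+pq)}$ \emph{conditionally on} $S(t)\le1$. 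The condition $S(t)\le 1$ is then propagated for all times by a continuity/contradiction bootstrap (Lemma \ref{lemma4old}): if $S(T)=1$ at some finite $T$, inserting the conditional $L^\infty$ bound and the $L^{\sigma_0}$--$L^q$ smoothing of Proposition \ref{prop42} with $q_0=\sigma_0$ shows $1=S(T)\le C\tilde C\,\varepsilon_2^{\sigma-p+1}<1$ once $\varepsilon_2$ in \eqref{epsilon000} is small, a contradiction. Only after this does Proposition \ref{prop3old} deliver \eqref{eq21totold}. Without this (or an equivalent) self-improvement argument handling the reaction at large $q$, your iteration does not close, so the proposal as written has a genuine gap at its central step.
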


\begin{remark}
We comment that, as in Remark \ref{rem2}, one can choose an explicit value for $\varepsilon_2$ in \eqref{epsilon000}. In fact, let $q_0=\s_0$ in \eqref{eq40}. It can be shown that one can take, with this choice of $q_0$:
$$
\varepsilon_2=\varepsilon_2(\s,p,N, C_{s,p},q):=\min\left\{\tilde \varepsilon_0(\s, p, N, C_{s,p}, q, \s_0)\,;\left(\frac{1}{C\,\tilde C}\right)^{\frac 1{\s-p+1}}\right\}\,,
$$
where $C>0$ and $\tilde C>0$ are defined in Proposition \ref{prop42} and Lemma \ref{lemma3}, respectively.
\end{remark}

\begin{remark}
Observe that, due to the assumption $\s>p-1+\frac pN$, one has
$$
\frac 1{\s-1}<\frac{N}{N(p-2)+p}.
$$
Hence, for large times, the decay given by Theorem \ref{teo11} is worse than the one of Theorem \ref{teo1}; however, in this regards, note that the assumptions on the initial datum $u_0$ are different in the two theorems. On the other hand, estimates \eqref{eq21totold} and \eqref{eq21tot}, are not sharp in general for small times. For example, when $u_0\in L^{\infty}(M)$, $u(t)$ remains bounded for any $t\in[0,T)$, where $T$ is the maximal existence time.
\end{remark}

\medskip

In the next theorem, we address the case $\s>p-1$, assuming that both the inequalities \eqref{S} and \eqref{P} hold on $M$, hence with stronger assumptions on the manifold considered. This has of course no Euclidean analogue, as the noncompactness of the manifold considered, as well as the fact that it has infinite volume, makes the situation not comparable to the case of a bounded Euclidean domain.

\begin{theorem}\label{teo71}
Let $M$ be a complete, noncompact manifold of infinite volume such that the Sobolev inequality \eqref{S} and the Poincaré inequality \eqref{P} hold. Assume that \eqref{costanti} holds, and besides that $p>2$. Let $u_0\ge0$ be such that $u_0\in{\textrm L}^{s}(M)\cap {\textrm L}^{\s\frac Np}(M)$, for some $s>\max\left\{\s_0,1\right\}$ and $q_0>1$. Assume also that
\begin{equation*}\label{epsilon1}
\|u_0\|_{\textrm L^{s}(M)}\,<\,\varepsilon_1, \quad \|u_0\|_{\textrm L^{\s\frac Np}(M)}\,<\,\varepsilon_1,
\end{equation*}
with $\varepsilon_1=\varepsilon_1(\s,p,N, C_{s,p}, C_p,s)$ sufficiently small. Then problem \eqref{problema} admits a solution for any $T>0$,  in the sense of Definition \ref{def21}. Moreover, for any $\tau>0,$ one has $u\in L^{\infty}(M\times(\tau,+\infty))$ and, for any $q>s$, there exists a constant $\Gamma>0$ such that, one has
\begin{equation}\label{eq801}
\|u(t)\|_{L^{\infty}(B_R)}\le \Gamma\, t^{-\beta_{q,s}}\,\|u_0\|_{L^{s}(B_R)}^{\frac{ps}{N(p-2)+pq}}\,\quad \text{for all $t>0$,}
\end{equation}
where
\begin{equation*}\label{beta}
\beta_{q,s}:=\frac{1}{p-2}\left(1-\frac{ps}{N(p-2)+pq}\right)>0\,.
\end{equation*}
Moreover, let  $s\le q<\infty$ 
and
\begin{equation*}\label{eps3b}
\|u_0\|_{L^{s}(M)}< \hat \varepsilon_1
\end{equation*}
for $\hat\varepsilon_1=\hat\varepsilon_1(\s, p , N, C_{s,p},C_p, q,s)$ small enough. Then there exists a constant $C=C(\s,p,N,\varepsilon_1,C_{s,p},C_p, q,s)>0$ such that
\begin{equation}\label{eq23b}
\|u(t)\|_{L^q(M)}\le C\,t^{-\gamma_q} \|u_{0}\|^{\delta_q}_{L^{s}(M)}\quad \textrm{for all }\,\, t>0\,,
\end{equation}
where
$$
\gamma_q=\frac{s}{p-2}\left[\frac 1s-\frac 1q\right],\quad \delta_q=\frac sq\,.
$$
Finally, for any $1<q<\infty$, if $u_0\in {\textrm L}^q(M)\cap\textrm L^{s}(M)$ and
\begin{equation*}\label{eps2b}
\|u_0\|_{\textrm L^{s}(M)}\,<\,\varepsilon
\end{equation*}
with $\varepsilon=\varepsilon(\s,p,N, C_{s,p},C_p,q)$ sufficiently small, then
\begin{equation}\label{eq22b}
\|u(t)\|_{L^q(M)}\le  \|u_{0}\|_{L^q(M)}\quad \textrm{for all }\,\, t>0\,.
\end{equation}
\end{theorem}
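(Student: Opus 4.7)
My strategy would be to follow the overall scheme of \cite[Theorem 2]{MT1}: construct the solution as the limit $R\to\infty$ of the Dirichlet problems on balls $B_R\subset M$ and derive $R$-uniform a priori bounds strong enough both to pass to the limit (via Aubin--Lions compactness) and to yield the claimed smoothing estimates. The role played in \cite{MT1} by the heavy weight $\rho(x)=(1+|x|)^{-l}$ with $l\ge p$ is played here by the (unweighted) Poincar\'e inequality \eqref{P} on $M$, so the whole problem reduces to proving, for the approximating solutions, uniform versions of \eqref{eq22b}, \eqref{eq23b}, \eqref{eq801}.

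The core computation is an energy identity obtained by testing the equation against $u^{q-1}$; setting $v=u^{(q+p-2)/p}$, it reads
\[
\frac{1}{q}\frac{d}{dt}\|u\|_{L^q(M)}^q + (q-1)\Bigl(\frac{p}{q+p-2}\Bigr)^p\|\nabla v\|_{L^p(M)}^p = \int_M u^{\sigma+q-1}\,d\mu.
\]
I would then estimate the reaction term by H\"older's inequality with the conjugate pair $\bigl(s/(\sigma-p+1),\,s/(s-\sigma+p-1)\bigr)$, legitimate because $s>\sigma_0$ forces $s>\sigma-p+1$ and $\sigma>p-1$:
\[
\int_M u^{\sigma+q-1}\,d\mu \le \|u\|_{L^s(M)}^{\sigma-p+1}\,\|v\|_{L^\beta(M)}^p, \qquad \beta:=\frac{ps}{s-\sigma+p-1}.
\]
The hypotheses $\sigma>p-1$ and $s>\sigma_0$ place $\beta$ strictly inside $(p,p^*)$. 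Interpolating $L^\beta$ between $L^p$ and $L^{p^*}$ and then applying \eqref{P} and \eqref{S} respectively yields $\|v\|_{L^\beta(M)}\le K\|\nabla v\|_{L^p(M)}$ with $K=K(p,s,\sigma,N,C_p,C_{s,p})$ explicit. Plugging back produces
\[
\frac{d}{dt}\|u\|_{L^q(M)}^q \le -q\Bigl[(q-1)\Bigl(\frac{p}{q+p-2}\Bigr)^p - K^p\|u\|_{L^s(M)}^{\sigma-p+1}\Bigr]\|\nabla v\|_{L^p(M)}^p,
\]
so a standard continuity (bootstrap) argument in $\varepsilon_1$, applied first with $q=s$, keeps the bracket above half of its initial value for all $t>0$. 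This simultaneously propagates the smallness of $\|u(t)\|_{L^s(M)}$ and, applied to arbitrary $q$ with $u_0\in L^q(M)$, yields the $L^q$--contractivity \eqref{eq22b}.

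For the smoothing estimates \eqref{eq23b} and \eqref{eq801} I would feed the resulting coercive inequality into a Moser iteration along $q_{n+1}=\tfrac{N}{N-p}(q_n+p-2)$, $q_0=s$. Applying the Sobolev inequality to $v_n=u^{(q_n+p-2)/p}$ converts the differential inequality into
\[
\frac{d}{dt}\|u\|_{L^{q_n}(M)}^{q_n} \le -c_n\,C_{s,p}^p\,\|u\|_{L^{q_{n+1}}(M)}^{q_n+p-2},
\]
and integrating on dyadic intervals $[t_n,t_{n+1}]$ with $t_n=t(1-2^{-n})$, while exploiting the monotonicity of $\tau\mapsto\|u(\tau)\|_{L^{q_{n+1}}(M)}$ (this is where the $L^{q_{n+1}}$--contractivity just proved is crucial), one obtains the recursion
\[
\|u(t_{n+1})\|_{L^{q_{n+1}}(M)}^{q_n+p-2} \le \frac{\|u(t_n)\|_{L^{q_n}(M)}^{q_n}}{c_n(t_{n+1}-t_n)}.
\]
Stopping at a finite stage $\bar n$ with $q_{\bar n}\ge q$ yields \eqref{eq23b}; passing to the limit $n\to\infty$ and using the elementary inequality $\liminf_{n}\|u(t_n)\|_{L^{q_n}(M)}\ge\|u(t)\|_{L^\infty(M)}$ gives \eqref{eq801}.

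The main obstacle will be the careful bookkeeping of the constants $c_n$ (depending on $q_n$ and on the interpolation constant $K(\beta_n)$) through the iteration, so that the product of the recursive bounds telescopes into the precise exponents $\gamma_q=\frac{s}{p-2}\bigl(\frac1s-\frac1q\bigr)$, $\delta_q=\frac{s}{q}$ and $\beta_{q,s}=\frac{1}{p-2}\bigl(1-\frac{ps}{N(p-2)+pq}\bigr)$. In \cite{MT1} the explicit form of $\rho$ simplifies this computation, whereas here it has to be carried out from scratch using only the abstract inequalities \eqref{S} and \eqref{P}. The extra integrability hypothesis $u_0\in L^{\sigma N/p}(M)$ enters at the level of the approximating problems on $B_R$, playing the auxiliary role which the explicit weight $\rho$ plays in \cite{MT1} to guarantee the global existence needed before the smoothing machinery can be deployed.
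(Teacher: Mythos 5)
Your $L^q$-contraction step is a genuinely different (and in some respects cleaner) decomposition than the paper's. You estimate the reaction term by H\"older directly against $L^s$, giving
\[
\int_M u^{\s+q-1}\,d\mu\le\|u\|_{L^s}^{\s-p+1}\,\|v\|_{L^{ps/(s-\s+p-1)}}^p,\qquad v=u^{(p+q-2)/p},
\]
and then interpolate $L^{\beta}$ (with $\beta=ps/(s-\s+p-1)\in(p,p^*)$ exactly because $\s>p-1$ and $s>\s_0$) between the Poincar\'e endpoint $L^p$ and the Sobolev endpoint $L^{p^*}$. The paper's Lemma \ref{lemma71} instead splits the gradient energy into $c_1\|\nabla v\|_p^p+c_2\|\nabla v\|_p^p$, applies Poincar\'e on the $c_1$ piece, and uses a power $\alpha=(p-1)/\s$ on the $c_2$ piece to match a two-step interpolation--H\"older--Sobolev bound for $\int u^{\s+q-1}$ against the $L^{\s N/p}$ norm. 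Your variant avoids the $(c_1,c_2,\alpha)$ bookkeeping and shifts the smallness hypothesis from $\|u_0\|_{L^{\s N/p}}$ to $\|u_0\|_{L^s}$; both smallness conditions are available in the hypotheses, so this is a legitimate alternative for the contraction part.

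However, the Moser iteration you propose does not prove Theorem \ref{teo71}. You iterate along $q_{n+1}=\frac{N}{N-p}(q_n+p-2)$, the Sobolev chain, which is precisely the scheme of Proposition \ref{prop42} (used in Theorems \ref{teo1} and \ref{teo11}). That chain yields the exponents $\gamma_q=\bigl(\tfrac1{q_0}-\tfrac1q\bigr)\tfrac{Nq_0}{pq_0+N(p-2)}$ and $\delta_q=\tfrac{q_0}{q}\cdot\tfrac{q+\frac{N}{p}(p-2)}{q_0+\frac{N}{p}(p-2)}$, \emph{not} the ones claimed in \eqref{eq23b}, namely $\gamma_q=\tfrac{s}{p-2}\bigl(\tfrac1s-\tfrac1q\bigr)$ and $\delta_q=\tfrac{s}{q}$. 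Concretely, plugging your $L^q$ rate into Proposition \ref{lemma3} with $r=q$ gives the $L^\infty$ decay $t^{-N/(N(p-2)+ps)}$, which is independent of $q$ and strictly \emph{smaller} than $\beta_{q,s}=\tfrac{1}{p-2}\bigl(1-\tfrac{ps}{N(p-2)+pq}\bigr)$ for every $q>s$ (the difference of the two rates equals $\tfrac{p^2s(q-s)}{(p-2)(N(p-2)+pq)(N(p-2)+ps)}>0$). To obtain the rates stated in Theorem \ref{teo71} one must run the \emph{Poincar\'e} chain $q_m=q_{m-1}+(p-2)$, using $\|\nabla v\|_{L^p}^p\ge C_p^p\|u\|_{L^{p+q_{m-1}-2}}^{p+q_{m-1}-2}$ at each step rather than the Sobolev lower bound; this is exactly Proposition \ref{prop71}. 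This is where the assumption $p>2$ (so that $q_m$ is strictly increasing) enters, a point your iteration never uses.

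In addition, you do not address the key auxiliary step of the paper, namely showing that $S(t)=\sup_{0<\tau<t}\tau\|u(\tau)\|_{L^\infty}^{\s-1}\le1$ for \emph{all} $t>0$ (Lemma \ref{lemma6}). This is not automatic from the smoothing bound: the argument in the paper splits the supremum into $t<1$ (controlled via Lemma \ref{lemma3} with $r=s$ and the $L^s$--$L^s$ contraction) and $t>1$ (controlled via Lemma \ref{lemma3} with $r=q>s$ and the $L^s$--$L^q$ smoothing, whose \emph{negative} time exponent $b<0$ is essential), and the hypothesis $s>\max\{\s_0,1\}$ is used precisely to make the small-time exponent $a>0$ there. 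Without some version of this, the conclusion of Proposition \ref{lemma3} (and hence the $L^\infty$ bound) is not available on the whole time axis. Finally, your remark that the hypothesis $u_0\in L^{\s N/p}(M)$ serves only ``for the approximating problems on $B_R$'' is off: the truncated Dirichlet problems \eqref{5} are always globally solvable, and in the paper that hypothesis is used exactly in Lemma \ref{lemma71}; in your variant it would simply become redundant, which would actually be an improvement if the rest of the proof were repaired along the Poincar\'e iteration.
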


\begin{remark}\label{rem1}
It is again possible to give an explicit estimate on the smallness parameter $\varepsilon_1$ above. In fact, let $q_0>1$ be fixed and $\{q_m\}_{m\in\mathbb{N}}$ be the sequence defined by: $$q_m=p+q_{m-1}-2, \quad \text{for all}\,\,m\in\mathbb{N},$$
so that
\begin{equation}\label{eq70}
q_m=q_0+m(p-2).
\end{equation}
Clearly, $\{q_m\}$ is increasing and $q_m \longrightarrow +\infty$ as $m\to +\infty.$ Fix $q\in[q_0,+\infty)$ and let $\bar m$ be the first index such that $q_{\bar m}\ge q$. Define $\tilde\varepsilon_1=\tilde\varepsilon_1(\s,p,N,C_{s,p},C_p,q,q_0)$ such that
\begin{equation*}\label{eq71}
\begin{aligned}
\tilde\varepsilon_1:=\min &\left\{\left[\min_{m=0,...,\bar m}\left(\frac{p( q_m-1)^{1/p}}{p+q_m-2}\right)^pC\right]^{\frac{\s+p+q_m-2}{\s(\s+q_m-1)-p(p+q_m-2)}};\right.\\
&\quad\left.\left[\left(\frac{p( \s\frac N{p}-1)^{1/p}}{(p+\s\frac{N}{p}-2)}\right)^pC\right]^{\frac{\s+p+\s\frac N{p}-2}{\s(\s+\s\frac N{p}-1)-p(p+\s\frac N{p}-2)}}\right\}
\end{aligned}
\end{equation*}
where $C=\tilde C\,C_p^{p(\frac{p-1}{\s})}$ and $\tilde C=\tilde C(C_s,p,\s,q)>0$ is defined in \eqref{eq712}. Observe that $\tilde\varepsilon_1$ depends on $q$ through the sequence $\{q_m\}$. More precisely, $\bar m$ is increasing with respect to $q$, while the quantity $\min_{m=0,...,\bar m}\left(\frac{p( q_m-1)^{1/p}}{p+q_m-2}\right)^pC$ decreases w.r.t. $q_m$. Furthermore, let $\delta_1>0$ be such that
$$
\tilde C\, \delta_1^{\frac{ps(\s-1)}{N(p-2)+ps}}\,+\,\frac{C\,\tilde C}{4}\delta_1^{\frac{ps(\s-1)}{N(p-2)+pq}}<1\,,
$$
where $C>0$ and $\tilde C>0$ are defined in Proposition \ref{prop42} and Lemma \ref{lemma3}, respectively. Then, let $q_0=s$ with $s$ as in Theorem \ref{teo71} and define
\[\varepsilon_1=\varepsilon_1(\s, p, N, C_{s,p}, C_p, q, s)=\min\left\{\tilde \varepsilon_1(\s, p, N, C_{s,p}, C_p, q, s)\,;\delta_1\right\}\,.\]
\end{remark}

\section{$L^q-L^q$ and $L^{q_0}-L^{q}$ estimates}\label{Lq}

Let $x_0,x \in M$. We denote by $r(x)=\textrm{dist}\,(x_0,x)$ the Riemannian distance between $x_0$ and $x$. Moreover, we let $B_R(x_0):=\{x\in M\,: \textrm{dist}\,(x_0,x)<R\}$ be the geodesic ball with centre $x_0 \in M$ and radius $R > 0$. If a reference point $x_0\in M$ is fixed, we shall simply denote by $B_R$ the ball with centre $x_0$ and radius $R$. We also recall that $\mu$ denotes the Riemannian measure on $M$.

\smallskip

For any given function $v$, we define for any $k\in\R^+$
\begin{equation}\label{31}
\begin{aligned}
&T_k(v):=\begin{cases} k\quad &\text{if}\,\,\, v\ge k\,, \\ v \quad &\text{if}\,\,\, |v|< k\,, \\ -k\quad &\text{if}\,\,\, v\le -k\,;\end{cases}.
\end{aligned}
\end{equation}
For every $R>0$, $k>0,$ consider the problem
\begin{equation}\label{eq111}
\begin{cases}
 u_t=\dive \left(|\nabla u|^{p-2}\nabla u \right)+\, T_k(u^{\s}) \quad&\text{in}\,\, B_R\times (0,+\infty) \\
u=0 &\text{in}\,\, \partial B_R\times (0,+\infty)\\
u=u_0 &\text{in}\,\, B_R\times \{0\}, \\
\end{cases}
\end{equation}
where $u_0\in L^\infty(B_R)$, $u_0\geq 0$.
Solutions to problem \eqref{eq111} are meant in the weak sense as follows.

\begin{definition}\label{def31}
Let $p>1$ and $\s>p-1$. Let $u_0\in L^\infty(B_R)$, $u_0\geq 0$. We say that a nonnegative function $u$ is a solution to problem \eqref{eq111} if
$$
u\in L^{\infty}(B_R\times(0,+\infty)), \,\,\, u\in L^2\big((0, T); W^{1,p}_0(B_R)\big) \quad\quad \textrm{ for any }\, T>0,
$$
and for any $T>0$, $\varphi \in C_c^{\infty}(B_R\times[0,T])$ such that $\varphi(x,T)=0$ for every $x\in B_R$, $u$ satisfies the equality:
\begin{equation*}
\begin{aligned}
-\int_0^T\int_{B_R} \,u\,\varphi_t\,d\mu\,dt =&- \int_0^T\int_{B_R}  |\nabla u|^{p-2} \langle\nabla u, \nabla \varphi \rangle \,d\mu\,dt\,+ \int_0^T\int_{B_R} \,T_k(u^\sigma)\,\varphi\,d\mu\,dt \\
& +\int_{B_R} \,u_0(x)\,\varphi(x,0)\,d\mu.
\end{aligned}
\end{equation*}
\end{definition}

\subsection{$L^q-L^q$ estimate for $\s>\s_0$}

First we consider the case $\s>\s_0$ where $\s_0$ has been defined in \eqref{sig0}. Moreover, we assume that the Sobolev inequality \eqref{S} holds on $M$.

\begin{lemma}\label{lemma41}
Assume \eqref{costanti} and, besides, that  $\s>p-1+\frac pN.$ Assume that inequality \eqref{S} holds. Suppose that $u_0\in L^{\infty}(B_R)$, $u_0\ge0$. Let $1< q<\infty$ and assume that
\begin{equation}\label{eq41}
 \|u_0\|_{\textrm L^{\s_0}(B_R)}\,<\,\bar\varepsilon
\end{equation}
with $\bar\varepsilon=\bar\varepsilon(\s,p,q,C_{s,p})>0$ sufficiently small. Let $u$ be the solution of problem \eqref{eq111} in the sense of Definition \ref{def31}, and assume that $u\in C([0,T], L^q(B_R))$ for any $q\in (1,+\infty)$, for any $T>0$. Then
\begin{equation}\label{eq42}
\|u(t)\|_{L^q(B_R)} \le \|u_0\|_{L^q(B_R)}\quad \textrm{ for all }\,\, t>0\,.
\end{equation}
\end{lemma}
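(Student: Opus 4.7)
The strategy is a standard energy estimate: test the equation against $u^{q-1}$, rewrite the diffusion term in divergence form via a chain-rule substitution, and then dominate the reaction term by the diffusion term using Hölder plus the Sobolev inequality \eqref{S}. The smallness in the $L^{\sigma_0}$ norm is exactly what makes the Sobolev constant beat the reaction, and this smallness is then propagated in time by a continuity/bootstrap argument.

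Concretely, I would multiply the equation in \eqref{eq111} by $u^{q-1}$ (for $q<2$, first working with $(u+\delta)^{q-1}$ and sending $\delta\downarrow 0$, which is legitimate since $u\in L^\infty(B_R)$), integrate over $B_R$, and use the Dirichlet boundary condition together with $T_k(u^\sigma)\leq u^\sigma$ to obtain
\begin{equation*}
\frac{1}{q}\frac{d}{dt}\int_{B_R} u^q\,d\mu + (q-1)\int_{B_R} u^{q-2}|\nabla u|^p\,d\mu \;\leq\; \int_{B_R} u^{\sigma+q-1}\,d\mu.
\end{equation*}
Setting $w:=u^{(p+q-2)/p}$ (which extends by zero to an element of $W^{1,p}(M)$), a direct chain-rule gives $\int u^{q-2}|\nabla u|^p\,d\mu = (p/(p+q-2))^p\int|\nabla w|^p\,d\mu$. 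For the reaction term, splitting the exponent as $\sigma+q-1=(\sigma-p+1)+(p+q-2)$ and applying Hölder with dual exponents $N/p$ and $N/(N-p)$ yields
\begin{equation*}
\int_{B_R}u^{\sigma+q-1}\,d\mu \;\leq\; \|u\|_{L^{\sigma_0}(B_R)}^{\sigma-p+1}\,\|w\|_{L^{p^*}(B_R)}^{p},
\end{equation*}
using $(\sigma-p+1)N/p=\sigma_0$ and $(p+q-2)N/(N-p)=p^{*}(p+q-2)/p$. Applying \eqref{S} to $w$ and combining the three estimates gives
\begin{equation*}
\frac{1}{q}\frac{d}{dt}\|u\|_{L^q(B_R)}^q \;\leq\; \left[\frac{\|u(t)\|_{L^{\sigma_0}(B_R)}^{\sigma-p+1}}{C_{s,p}^{p}}-(q-1)\!\left(\frac{p}{p+q-2}\right)^{p}\right]\int_{B_R}|\nabla w|^p\,d\mu.
\end{equation*}

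To conclude, I would first specialize to $q=\sigma_0$, which is admissible because $\sigma>p-1+p/N$ forces $\sigma_0>1$. Choosing $\bar\varepsilon$ so that $\bar\varepsilon^{\sigma-p+1}\leq\tfrac{C_{s,p}^{p}}{2}(\sigma_0-1)\bigl(\tfrac{p}{p+\sigma_0-2}\bigr)^{p}$, I run a continuity argument: let $t^{*}:=\sup\{t\geq 0:\|u(s)\|_{L^{\sigma_0}(B_R)}<\bar\varepsilon\text{ for all }s\in[0,t]\}$. On $[0,t^{*})$ the bracket (with $q=\sigma_0$) is negative, so $t\mapsto\|u(t)\|_{L^{\sigma_0}(B_R)}$ is non-increasing and hence bounded by $\|u_0\|_{L^{\sigma_0}(B_R)}<\bar\varepsilon$; the assumed continuity $u\in C([0,T],L^{\sigma_0}(B_R))$ then forces $t^{*}=+\infty$. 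Plugging this uniform control back into the differential inequality for a general $q>1$, and taking $\bar\varepsilon$ small enough also relative to the exponent $q$ in the form $\bar\varepsilon^{\sigma-p+1}\leq\tfrac{C_{s,p}^p}{2}(q-1)\bigl(\tfrac{p}{p+q-2}\bigr)^{p}$, the bracket becomes non-positive for all $t>0$, giving $\tfrac{d}{dt}\|u(t)\|_{L^q(B_R)}^q\leq 0$ and hence \eqref{eq42}.

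The only nontrivial point is the continuity/bootstrap step, since propagating the $L^{\sigma_0}$ smallness is what decouples the nonlinear reaction from the $L^q$ computation for general $q$. The $L^q$-continuity in time hypothesized in the statement is exactly what makes the set defining $t^{*}$ open; the rest of the argument is an algebraic matching of exponents between Hölder and Sobolev, which works out cleanly precisely because $\sigma_0=(\sigma-p+1)N/p$ is the critical exponent that turns the reaction term into a controlled multiple of the Sobolev gradient energy.
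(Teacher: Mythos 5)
Your proof is correct and follows essentially the same line as the paper's: test against $u^{q-1}$, rewrite the diffusion term via $w=u^{(p+q-2)/p}$, split the reaction exponent as $(\sigma-p+1)+(p+q-2)$ and apply H\"older with exponents $N/p$, $N/(N-p)$, then absorb the reaction into the Sobolev-controlled gradient term; the resulting smallness threshold $\bar\varepsilon^{\sigma-p+1}<(q-1)\bigl(\tfrac{p}{p+q-2}\bigr)^p C_{s,p}^p$ (up to the harmless factor $1/2$) is the same as the paper's. The only stylistic difference is that you propagate the $L^{\sigma_0}$ smallness globally first via a single open/closed (supremum) bootstrap and then read off the $L^q$ decay, whereas the paper runs the $L^q$ and $L^{\sigma_0}$ decays in parallel on successive intervals $[0,t_0],[t_0,t_1],\dots$; both versions require $\bar\varepsilon$ small relative to both $q$ and $\sigma_0$, and yield the same conclusion.
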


Note that the request $u\in C([0,T],L^q(B_R))$ for any $q\in(1,\infty)$, for any $T>0$ is not restrictive, since we will construct solutions belonging to that class. This remark also applies to several other intermediate results below.

\begin{proof}
Since $u_0$ is bounded and $T_k(u^{\s})$ is a bounded and Lipschitz function, by standard results, there exists a unique solution of problem \eqref{eq111} in the sense of Definition \ref{def31}. We now multiply both sides of the differential equation in problem \eqref{eq111} by $u^{q-1}$,
\begin{equation*}\label{eq0}
\int_{B_R}u_t\,u^{q-1}\,dx =\int_{B_R}  \dive \left(|\nabla u|^{p-2}\nabla u \right)\,u^{q-1} \,dx\,+ \int_{B_R}  T_k(u^{\s})\,u^{q-1}\,dx \,.
\end{equation*}
Now, formally integrating by parts in $B_R$. This can be justified by standard tools, by an approximation procedure. We get
\begin{equation}\label{eq43}
\frac{1}{q}\frac{d}{dt}\int_{B_R} u^{q}\,d\mu =-(q-1)\int_{B_R}  u^{q-2}\,|\nabla u|^p \,d\mu\,+ \int_{B_R}  T_k(u^{\s})\,u^{q-1}\,d\mu \,.
\end{equation}
Observe that, thanks to Sobolev inequality \eqref{S}, we have
\begin{equation}\label{eq43b}
\begin{aligned}
\int_{B_R}  u^{q-2}\,|\nabla u|^p \,d\mu&= \left(\frac{p}{p+q-2}\right)^p \int_{B_R}\left |\nabla \left(u^{\frac{p+q-2}{p}}\right)\right|^p \,d\mu\\
&\ge \left(\frac{p}{p+q-2}\right)^pC_{s,p}^p\left( \int_{B_R}  u^{\frac{p+q-2}{p}\frac{pN}{N-p}}\,d\mu \right)^{\frac{N-p}{N}}\,.
\end{aligned}
\end{equation}
Moreover, the last term in the right hand side of \eqref{eq43}, by using the H{\"o}lder inequality with exponents $\frac{N}{N-p}$ and $\frac{N}{p}$, becomes
\begin{equation}\label{eq43c}
\begin{aligned}
\int_{B_R} T_k(u^{\s})\,u^{q-1}\,dx&\le \int_{B_R} u^{\s}\,u^{q-1}\,dx = \int_{B_R}  u^{\s-p+1}\,u^{p+q-2}\,dx \\
&\le \|u(t)\|^{\s-p+1}_{L^{(\s-p+1)\frac{N}{p}}(B_R)} \|u(t)\|^{p+q-2}_{L^{(p+q-2)\frac{N}{N-p}}(B_R)}\,.
\end{aligned}
\end{equation}
Combining \eqref{eq43b} and \eqref{eq43c} we get
\begin{equation}\label{eq44}
\frac{1}{q}\frac{d}{dt} \|u(t)\|^q_{L^q(B_R)}\le -\left[(q-1)\left(\frac{p}{p+q-2}\right)^pC_{s,p}^p-\|u(t)\|^{\s-p+1}_{L^{\s_0}(B_R)}\right] \|u(t)\|^{p+q-2}_{L^{(p+q-2)\frac{N}{N-p}}(B_R)}
\end{equation}
Take $T>0$. Observe that, due to hypotheses \eqref{eq41} and the known continuity in $L^{\s_0}$ of the map $t\mapsto u(t)$ in $[0,T]$, there exists $t_0>0$ such that
$$
\|u(t)\|_{L^{\s_0}(B_R)}\le 2\, \bar\varepsilon\,\,\,\,\,\text{for any}\,\,\,\, t\in [0,t_0]\,.
$$
Hence \eqref{eq44} becomes, for any $t\in (0,t_0]$,
$$
\frac{1}{q}\frac{d}{dt} \|u(t)\|^q_{L^q_{}(B_R)}\le -\left[\left(\frac{p}{p+q-2}\right)^p(q-1)C_{s,p}^p-(2\,\bar\varepsilon)^{\s-p+1} \right] \|u(t)\|^{p+q-2}_{L^{(p+q-2)\frac{N}{N-p}}(B_R)}\,\le 0\,,
$$
where the last inequality is obtained by using \eqref{eq41}. We have proved that $t\mapsto \|u(t)\|_{L^q(B_R)}$ is decreasing in time for any $t\in (0,t_0]$, thus
\begin{equation}\label{eq45}
\|u(t)\|_{L^q(B_R)}\le \|u_0\|_{L^q(B_R)}\quad \text{for any} \,\,\,t\in (0,t_0]\,.
\end{equation}
In particular, inequality \eqref{eq45} follows for the choice $q=\s_0$ in view of hypothesis \eqref{eq41}. Hence we have
$$
\|u(t)\|_{L^{\s_0}(B_R)}\le \|u_0\|_{L^{\s_0}(B_R)}\,<\,\bar\varepsilon \quad \text{for any} \,\,\,\,t\in (0,t_0]\,.
$$
Now, we can repeat the same argument in the time interval $(t_0, t_1]$, where $t_1$ is chosen, due to the continuity of $u$, in such a way that
$$
\|u(t)\|^{\s-p+1}_{L^{\s_0}(B_R)}\le 2\, \bar\varepsilon\,\,\,\,\,\text{for any}\,\,\, t\in (t_0,t_1]\,.
$$
Thus we get
\begin{equation*}
\|u(t)\|_{L^q(B_R)}\le \|u_0\|_{L^q(B_R)}\quad \text{for any} \,\,\,t\in (0,t_1]\,.
\end{equation*}
Iterating this procedure we obtain that $t\mapsto\|u(t)\|_{L^q(B_R)}$ is decreasing in $[0,T]$. Since $T>0$ was arbitrary, the thesis follows.

\end{proof}

\subsection{$L^{q_0}-L^q$ estimate for $\s>\s_0$}

Using a Moser type iteration procedure we prove the following result:

\begin{proposition}\label{prop42}
Assume \eqref{costanti} and, besides, thatn$\sigma>p-1+\frac pN$.  Assume that inequality \eqref{S} holds. Suppose that $u_0\in L^{\infty}(B_R)$, $u_0\ge0$. Let $u$ be the solution of problem \eqref{eq111}, so that $u\in C([0,T], L^q(B_R))$ for any $q\in (1,+\infty)$, for any $T>0$. Let $1< q_0\le q<+\infty$ and assume that
\begin{equation}\label{eq45b}
\|u_0\|_{L^{\s_0}(B_R)}\,\le\,\tilde\varepsilon_0
\end{equation}
for $\tilde\varepsilon_0=\tilde\varepsilon_0(\s,p,N,C_{s,p},q,q_0)$ sufficiently small. Then there exists $C(p,q_0,C_{s,p},\tilde\varepsilon_0,N,q)>0$ such that
\begin{equation}\label{eq46}
\|u(t)\|_{L^q(B_R)} \le C\,t^{-\gamma_q}\|u_0\|^{\delta_q}_{L^{q_0}(B_R)}\quad \textrm{ for all }\,\, t>0\,,
\end{equation}
where
\begin{equation}\label{eq47}
\gamma_q=\left(\frac{1}{q_0}-\frac{1}{q}\right)\frac{N\,q_0}{p\,q_0+N(p-2)}\,,\quad \delta_q=\frac{q_0}{q}\left(\frac{q+\frac{N}{p}(p-2)}{q_0+\frac{N}{p}(p-2)}\right)\,.
\end{equation}
\end{proposition}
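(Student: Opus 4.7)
The plan is to prove Proposition~\ref{prop42} by a Moser-type iteration along the geometric sequence $\{q_n\}$ defined in \eqref{eq400}, using Lemma~\ref{lemma41} both as the base case of the induction and as the device that propagates smallness of the $L^{\s_0}$ norm through every level. The base case is Lemma~\ref{lemma41} at $q=q_0$, which gives $\|u(t)\|_{L^{q_0}(B_R)}\le\|u_0\|_{L^{q_0}(B_R)}$, matching \eqref{eq46}--\eqref{eq47} with $\gamma_{q_0}=0$ and $\delta_{q_0}=1$.

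First I would derive the fundamental differential inequality at each level $q_n$. Mimicking \eqref{eq43}--\eqref{eq44} with $q_n$ in place of $q$, I multiply the equation by $u^{q_n-1}$, integrate by parts, and apply Sobolev \eqref{S} to $v=u^{(p+q_n-2)/p}$: since $q_{n+1}=(p+q_n-2)N/(N-p)$, the diffusive term dominates a multiple of $\|u\|_{L^{q_{n+1}}(B_R)}^{p+q_n-2}$. H\"older with exponents $N/p$ and $N/(N-p)$ applied to $u^{\s+q_n-1}=u^{\s-p+1}\cdot u^{p+q_n-2}$ bounds the reaction by $\|u\|_{L^{\s_0}}^{\s-p+1}\|u\|_{L^{q_{n+1}}}^{p+q_n-2}$. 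Choosing $\tilde\varepsilon_0$ so that $\tilde\varepsilon_0^{\s-p+1}$ does not exceed half the Sobolev coefficient \emph{uniformly} in $n=0,1,\dots,\bar n$ --- which is precisely the minimum appearing in \eqref{eq40} --- and using Lemma~\ref{lemma41} at $q=\s_0$ to guarantee that $\|u(t)\|_{L^{\s_0}(B_R)}\le\|u_0\|_{L^{\s_0}(B_R)}\le\tilde\varepsilon_0$ for every $t>0$, the reaction is absorbed and one obtains
$$
\frac{d}{dt}\|u(t)\|_{L^{q_n}(B_R)}^{q_n}\;\le\;-A_n\,\|u(t)\|_{L^{q_{n+1}}(B_R)}^{p+q_n-2}
$$
for an explicit $A_n=A_n(N,p,C_{s,p},q_n)>0$.

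Next comes the Moser step and the induction. Setting $y_n(t):=\|u(t)\|_{L^{q_n}(B_R)}^{q_n}$ and using $p+q_n-2=q_{n+1}(N-p)/N$, the inequality above reads $y_n'(t)\le -A_n\,y_{n+1}(t)^{(N-p)/N}$. Lemma~\ref{lemma41} applied at $q=q_{n+1}$ says $y_{n+1}$ is nonincreasing, so integrating over $(t/2,t)$ yields
$$
\frac{A_n\,t}{2}\,y_{n+1}(t)^{(N-p)/N}\;\le\;y_n(t/2)-y_n(t)\;\le\;y_n(t/2),
$$
equivalently $\|u(t)\|_{L^{q_{n+1}}}^{q_{n+1}}\le\bigl(2\,y_n(t/2)/(A_n\,t)\bigr)^{N/(N-p)}$. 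Plugging in the inductive hypothesis $\|u(t/2)\|_{L^{q_n}}\le C_n(t/2)^{-\gamma_n}\|u_0\|_{L^{q_0}}^{\delta_n}$ and extracting $q_{n+1}$-th roots, one reads off
$$
\gamma_{n+1}=\frac{(\gamma_n\,q_n+1)\,N}{(N-p)\,q_{n+1}},\qquad \delta_{n+1}=\frac{\delta_n\,q_n\,N}{(N-p)\,q_{n+1}},
$$
and a short calculation using the identity $a_n:=p\,q_n+N(p-2)=\tfrac{N}{N-p}\,a_{n-1}$ (immediate from the definition of $q_n$) verifies that these recursions reproduce the closed-form exponents in \eqref{eq47} evaluated at $q=q_n$.

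For a general $q\in[q_0,+\infty)$, let $\bar n$ be the first index with $q_{\bar n}\ge q$, as in Remark~\ref{rem2}, and interpolate by log-convexity of $L^r$ norms between $L^{q_{\bar n-1}}$ and $L^{q_{\bar n}}$; since both $\gamma_q$ and $\delta_q$ in \eqref{eq47} are affine in $1/q$, the exponents produced by this interpolation match exactly, yielding \eqref{eq46} with a constant $C=C(p,q_0,C_{s,p},\tilde\varepsilon_0,N,q)$. The main obstacle I foresee is bookkeeping rather than conceptual: verifying that the recursions produce precisely the closed-form exponents in \eqref{eq47}, and keeping the smallness threshold uniform over the finite chain $n=0,\dots,\bar n$ --- which is exactly what forces the $q$-dependence of $\tilde\varepsilon_0$ made explicit in \eqref{eq40}.
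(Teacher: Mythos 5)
Your proposal is correct and follows essentially the same strategy as the paper: multiply by $u^{q_n-1}$, integrate by parts, apply Sobolev and H\"older exactly as in \eqref{eq43b}--\eqref{eq43c}, absorb the reaction term by taking $\tilde\varepsilon_0$ uniformly small over the finite chain $q_0,\dots,q_{\bar n}$ (the minimum in \eqref{eq40}), invoke Lemma~\ref{lemma41} both for the monotonicity of $\|u(t)\|_{L^{q_{n+1}}}$ and to propagate the $L^{\s_0}$ smallness, iterate along $\{q_n\}$ up to $q_{\bar n}$, and finally interpolate down to $q$. The only noteworthy (and immaterial) difference is the time bookkeeping: the paper iterates over the dyadic partition $t_n=(2^n-1)t/(2^{\bar n}-1)$ of $(0,t)$ and composes the resulting discrete inequalities $U_n\lesssim t^{-s/q_n}U_{n-1}^{sq_{n-1}/q_n}$, whereas you run an honest induction on $n$ where each level's estimate holds for \emph{all} $t$ and the inductive step passes from $t/2$ to $t$; both lead to the same recursion for the exponents, which you correctly verify via the identity $q_{n+1}+\frac{N(p-2)}{p}=\frac{N}{N-p}\bigl(q_n+\frac{N(p-2)}{p}\bigr)$, and the same form of $\gamma_q,\delta_q$ by affineness in $1/q$.
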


\begin{proof}
Let $\{q_n\}$ be the sequence defined in \eqref{eq400}. Let $\bar n$ be the first index such that $q_{\bar n}\ge q$. Observe that $\bar n$ is well defined in view of the mentioned properties of $\{q_n\}$, see \eqref{eq400}. We start by proving a smoothing estimate from $q_0$ to $q_{\bar n}$ using a Moser iteration technique (see also \cite{A}). Afterwords, if $q_{\bar n} \equiv q$ then the proof is complete. Otherwise, if $q_{\bar n} > q$ then, by interpolation, we get the thesis.

Let $t>0$, we define
\begin{equation}\label{eq48}
r=\frac{t}{2^{\overline n}-1} , \quad t_n=(2^n-1)r\,.
\end{equation}
Observe that $t_0=0, \quad t_{\bar n}=t,\quad \{t_n\}$ is an increasing sequence w.r.t. $n$. Now, for any $1\le n\le \overline n$, we multiply equation \eqref{eq111} by $u^{q_{n-1}-1}$ and integrate in $B_R\times[t_{n-1},t_{n}]$. Thus we get
$$
\begin{aligned}
\int_{t_{n-1}}^{t_{n}}\int_{B_R} u_t\,u^{q_{n-1}-1}\,d\mu\,dt &+\int_{t_n}^{t_{n+1}}\int_{B_R}  \dive\left(|\nabla u|^{p-2}\nabla u\right)\,u^{q_{n-1}-1} \,d\mu\,dt\\
&= \int_{t_{n-1}}^{t_{n}}\int_{B_R}  T_k(u^\sigma)\,u^{q_{n-1}-1}\,d\mu\,dt.
\end{aligned}
$$
Then we integrate by parts in $B_R\times[t_{n-1},t_{n}]$. Due to Sobolev inequality \eqref{S} and assumption \eqref{eq45b}, we get
\begin{equation}\label{eq48b}
\begin{aligned}
&\frac{1}{q_{n-1}}\left[\|u(\cdot, t_{n})\|^{q_{n-1}}_{L^{q_{n-1}}(B_R)}-\|u(\cdot, t_{n-1})\|^{q_{n-1}}_{L^{q_{n-1}}(B_R)}\right]\\&\le -\left[\left(\frac{p}{p+q_{n-1}-2}\right)^p(q_{n-1}-1)C_{s,p}^p-2\,\tilde\varepsilon_0 \right] \int_{t_{n-1}}^{t_{n}}\|u(\tau)\|^{p+q_{n-1}-2}_{L^{(p+q_{n-1}-2)\frac{N}{N-p}}(B_R)}\,d\tau,
\end{aligned}
\end{equation}
where we have made use of inequality $T_k(u^{\s})\,\le\,u^{\s}.$ We define $q_n$ as in \eqref{eq400}, so that $(p+q_{n-1}-2)\dfrac{N}{N-p}=q_{n}$. Hence, in view of hypotheses \eqref{eq45b} we can apply Lemma \ref{lemma41} to the integral on the right hand side of \eqref{eq48b}, hence we get
\begin{equation}\label{eq49}
\begin{aligned}
&\frac{1}{q_{n-1}}\left[\|u(\cdot, t_{n})\|^{q_{n-1}}_{L^{q_{n-1}}(B_R)}-\|u(\cdot, t_{n-1})\|^{q_{n-1}}_{L^{q_{n-1}}(B_R)}\right]\\&\le -\left[\left(\frac{p}{p+q_{n-1}-2}\right)^p(q_{n-1}-1)C_{s,p}^p-2\,\tilde\varepsilon_0 \right] \|u(\cdot,t_{n})\|^{p+q_{n-1}-2}_{L^{(p+q_{n-1}-2)\frac{N}{N-p}}(B_R)}|t_{n}-t_{n-1}|.
\end{aligned}
\end{equation}
Observe that
\begin{equation}\label{eq410}
\begin{aligned}
&\left \|u(\cdot, t_{n})\right\|^{q_{n-1}}_{L^{q_{n-1}}(B_R)} \ge 0,\\
& |t_{n}-t_{n-1}|=\frac{2^{n-1}\,t}{2^{\bar n}-1}.
\end{aligned}
\end{equation}
We define
\begin{equation}\label{eq411}
d_{n-1}:=\left[\left(\frac{p}{p+q_{n-1}-2}\right)^p(q_{n-1}-1)C_{s,p}^p-2\tilde\varepsilon_0\right]^{-1}\frac{1}{q_{n-1}}.
\end{equation}
By plugging \eqref{eq410} and \eqref{eq411} into \eqref{eq49} we get
$$
\|u(\cdot, t_{n})\|^{p+q_{n-1}-2}_{L^{(p+q_{n-1}-2)\frac{N}{N-p}}(B_R)}\le \frac{(2^{\bar n}-1)d_n\,}{2^{n-1}\,t}\|u(\cdot,t_{n-1})\|^{q_{n-1}}_{L^{q_{n-1}}(B_R)}.
$$
The latter can be rewritten as
\begin{equation*}\label{eq412}
\|u(\cdot, t_{n})\|_{L^{(p+q_{n-1}-2)\frac{N}{N-p}}(B_R)}\le \left(\frac{(2^{\bar n}-1)d_n}{2^{n-1}}\right)^{\frac{1}{p+q_{n-1}-2}}\,t^{-\frac{1}{p+q_{n-1}-2}}\|u(\cdot,t_{n-1})\|^{\frac{q_{n-1}}{p+q_{n-1}-2}}_{L^{q_{n-1}}(B_R)}.
\end{equation*}
Due to to the definition of the sequence $\{q_n\}$ in \eqref{eq400} we write
\begin{equation}\label{eq413}
\|u(\cdot, t_{n})\|_{L^{q_{n}}(B_R)}\le \left(\frac{(2^{\bar n}-1)d_{n-1}}{2^{n-1}}\right)^{\frac{N}{N-p}\frac{1}{q_{n}}}\,t^{-\frac{N}{N-p}\frac{1}{q_{n}}}\left\|u(\cdot,t_{n-1})\right\|^{\frac{q_{n-1}}{q_{n}}\frac{N}{N-p}}_{L^{q_{n-1}}(B_R)}.
\end{equation}
We define
\begin{equation}\label{eq414}
s:=\frac{N}{N-p}.
\end{equation}
Observe that, for any $1\le n\le \bar n$, we have
\begin{equation}\label{eq415}
\begin{aligned}
\left(\frac{(2^{\bar n}-1)d_{n-1}}{2^{n-1}}\right)^{s}&= \left\{\frac{2^{\bar n}-1}{2^{n-1}}\left[\left(\frac{p}{p+q_{n-1}-2}\right)^p(q_{n-1}-1)C_{s,p}^p-2\,\varepsilon\right]^{-1}\frac{1}{q_{n-1}}\right\}^{s}\\
&= \left[\frac{2^{\bar n}-1}{2^{n-1}}\frac{1}{q_{n-1}(q_{n-1}-1)\left(\dfrac{p}{p+q_{n-1}-2}\right)^pC_{s,p}^p-2\,\varepsilon q_{n-1}}\right]^{s},
\end{aligned}
\end{equation}
and
\begin{equation}\label{eq416}
\frac{2^{\bar n}-1}{2^{n-1}} \le 2^{\bar n+1}\,\,\,\,\,\quad\text{for all}\,\,\, 1\le n\le \bar n.
\end{equation}
Consider the function
$$
g(x):=\left[(x-1)\left(\frac{p}{p+x-2}\right)^pC_{s,p}^p-2\,\varepsilon\right] x\,\,\,\,\,\quad\text{for}\,\,\,q_0\le x \le q_{\bar n},\,\,\,x\in\R.
$$
Observe that, due to \eqref{eq40}, $g(x)>0$ for any $q_0\le x \le q_{\bar n}$. Moreover, $g$ has a minimum in the interval $q_0\le x \le q_{\bar n}$; call $\tilde x$ the point at which the minimum is attained. Then we have
\begin{equation}\label{eq417}
\frac{1}{g(x)}\le \frac{1}{g(\tilde x)} \quad\quad\text{for any }\,\,\,q_0\le x \le q_{\bar n}.
\end{equation}
Thanks to \eqref{eq415}, \eqref{eq416} and \eqref{eq417}, there exist a positive constant $C$, where $C=C(N,C_{s,p},\tilde\varepsilon_0, \bar n,p,q_0)$ such that
\begin{equation}\label{eq418}
\left(\frac{(2^{\bar n}-1)d_{n-1}}{2^{n-1}}\right)^{s} \le C\,,\quad  \text{for all}\,\,\, 1\le n\le\bar n.
\end{equation}
By plugging \eqref{eq414} and \eqref{eq418} into \eqref{eq413} we get, for any $1\le n\le \bar n$
\begin{equation}\label{eq419}
\|u(\cdot, t_{n})\|_{L^{q_n}(B_R)}\le C^{\frac{1}{q_n}}t^{-\frac{s}{q_{n}}}\left\|u(\cdot,t_{n-1})\right\|^{\frac{s\,q_{n-1}}{q_{n}}}_{L^{q_{n-1}}(B_R)}.
\end{equation}
Let us set
$$
U_n:=\|u(\cdot,t_n)\|_{L^{q_n}(B_R)}.
$$
Then \eqref{eq419} becomes
$$
\begin{aligned}
U_n&\le C^{\frac{1}{q_n}}t^{-\frac{s}{q_{n}}}U_{n-1}^{\frac{q_{n-1}s}{q_{n}}}\\
&\le C^{\frac{1}{q_n}}t^{-\frac{s}{q_{n}}}\left [ C^{\frac{s}{q_n}}t^{-\frac{s^2}{q_{n}}} U_{k-2}^{s^2\frac{q_{n-2}}{q_n}}\right] \\
&\le ...\\
&\le C^{\frac{1}{q_n}\sum_{i=0}^{n-1}s^i}t^{-\frac{s}{q_n}\sum_{i=0}^{n-1}s^i} U_0^{s^n\frac{q_0}{q_n}}.
\end{aligned}
$$
We define
\begin{equation}\label{eq420}
\begin{aligned}
&\alpha_n:= \frac{1}{q_n}\sum_{i=0}^{n-1}s^i,\\
&\beta_n:= \frac{s}{q_n}\sum_{i=0}^{n-1}s^i=s\,\alpha_n,\\
&\delta_n:=s^n\frac{q_0}{q_n}.
\end{aligned}
\end{equation}
By substituting $n$ with $\bar n$ into \eqref{eq420} we get
\begin{equation}\label{eq421}
\begin{aligned}
&\alpha_{\bar n}:=\frac{N-p}{p}\frac{A}{ q_{\bar n}},\\
&\beta_{\bar n}:=\frac{N}{p}\frac{A}{q_{\bar n}},\\
&\delta_{\bar n}:=(A+1)\frac{q_0}{q_{\bar n}}.
\end{aligned}
\end{equation}
where $A:=\left(\frac{N}{N-p}\right)^{\bar n}-1$. Hence, in view of \eqref{eq48} and \eqref{eq421}, \eqref{eq419} with $n=\bar n$ yields
\begin{equation}\label{eq422b}
\|u(\cdot, t)\|_{L^{q_{\bar n}}(B_R)}\le C^{\frac{N-p}{p}\frac{A}{q_{\bar n}}}\,t^{-\frac{N}{p}\frac{A}{q_{\bar n}}}\left\|u_0\right\|^{q_{0}\frac{A+1}{q_{\bar n}}}_{L^{q_{0}}(B_R)}.
\end{equation}
We have proved a smoothing estimate from $q_0$ to $q_{\bar n}$.  Observe that if $q_{\bar n}= q$ then the thesis is proved. Now suppose that $q_{\bar n}>q$. Observe that $q_0\le  q < q_{\bar n}$ and define
$$
B:=N(p-2)A+p\,q_0(A+1).
$$
From \eqref{eq422b} and Lemma \ref{lemma41}, we get, by interpolation,
\begin{equation}\label{eq423b}
\begin{aligned}
\|u(\cdot, t)\|_{L^{ q}(B_R)}&\le \|u(\cdot, t)\|_{L^{q_0}(B_R)}^{\theta}\|u(\cdot, t)\|_{L^{q_{\bar n}}(B_R)}^{1-\theta}\\
&\le \|u_0(\cdot)\|_{L^{q_0}(B_R)}^{\theta} C\,t^{-\frac{NA}{B}(1-\theta)}\left\|u_0\right\|^{pq_{0}\frac{A+1}{B}(1-\theta)}_{L^{q_{0}}(B_R)}\\
&=C\,t^{-\frac{N\,A}{B}(1-\theta)}\left\|u_0\right\|^{pq_{0}\frac{A+1}{B}(1-\theta)+\theta}_{L^{q_{0}}(B_R)},
\end{aligned}
\end{equation}
where
\begin{equation}\label{eq424b}
\theta=\frac{q_0}{ q}\left(\frac{q_{\bar n} - q}{q_{\bar n} - q_0}\right).
\end{equation}
Observe that
$$
\begin{aligned}
&\text{(i)}\quad\frac{N A}{B}(1-\theta)=\frac{N}{p}\left(\frac{q-q_0}{q}\right)\frac{1}{q_0+\frac{N}{p}(p-2)};\\
&\text{(ii)}\quad p\,q_0\frac{A+1}{B}(1-\theta)+\theta=\frac{q_0}{q}\frac{q+\frac{N}{p}(p-2)}{q_0+\frac{N}{p}(p-2)}.
\end{aligned}
$$
Combining \eqref{eq423b}, \eqref{eq47} and \eqref{eq424b} we get the claim, noticing that $q$ was arbitrarily in $[q_0, +\infty)$.
\end{proof}

\begin{remark}
One can not let $q\to+\infty$ is the above bound. In fact, one can show that $\varepsilon \longrightarrow 0$ as $q\to\infty$. So in such limit the hypothesis on the norm of the initial datum \eqref{eq40} is satisfied only when $u_0\equiv 0$.
\end{remark}

\subsection{$L^q-L^q$ estimates for $\sigma>p-1$}
We now consider the case $\s>p-1$ and that the Sobolev and Poincar\'e inequalities \eqref{S}, \eqref{P} hold on $M$.

\begin{lemma}\label{lemma71}
Assume \eqref{costanti} and, besides, that $p>2$. Assume that inequalities \eqref{S} and \eqref{P} hold. Suppose that $u_0\in L^{\infty}(B_R)$, $u_0\ge0$. Let $1<q<\infty$ and assume that
\begin{equation}\label{eq713}
\|u_0\|_{L^{\s\frac N{p}}(B_R)}<\bar\varepsilon_1
\end{equation}
for a suitable $\tilde\varepsilon_1=\tilde \varepsilon_1(\s, p, N, C_p, C_{s,p}, q)$ sufficiently small. Let $u$ be the solution of problem \eqref{eq111} in the sense of Definition \ref{def31}, such that in addition $u\in C([0, T); L^q(B_R))$. Then
\begin{equation}\label{eq72}
\|u(t)\|_{L^q(B_R)} \le \|u_0\|_{L^q(B_R)}\quad \textrm{ for all }\,\, t>0\,.
\end{equation}
\end{lemma}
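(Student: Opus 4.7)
The argument is a direct analogue of the one in Lemma \ref{lemma41}, with the role of $\|u\|_{L^{\s_0}}$ taken over by $\|u\|_{L^{\s N/p}}$ and with the Poincar\'e inequality \eqref{P} entering in a crucial way to compensate for the weaker assumption $\s>p-1$ (as opposed to $\s>p-1+p/N$). As before, I would multiply the equation in \eqref{eq111} by $u^{q-1}$, integrate over $B_R$, justify the resulting integration by parts by a standard approximation procedure, and use the pointwise bound $T_k(u^\s)\le u^\s$. Setting $v:=u^{(p+q-2)/p}$, which vanishes on $\partial B_R$ so that both \eqref{S} and \eqref{P} can be applied to it, the identity $u^{\s+q-1}=u^{\s-p+1}\,v^p$ yields
\begin{equation*}
\frac{1}{q}\frac{d}{dt}\|u(t)\|_{L^q(B_R)}^q\le -(q-1)\Bigl(\frac{p}{p+q-2}\Bigr)^p\int_{B_R}|\nabla v|^p\,d\mu+\int_{B_R}u^{\s-p+1}\,v^p\,d\mu.
\end{equation*}

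The reaction term is then estimated by H\"older's inequality with exponents $\alpha:=\frac{\s N}{p(\s-p+1)}$ and $\alpha':=\alpha/(\alpha-1)$, chosen precisely so that $(\s-p+1)\alpha=\s N/p$:
\begin{equation*}
\int_{B_R}u^{\s-p+1}\,v^p\,d\mu\le \|u\|_{L^{\s N/p}(B_R)}^{\s-p+1}\,\|v\|_{L^{p\alpha'}(B_R)}^p.
\end{equation*}
A short check shows that the inequalities $\s>p-1$ and $p<N$ force $p\alpha'\in[p,p^*]$, so that the $L^{p\alpha'}$-norm lies in the interpolation scale between $L^p$ and $L^{p^*}$. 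Interpolating and applying \eqref{P} at the $L^p$ end and \eqref{S} at the $L^{p^*}$ end yields a constant $\tilde C=\tilde C(C_{s,p},C_p,N,p,\s,q)>0$ with $\|v\|_{L^{p\alpha'}(B_R)}^p\le\tilde C\,\|\nabla v\|_{L^p(B_R)}^p$. This is the one place in which \eqref{P} is indispensable, because $p\alpha'$ is strictly less than $p^*$ and Sobolev alone could not control it by the gradient without generating a norm on $v$ itself.

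Combining these bounds gives the key differential inequality
\begin{equation*}
\frac1q\frac{d}{dt}\|u(t)\|_{L^q(B_R)}^q\le -\Bigl[(q-1)\Bigl(\frac{p}{p+q-2}\Bigr)^p-\tilde C\,\|u(t)\|_{L^{\s N/p}(B_R)}^{\s-p+1}\Bigr]\int_{B_R}|\nabla v|^p\,d\mu,
\end{equation*}
and the proof concludes by the same bootstrap as in Lemma \ref{lemma41}. Namely, pick $\bar\varepsilon_1$ small enough that $\tilde C\,(2\bar\varepsilon_1)^{\s-p+1}$ is strictly smaller than the structural constant $(q-1)(p/(p+q-2))^p$, for the given $q$ and simultaneously for the auxiliary exponent $q=\s N/p$. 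By continuity of $t\mapsto u(t)\in L^{\s N/p}(B_R)$, hypothesis \eqref{eq713} provides a $t_0>0$ on which $\|u(t)\|_{L^{\s N/p}(B_R)}\le 2\bar\varepsilon_1$; the bracket is then positive, $t\mapsto\|u(t)\|_{L^q(B_R)}$ is non-increasing on $[0,t_0]$, and applying the same inequality with $q=\s N/p$ shows in particular that $\|u(t_0)\|_{L^{\s N/p}(B_R)}\le\|u_0\|_{L^{\s N/p}(B_R)}<\bar\varepsilon_1$, so the argument can be iterated to cover an arbitrary $[0,T]$. The main obstacle is the bookkeeping in this last step, i.e.\ making a single smallness condition on the single norm $L^{\s N/p}$ control both the target estimate and its auxiliary; the rest is a direct adaptation of Lemma \ref{lemma41}.
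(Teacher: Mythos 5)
Your proof is correct, but it takes a genuinely different and noticeably shorter route than the paper's. The paper splits the dissipation term with weights $c_1+c_2=1$, peels off a fractional power $\|\nabla(u^{(p+q-2)/p})\|_{L^p}^{p\alpha}$ with $\alpha=(p-1)/\s$ and converts it to a power of $\|u\|_{L^{p+q-2}}$ via \eqref{P}, and separately estimates the reaction term by an $L^r$-interpolation followed by H\"older and Sobolev, the exponent $\alpha$ being chosen so that the resulting powers of $\|u\|_{L^{p+q-2}}$ and $\|\nabla(u^{(p+q-2)/p})\|_{L^p}$ in the reaction bound match those produced on the dissipation side. You instead apply a single H\"older inequality directly to $\int u^{\s-p+1}v^p\,d\mu$ with $v=u^{(p+q-2)/p}$, using exponents $\alpha=\tfrac{\s N}{p(\s-p+1)}$ and its conjugate $\alpha'$, and then interpolate $\|v\|_{L^{p\alpha'}}$ between $\|v\|_{L^p}$ (controlled by \eqref{P}) and $\|v\|_{L^{p^*}}$ (controlled by \eqref{S}). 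Your exponent checks are right: $p\alpha'\le p^*$ reduces to $p(p-1)\ge0$ and $p\alpha'\ge p$ reduces to $\s\ge p-1$, both strict under \eqref{costanti} and $\s>p-1$, so the interpolation parameter lies strictly in $(0,1)$ and $\tilde C$ is independent of $q$. The resulting differential inequality $\tfrac1q\tfrac{d}{dt}\|u\|_{L^q(B_R)}^q\le-\bigl[(q-1)\bigl(\tfrac{p}{p+q-2}\bigr)^p-\tilde C\|u\|_{L^{\s N/p}(B_R)}^{\s-p+1}\bigr]\|\nabla v\|_{L^p(B_R)}^p$ closes by the same bootstrap as in Lemma~\ref{lemma41}, applied both to the target exponent $q$ and to $q=\s N/p$. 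One remark on what each approach buys: the paper's more elaborate decomposition is not gratuitous, as it leaves a surviving term $-c\,\|u\|_{L^{p+q-2}}^{p+q-2}$ after the reaction is absorbed, which is exactly what Proposition~\ref{prop71} feeds into the Moser iteration. Your version leaves $-c\,\|\nabla v\|_{L^p}^p$ instead, from which the same lower-order term follows by one further application of \eqref{P}, so nothing is lost downstream; but if you were to carry your approach through Section~\ref{Lq}, you would need to insert that extra Poincar\'e step at the start of the iteration.
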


\begin{proof}
Since $u_0$ is bounded and $T_k(u^{\s})$ is a bounded and Lipschitz function, by standard results, there exists a unique solution of problem \eqref{eq111} in the sense of Definition \ref{def31}. We now multiply both sides of the differential equation in problem \eqref{eq111} by $u^{q-1}$, therefore
$$
\int_{B_R} u_t\,u^{q-1}\,d\mu =-\int_{B_R}  \dive(|\nabla u|^{p-2}\,\nabla u)u^{q-1}\,d\mu\,+ \int_{B_R}  T_k(u^{\s})\,u^{q-1}\,d\mu \,.
$$
We integrate by parts. This can again be justified by a standard approximation procedure. By using the fact that $T(u^\sigma)\le u^\sigma$, we can write
\begin{equation}\label{eq73}
\begin{aligned}
\frac{1}{q}\frac{d}{dt}\int_{B_R} u^{q}\,d\mu \le-(q-1)\left(\frac{p}{p+q-2}\right)^p\int_{B_R} \left|\nabla\left( u^{\frac{p+q-2}{p}}\right)\right|^p \,d\mu\,+ \int_{B_R} u^{\s+q-1}\,d\mu.
\end{aligned}
\end{equation}
Now we take $c_1>0$, $c_2>0$ such that $c_1+c_2=1$ so that
\begin{equation}\label{eq79}
\int_{B_R} \left|\nabla\left( u^{\frac{p+q-2}{p}}\right)\right|^p \,d\mu = c_1\, \left\|\nabla\left( u^{\frac{p+q-2}{p}}\right)\right\|_{L^p(B_R)}^p \, + c_2\, \left\|\nabla\left( u^{\frac{p+q-2}{p}}\right)\right\|_{L^p(B_R)}^p.
\end{equation}
Take $\alpha\in (0,1)$. Thanks to \eqref{P}, \eqref{eq79} we get
\begin{equation}\label{eq74}
\begin{aligned}
\int_{B_R} \left|\nabla\left( u^{\frac{p+q-2}{p}}\right)\right|^2 \,d\mu& \ge  c_1\,C_p^p \left\| u\right\|^{p+q-2}_{L^{p+q-2}(B_R)}\, + c_2\, \left\|\nabla\left( u^{\frac{p+q-2}{p}}\right)\right\|_{L^p(B_R)}^p\\
&\ge c_1C_p^p \left\| u\right\|^{p+q-2}_{L^{p+q-2}(B_R)}\, +c_2 \left\|\nabla\left( u^{\frac{p+q-2}{p}}\right)\right\|_{L^p(B_R)}^{p+p\alpha-p\alpha}\\
&\ge c_1C_p^p \left\| u\right\|^{p+q-2}_{L^{p+q-2}(B_R)}\, + c_2C_p^{p\alpha} \left\| u\right\|^{\alpha(p+q-2)}_{L^{p+q-2}(B_R)} \left\|\nabla\left( u^{\frac{p+q-2}{p}}\right)\right\|_{L^p(B_R)}^{p-p\alpha}
\end{aligned}
\end{equation}
Moreover, using the interpolation inequality, H\"{o}lder inequality and \eqref{S}, we have
\begin{equation}\label{eq75}
\begin{aligned}
\int_{B_R}  u^{\s+q-1}\,d\mu,&=\|u\|_{L^{\s+q-1}}^{\s+q-1}\\
&\le \|u\|_{L^{p+q-2}(B_R)}^{\theta(\s+q-1)}\,\|u\|_{L^{\s+p+q-2}(B_R)}^{(1-\theta)(\s+q-1)}\\
&\le \|u\|_{L^{p+q-2}(B_R)}^{\theta(\s+q-1)}\left[\|u\|_{L^{\s\frac{N}{p}}(B_R)}^{\s}\|u\|_{L^{(p+q-2)\frac{N}{N-p}}(B_R)}^{p+q-2}\right]^{\frac{(1-\theta)(\s+q-1)}{\s+p+q-2}}\\
&\le \|u\|_{L^{p+q-2}(B_R)}^{\theta(\s+q-1)}\|u\|_{L^{\s\frac{N}{p}}(B_R)}^{(1-\theta)\frac{\s(\s+q-1)}{\s+p+q-2}} \left(\frac{1}{C_{s,p}}\left\|\nabla \left(u^{\frac{p+q-2}{p}}\right)\right\|_{L^p(B_R)}\right)^{p(1-\theta)\frac{\s+q-1}{\s+p+q-2}}
\end{aligned}
\end{equation}
where $\theta:=\frac{(p-1)(p+q-2)}{\s(\s+q-1)}$. By plugging \eqref{eq74} and \eqref{eq75} into \eqref{eq73} we obtain

\begin{equation}\label{eq710}
\begin{aligned}
\frac{1}{q}\frac{d}{dt}\|u(t)\|_{L^q(B_R)}^{q}
& \le-(q-1)\left(\frac{p}{p+q-2}\right)^p c_1\,C_p^p \left\| u\right\|^{p+q-2}_{L^{p+q-2}(B_R)}\, \\
& - (q-1)\left(\frac{p}{p+q-2}\right)^p c_2\,C_p^{p\alpha} \left\| u\right\|^{\alpha(p+q-2)}_{L^{p+q-2}(B_R)} \left\|\nabla\left( u^{\frac{p+q-2}{p}}\right)\right\|_{L^p(B_R)}^{p-p\alpha} \\
& +\tilde{C}\|u\|_{L^{p+q-2}(B_R)}^{\theta(\s+q-1)}\,\|u\|_{L^{\s\frac{N}{p}}(B_R)}^{(1-\theta)\frac{\s(\s+q-1)}{\s+p+q-2}} \|\nabla \left(u^{\frac{p+q-2}{p}}\right)\|_{L^p(B_R)}^{p(1-\theta)\frac{\s+q-1}{\s+p+q-2}},
\end{aligned}
\end{equation}
where \begin{equation}\label{eq712}
\tilde{C}=\left(\frac{1}{C_{s,p}}\right)^{p(1-\theta)\frac{\s+q-1}{\s+p+q-2}}.
\end{equation}
Let us now fix $\alpha\in (0,1)$ such that
$$
p-p\alpha=p(1-\theta)\frac{\s+q-1}{\s+p+q-2}.
$$
Hence, we have
\begin{equation}\label{eq76}
\alpha=\frac{p-1}{\s}.
\end{equation}
By substituting \eqref{eq76} into \eqref{eq710} we obtain
\begin{equation}\label{eq77}
\begin{aligned}
\frac{1}{q}\frac{d}{dt}\|u(t)\|_{L^q(B_R)}^{q} &\le -(q-1)\left(\frac{p}{p+q-2}\right)^p c_1\,C_p^p \left\| u\right\|^{p+q-2}_{L^{p+q-2}(B_R)}\\
& -  \frac{1}{\tilde C}\left\{ (q-1)\left(\frac{p}{p+q-2}\right)^pC  - \|u\|_{L^{\s\frac{N}{p}}(B_R)}^{\frac{\s(\s+q-1)-(p-1)(p+q-2)}{\s+p+q-2}}\right\}  \\
&\times\left\| u\right\|^{\alpha(p+q-2)}_{L^{p+q-2}(B_R)} \left\|\nabla\left( u^{\frac{p+q-2}{p}}\right)\right\|_{L^p(B_R)}^{p-p\alpha},
\end{aligned}
\end{equation}
where $C$ has been defined in Remark \ref{rem1}. Observe that, due to hypotheses \eqref{eq713} and by the continuity of the solution $u(t)$, there exists $t_0>0$ such that
$$
\left\| u(t)\right\|_{L^{\s\frac N{p}}(B_R)}\le 2\, \tilde\varepsilon_1\,\,\,\,\,\text{for any}\,\,\,\, t\in (0,t_0]\,.
$$
Hence, \eqref{eq77} becomes, for any $t\in (0,t_0]$
\begin{equation*}
\begin{aligned}
\frac{1}{q}\frac{d}{dt}\|u(t)\|_{L^q(B_R)}^{q} &\le  -(q-1)\left(\frac{p}{p+q-2}\right)^p c_1C_p^p \left\| u\right\|^{p+q-2}_{L^{p+q-2}(B_R)}\, \\
& -  \frac{1}{\tilde C}\left\{ (q-1)\left(\frac{p}{p+q-2}\right)^pC  -2\tilde \varepsilon_1^{\frac{\s(\s+q-1)-(p-1)(p+q-2)}{\s+p+q-2}}\right\}  \left\| u\right\|^{\alpha(p+q-2)}_{L^{p+q-2}(B_R)} \left\|\nabla\left( u^{\frac{p+q-2}{p}}\right)\right\|_{L^p(B_R)}^{p-p\alpha}\\
&\\
&\le 0\,,
\end{aligned}
\end{equation*}
 provided $\tilde\varepsilon_1$ is small enough. Hence we have proved that $\|u(t)\|_{L^q(B_R)}$ is decreasing in time for any $t\in (0,t_0]$, thus
\begin{equation}\label{eq78}
\|u(t)\|_{L^q(B_R)}\le \|u_0\|_{L^q(B_R)}\quad \text{for any} \,\,\,t\in (0,t_0]\,.
\end{equation}
In particular, inequality \eqref{eq78} holds $q=\s\frac N{p}$. Hence we have
$$
\|u(t)\|_{L^{\s\frac N{p}}(B_R)}\le \|u_0\|_{L^{\s\frac N{p}}(B_R)}\,<\,\tilde\varepsilon_1\quad \text{for any} \,\,\,\,t\in (0,t_0]\,.
$$
Now, we can repeat the same argument in the time interval $(t_0, t_1]$ where $t_1$ is chosen, by using the continuity of $u(t)$, in such a way that
$$
\left\| u(t)\right\|_{L^{\s\frac N{p}}(B_R)}\le 2\, \tilde\varepsilon_1\,\,\,\,\,\text{for any}\,\,\, t\in (t_0,t_1]\,.
$$
Thus we get
\begin{equation*}
\|u(t)\|_{L^{q}(B_R)}\le \|u_0\|_{L^q(B_R)}\quad \text{for any} \,\,\,t\in (0,t_1]\,.
\end{equation*}
Iterating this procedure we obtain the thesis.

\end{proof}

\subsection{$L^{q_0}-L^q$ estimate for $\s>p-1$}
Using a Moser type iteration procedure we prove the following result:

\begin{proposition}\label{prop71}
Assume \eqref{costanti} and, besides, that $p>2$. Let $M$ be such that \eqref{S} and \eqref{P} hold. Suppose that $u_0\in L^{\infty}(B_R)$, $u_0\ge0$. Let $u$ be the solution of problem \eqref{eq111} in the sense of Definition \ref{def31} such that in addition $u\in C([0,T],L^q(B_R))$ for any $q\in(1,+\infty)$, for any $T>0$. Let $1< q_0\le q<+\infty$ and assume that
\begin{equation}\label{eq711}
\|u_0\|_{L^{\s\frac N{p}}}(B_R)<\tilde{\varepsilon}_1
\end{equation}
for $\tilde{\varepsilon}_1=\tilde{\varepsilon}_1(\s,p,N,C_{s,p},C_p,q,q_0)$ sufficiently small. Then there exists $C(p,q_0,C_{s,p}, \tilde\varepsilon_1, N, q)>0$ such that
\begin{equation}\label{eq715}
\|u(t)\|_{L^q(B_R)} \le C\,t^{-\gamma_q}\|u_0\|^{\delta_q}_{L^{q_0}(B_R)}\quad \textrm{ for all }\,\, t>0\,,
\end{equation}
where
\begin{equation}\label{eq716}
\gamma_q=\frac{q_0}{p-2}\left(\frac{1}{q_0}-\frac{1}{q}\right)\,,\quad \delta_q=\frac{q_0}{q}\,.
\end{equation}
\end{proposition}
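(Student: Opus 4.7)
The plan is to run a Moser-type iteration analogous to Proposition \ref{prop42}, but built on the \emph{arithmetic} sequence $q_m = q_0 + m(p-2)$ from \eqref{eq70} rather than on the geometric sequence used in the Sobolev-only case. The point is that under the Poincar\'e inequality \eqref{P}, the energy inequality produced by multiplying equation \eqref{eq111} by $u^{q_{n-1}-1}$ controls $\|u\|_{L^{p+q_{n-1}-2}}^{p+q_{n-1}-2}$ (rather than just $\|\nabla(u^{(p+q_{n-1}-2)/p})\|_{L^p}^p$), so one naturally climbs by $p-2$ exponent units at each step. The starting smallness \eqref{eq711} together with Lemma \ref{lemma71} (applied at $q=\sigma N/p$) guarantees that $\|u(t)\|_{L^{\sigma N/p}(B_R)}$ stays below the threshold for all $t>0$, so the reaction term can be absorbed at every stage of the iteration.

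Fix $q\ge q_0$ and let $\bar m$ be the first index with $q_{\bar m}\ge q$. Partition $[0,t]$ using $t_n=(2^n-1)r$ with $r=t/(2^{\bar m}-1)$, as in \eqref{eq48}. For each $1\le n\le \bar m$, multiply \eqref{eq111} by $u^{q_{n-1}-1}$, integrate over $B_R\times[t_{n-1},t_n]$, integrate by parts, and invoke the computations already performed in the proof of Lemma \ref{lemma71}. Using \eqref{eq711}, Lemma \ref{lemma71} (so that $\|u(\tau)\|_{L^{\sigma N/p}(B_R)}\le \tilde\varepsilon_1$ at all times), and dropping the nonpositive gradient term on the right, one obtains an inequality of the form
\begin{equation*}
\frac{1}{q_{n-1}}\bigl[\|u(t_n)\|_{L^{q_{n-1}}(B_R)}^{q_{n-1}}-\|u(t_{n-1})\|_{L^{q_{n-1}}(B_R)}^{q_{n-1}}\bigr] \le -K_{n-1}\,(t_n-t_{n-1})\,\|u(t_n)\|_{L^{q_n}(B_R)}^{q_n},
\end{equation*}
where $q_n=q_{n-1}+p-2$ and $K_{n-1}>0$ is the combined coefficient coming from $(q_{n-1}-1)(p/(p+q_{n-1}-2))^pC_p^p$ minus the small reaction contribution. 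Discarding the nonnegative term at $t_n$ on the left and using $t_n-t_{n-1}=2^{n-1}t/(2^{\bar m}-1)$, this rearranges to
\begin{equation*}
\|u(t_n)\|_{L^{q_n}(B_R)}^{q_n}\le D_{n-1}\,\frac{2^{\bar m}-1}{2^{n-1}\,t}\,\|u(t_{n-1})\|_{L^{q_{n-1}}(B_R)}^{q_{n-1}},
\end{equation*}
with $D_{n-1}=1/(q_{n-1}K_{n-1})$.

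Setting $U_n:=\|u(t_n)\|_{L^{q_n}(B_R)}$, the recursion reads $U_n^{q_n}\le \tilde D_n\,t^{-1}\,U_{n-1}^{q_{n-1}}$ for suitable $\tilde D_n$. A standard argument (minimizing a continuous positive function on the compact range $[q_0,q_{\bar m}]$, as in \eqref{eq415}--\eqref{eq418}) bounds the $\tilde D_n$ uniformly in $n$. Iterating from $n=\bar m$ down to $n=0$ gives
\begin{equation*}
U_{\bar m}\le C\,t^{-\bar m/q_{\bar m}}\,U_0^{q_0/q_{\bar m}},
\end{equation*}
and since $q_{\bar m}=q_0+\bar m(p-2)$ we have $\bar m/q_{\bar m}=\tfrac{q_0}{p-2}(1/q_0-1/q_{\bar m})$, which exactly matches the $\gamma$-exponent of \eqref{eq716} at level $q_{\bar m}$. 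If $q_{\bar m}=q$ the proof is complete; otherwise, since $q_0\le q<q_{\bar m}$, interpolate
\begin{equation*}
\|u(t)\|_{L^q(B_R)}\le \|u(t)\|_{L^{q_0}(B_R)}^{\theta}\,\|u(t)\|_{L^{q_{\bar m}}(B_R)}^{1-\theta},\qquad \theta=\frac{q_0}{q}\,\frac{q_{\bar m}-q}{q_{\bar m}-q_0},
\end{equation*}
control the first factor by Lemma \ref{lemma71}, and verify by direct algebra that the resulting time exponent collapses to $\gamma_q=\tfrac{q_0}{p-2}(1/q_0-1/q)$ and the datum exponent to $\delta_q=q_0/q$.

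The main obstacle I anticipate is the uniform control of the constants $\tilde D_n$: because the arithmetic sequence $q_n$ grows only linearly, $\bar m$ is of order $(q-q_0)/(p-2)$, and without a geometric cushion (unlike Proposition \ref{prop42}) one must check carefully that the factor $(p/(p+q_{n-1}-2))^p(q_{n-1}-1)$ stays bounded below by a strictly positive minimum on $[q_0,q_{\bar m}]$ after the reaction correction has been subtracted. This is precisely where the smallness threshold $\tilde\varepsilon_1$ must be calibrated to $q$ (via the minimum appearing in the definition of $\tilde\varepsilon_1$ in Remark \ref{rem1}), and where one uses the monotonicity of $t\mapsto\|u(t)\|_{L^{\sigma N/p}(B_R)}$ from Lemma \ref{lemma71} to ensure the reaction term can be absorbed uniformly along the full iteration.
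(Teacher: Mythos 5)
Your proposal matches the paper's proof: the arithmetic iteration $q_m=q_0+m(p-2)$, the dyadic time partition $t_m=(2^m-1)t/(2^{\bar m}-1)$, the use of Lemma \ref{lemma71} both to keep $\|u(\cdot,\tau)\|_{L^{\sigma N/p}(B_R)}$ below the smallness threshold and to replace $\int_{t_{m-1}}^{t_m}\|u(\cdot,\tau)\|_{L^{q_m}(B_R)}^{q_m}d\tau$ by $\|u(\cdot,t_m)\|_{L^{q_m}(B_R)}^{q_m}|t_m-t_{m-1}|$, the uniform bounding of the recursion constants over the compact range $[q_0,q_{\bar m}]$ (via the function $h(x)=(p+x-2)^p/(x(x-1))$), and the final interpolation are all exactly as in the paper. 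One small bookkeeping caveat: the absorption of the reaction term does not proceed by subtracting a small quantity from the full Poincar\'e coefficient $(q_{n-1}-1)(p/(p+q_{n-1}-2))^pC_p^p$; rather, as in the proof of Lemma \ref{lemma71}, the gradient term is split with weights $c_1+c_2=1$, the $c_1$ piece furnishes the pure Poincar\'e term that drives the iteration, and the $c_2$ piece (interpolated with $\alpha=(p-1)/\sigma$ so that the exponents on $\|u\|_{L^{p+q-2}}$ and $\|\nabla(u^{(p+q-2)/p})\|_{L^p}$ match those produced by the H\"older--Sobolev bound on the reaction) combines with the reaction into a nonpositive quantity that is then dropped, so the surviving coefficient is $c_1(q_{n-1}-1)(p/(p+q_{n-1}-2))^pC_p^p$ — but this changes nothing in the downstream computation.
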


\begin{proof}
Arguing as in  the proof of Proposition \ref{prop42}, let $\{q_m\}$ be the sequence defined in \eqref{eq70}. Let $\overline m$ be the first index such that $q_{\overline m}\ge q$. Observe that $\bar m$ is well defined in view of the mentioned properties of $\{q_m\}$, see \eqref{eq70}. We start by proving a smoothing estimate from $q_0$ to $q_{\overline m}$ using again a Moser iteration technique. Afterwords, if $q_{\overline m} \equiv q$ then the proof is complete. Otherwise, if $q_{\overline m} > q$ then, by interpolation, we get the thesis.

\noindent Let $t>0$, we define
\begin{equation}\label{eq717}
r=\frac{t}{2^{\overline m}-1} , \quad t_m=(2^m-1)r\,.
\end{equation}
Observe that
$$t_0=0, \quad t_{\overline m}=t,\quad \{t_m\}\,\text{ is an increasing sequence w.r.t.}\,\,m.$$
\smallskip

\noindent Now, for any $1\le m\le \overline m$, we multiply equation \eqref{eq111} by $u^{q_{m-1}-1}$ and integrate in $B_R\times[t_{m-1},t_{m}]$. Thus we get
$$
\begin{aligned}
\int_{t_{m-1}}^{t_{n}}\int_{B_R}u_t\,u^{q_{m-1}-1}\,d\mu\,d\tau &+\int_{t_{m-1}}^{t_{m}}\int_{B_R}  \dive\left(|\nabla u^{p-2}|\nabla u\right)\,u^{q_{m-1}-1} \,d\mu\,d\tau\\
&\,\,\,= \int_{t_{m-1}}^{t_{m}}\int_{B_R} T_k(u^\sigma)\,u^{q_{m-1}-1}\,d\mu\,d\tau.
\end{aligned}
$$
Then we integrate by parts in $B_R\times[t_{m-1},t_{m}]$, hence we get
\begin{equation*}\label{eq718}
\begin{aligned}
\frac{1}{q_{m-1}}&\left[\|u(\cdot, t_{m})\|^{q_{m-1}}_{L^{q_{m-1}}(B_R)}-\|u(\cdot, t_{m-1})\|^{q_{m-1}}_{L^{q_{m-1}}(B_R)}\right]\\&\le - (q_{m-1}-1)\left(\frac{p}{p+q_{m-1}-2}\right)^p\int_{t_{m-1}}^{t_{m}}  \int_{B_R} \left|\nabla\left( u^{\frac{p+q_{m-1}-2}{p}}\right)\right|^p \,d\mu\,d\tau\\
&\quad\quad+\int_{t_{m-1}}^{t_{m}}\int_{B_R}  u^\sigma\,u^{q_{m-1}-1}\,d\mu\,d\tau.
\end{aligned}
\end{equation*}
where we have made use of inequality
$$
T_k(u^\sigma)\,\le\,u^\sigma.
$$
Now, by arguing as in the proof of Lemma \ref{lemma71}, by using \eqref{eq79} and \eqref{eq74} with $q=q_{m-1}$, we get
\begin{equation*}\label{eq719}
\begin{aligned}
&\int_{B_R}  \left|\nabla\left( u^{\frac{p+q_{m-1}-2}{p}}\right)\right|^pd\mu\\
&\quad\quad\quad \ge c_1C_p^p \left\| u\right\|^{p+q_{m-1}-2}_{L^{p+q_{m-1}-2}(B_R)} + c_2C_p^{p\alpha} \left\| u\right\|^{\alpha(p+q_{m-1}-2)}_{L^{p+q_{m-1}-2}(B_R)} \left\|\nabla\left( u^{\frac{p+q_{m-1}-2}{p}}\right)\right\|_{L^p(B_R)}^{p-p\alpha}
\end{aligned}
\end{equation*}
where $\alpha\in(0,1)$ and $c_1>0$, $c_2>0$ with $c_1+c_2=1$. Similarly, from \eqref{eq75} with $q=q_{m-1}$ we can write
\begin{equation*}\label{eq720}
\begin{aligned}
\int_{B_R}u^\sigma u^{q_{m-1}-1}d\mu&=\|u\|_{L^{p+q_{m-1}-1}(B_R)}^{\s+q_{m-1}-1}\\
&\le \|u\|_{L^{p+q_{m-1}-2}(B_R)}^{\theta(\s+q_{m-1}-1)}\,\|u\|_{L^{\s\frac{N}{p}}(B_R)}^{(1-\theta)\frac{\s(\s+q_{m-1}-1)}{\s+p+q_{m-1}-2}} \\
&\quad \times\left(\frac{1}{C_{s,p}}\left\|\nabla(u^{\frac{p+q_{m-1}-2}{p}})\right\|_{L^p(B_R)}\right)^{p(1-\theta)\frac{\s+q_{m-1}-1}{\s+p+q_{m-1}-2}}
\end{aligned}
\end{equation*}
where $\theta:=\frac{(p-1)(p+q_{m-1}-2)}{\s(\s+q_{m-1}-1)}$.
Now, due to assumption \eqref{eq713}, the continuity of $u$, by choosing $\tilde C$ and $\alpha$ as in \eqref{eq712} and \eqref{eq76} respectively,
we can argue as in the proof of Lemma \ref{lemma71} (see \eqref{eq77}), hence we obtain
\begin{equation}\label{eq723}
\begin{aligned}
\frac{1}{q_{m-1}}&\left[\|u(\cdot, t_{m})\|^{q_{m-1}}_{L^{q_{m-1}}(B_R)}-\|u(\cdot, t_{m-1})\|^{q_{m-1}}_{L^{q_{m-1}}(B_R)}\right]\\
&\le-(q_{m-1}-1)\left(\frac{p}{p+q_{m-1}-2}\right)^pc_1C_p^p \int_{t_{m-1}}^{t_{m}}  \left\| u(\cdot, \tau)\right\|^{p+q_{m-1}-2}_{L^{p+q_{m-1}-2}(B_R)} d\tau \\
& -  \frac{1}{\tilde C}\left\{(q_{m-1}-1)\left(\frac{p}{p+q_{m-1}-2}\right)^pC\,  - 2\tilde{\varepsilon_1}^{\frac{\s(\s+q_{m-1}-1)-(p-1)(p+q_{m-1}-2)}{\s+p+q_{m-1}-2}}\right\}  \\
&\times \int_{t_{m-1}}^{t_m}\left\| u(\cdot,\tau)\right\|^{\alpha(p+q_{m-1}-2)}_{L^{p+q_{m-1}-2}(B_R)} \left\|\nabla\left( u^{\frac{p+q_{m-1}-2}{p}}\right)(\cdot,\tau)\right\|_{L^p(B_R)}^{p-p\alpha}\,d\tau,
\end{aligned}
\end{equation}
where $C$ has been defined in Remark \ref{rem1}. Finally, provided $\tilde\varepsilon_1$ is small enough, \eqref{eq723} can be rewritten as
\begin{equation*}
\begin{aligned}
\frac{1}{q_{m-1}}&\left[\|u(\cdot, t_{m})\|^{q_{m-1}}_{L^{q_{m-1}}(B_R)}-\|u(\cdot, t_{m-1})\|^{q_{m-1}}_{L^{q_{m-1}}(B_R)}\right]\\
&\quad\quad\quad\le-(q_{m-1}-1)\left(\frac{p}{p+q_{m-1}-2}\right)^pc_1C_p^p \int_{t_{m-1}}^{t_{m}}  \left\| u(\cdot, \tau)\right\|^{p+q_{m-1}-2}_{L^{p+q_{m-1}-2}(B_R)} d\tau.
\end{aligned}
\end{equation*}

\noindent We define $q_m$ as in \eqref{eq70}, so that $q_m=p+q_{m-1}-2$. Then, in view of hypothesis \eqref{eq711}, we can apply Lemma \ref{lemma71} to the integral in the right-hand side of the latter, hence we get
\begin{equation}\label{eq724}
\begin{aligned}
\frac{1}{q_{m-1}}&\left[\|u(\cdot, t_{m})\|^{q_{m-1}}_{L^{q_{m-1}}(B_R)}-\|u(\cdot, t_{m-1})\|^{q_{m-1}}_{L^{q_{m-1}}(B_R)}\right]\\
&\quad\quad\quad\le-(q_{m-1}-1)\left(\frac{p}{p+q_{m-1}-2}\right)^pc_1C_p^p\left\| u(\cdot,t_m)\right\|^{q_m}_{L^{q_m}(B_R)} |t_m-t_{m-1}|.
\end{aligned}
\end{equation}
Observe that
\begin{equation}\label{eq725}
\begin{aligned}
&\|u(\cdot, t_{m})\|^{q_{m-1}}_{L^{q_{m-1}}(B_R)}\,\ge\,0,\\
&|t_m-t_{m-1}|=\frac{2^{m-1}t}{2^{\overline m}-1}.
\end{aligned}
\end{equation}
We define
\begin{equation}\label{eq726}
d_{m-1}:=\left(\frac{p}{p+q_{m-1}-2}\right)^{-p}\frac 1{c_1\,C_p^p}\frac{1}{q_{m-1}(q_{m-1}-1)}.
\end{equation}
By plugging \eqref{eq725} and \eqref{eq726} into \eqref{eq724}, we get
$$
\left\| u(\cdot,t_m)\right\|^{q_m}_{L^{q_m}_{\rho}(B_R)}\,\le \,\frac{2^{\bar m}-1}{2^{m-1}t}\,d_{m-1}\|u(\cdot,t_{m-1})\|^{q_{m-1}}_{L^{q_{m-1}}_{\rho}(B_R)}.
$$
The latter can be rewritten as
\begin{equation}\label{eq724b}
\left\| u(\cdot,t_m)\right\|_{L^{q_m}(B_R)}\,\le \,\left(\frac{2^{\bar m}-1}{2^{m-1}}\,d_{m-1}\right)^{\frac 1{q_m}}t^{-\frac1{q_m}}\|u(\cdot,t_{m-1})\|^{\frac{q_{m-1}}{q_m}}_{L^{q_{m-1}}(B_R)}
\end{equation}
Observe that, for any $1\le m\le \bar m$, we have
\begin{equation}\label{eq727}
\begin{aligned}
\frac{2^{\bar m}-1}{2^{m-1}}\,d_{m-1}&=\frac{2^{\bar m}-1}{2^{m-1}}\left(\frac{p}{p+q_{m-1}-2}\right)^{-p}\frac 1{c_1\,C_p^p}\frac{1}{q_{m-1}(q_{m-1}-1)}\\
&\le 2^{\bar m+1}\frac{1}{c_1\,C_p^p}\left(\frac{p+q_{m-1}-2}{p}\right)^{p}\frac{1}{q_{m-1}(q_{m-1}-1)}.
\end{aligned}
\end{equation}
Consider the function
$$
h(x):=\frac {(p+x-2)^p}{x(x-1)}, \quad \text{for}\,\,\,q_0\le x\le q_{\overline m},\quad x\in\R.
$$
Observe that $h(x)\ge0$ for any $q_0\le x\le q_{\overline m}$. Moreover, $h$ has a maximum in the interval $q_0\le x\le q_{\overline m}$, call $\tilde{x}$ the point at which it is attained. Hence
\begin{equation}\label{eq728}
h(x)\le h(\tilde x)\quad \text{for any}\,\,\,q_0\le x\le q_{\overline m},\quad x\in\R.
\end{equation}
Due to \eqref{eq727} and \eqref{eq728}, we can say that there exists a positive constant $C$, where $C=C(C_p,\bar m,p,q_0)$, such that
\begin{equation}\label{eq729}
\frac{2^{\overline m}-1}{2^{m-1}}\,d_{m-1}\le C\quad \text{for all}\,\,1\le m\le \overline m.
\end{equation}
By using \eqref{eq729} and \eqref{eq724b}, we get, for any $1\le m\le \overline m$
\begin{equation}\label{eq730}
\left\| u(\cdot,t_m)\right\|_{L^{q_m}(B_R)}\,\le\, C^{\frac1{q_m}}t^{-\frac1{q_m}}\|u(\cdot,t_{m-1})\|^{\frac{q_{m-1}}{q_m}}_{L^{q_{m-1}}(B_R)}.
\end{equation}
Let us set
$$
U_m:=\left\| u(\cdot,t_m)\right\|_{L^{q_m}(B_R)}
$$
Then \eqref{eq730} becomes
$$
\begin{aligned}
U_m&\le C^{\frac1{q_m}}t^{-\frac1{q_m}}U_{n-1}^{\frac{q_{m-1}}{q_m}}\\
&\le C^{\frac1{q_m}}t^{-\frac1{q_m}}\left[C^{\frac1{q_{m-1}}}t^{-\frac1{q_{m-1}}}U_{m-2}^{\frac{q_{m-2}}{q_{m-1}}}\right]\\
&\le ...\\
&\le C^{\frac m{q_m}}t^{-\frac m{q_m}}U_0^{\frac{q_0}{q_m}}.
\end{aligned}
$$
We define
\begin{equation}\label{eq731}
\alpha_m:=\frac m{q_m},\quad \delta_m:=\frac{q_0}{q_m}.
\end{equation}
Substituting $m$ with $\bar m$ into \eqref{eq731} and in view of \eqref{eq717}, \eqref{eq730} with $m=\overline m$, we have
\begin{equation*}\label{eq732}
\left\| u(\cdot,t)\right\|_{L^{q_{\overline m}}(B_R)}\,\le\, C^{\alpha_{\overline m}}t^{-\alpha_{\overline m}}\left\| u_0\right\|_{L^{q_{0}}(B_R)}^{\delta_{\overline m}}.
\end{equation*}
Observe that if $q_{\overline m}=q$ then the thesis is proved and one has
$$
\alpha_{\overline m}=\frac1{p-2}\left(1-\frac{q_0}{q}\right),\quad \delta_{\overline m}=\frac{q_0}{q}.
$$
Now suppose that $q<q_{\overline m}$, then in particular $q_0\le q\le q_{\overline m}$. By interpolation and Lemma \ref{lemma71} we get
\begin{equation}\label{eq733}
\begin{aligned}
\left\| u(\cdot,t)\right\|_{L^{q}(B_R)}&\le \left\| u(\cdot,t)\right\|_{L^{q_{0}}(B_R)}^{\theta}\left\| u(\cdot,t)\right\|_{L^{q_{\overline m}}(B_R)}^{1-\theta}\\
& \left\| u(\cdot,t)\right\|_{L^{q_{0}}(B_R)}^{\theta}\, C^{\alpha_{\overline m}(1-\theta)}t^{-\alpha_{\overline m}(1-\theta)}\left\| u_0\right\|_{L^{q_{0}}(B_R)}^{\delta_{\overline m}(1-\theta)}\\
&\le C^{\alpha_{\overline m}(1-\theta)}t^{-\alpha_{\overline m}(1-\theta)}\left\| u_0\right\|_{L^{q_{0}}(B_R)}^{\delta_{\overline m}(1-\theta)+\theta},
\end{aligned}
\end{equation}
where
\begin{equation}\label{eq734}
\theta=\frac{q_0}{q}\left(\frac{q_{\overline m}-q}{q_{\overline m}-q_0}\right).
\end{equation}
Combining \eqref{eq716}, \eqref{eq733} and \eqref{eq734}, we get the claim by noticing that $q$ was arbitrary fixed in $[q_0,+\infty)$.

\end{proof}

\section{Auxiliary results}\label{aux}

In what follows, we will deal with solutions $u_R$ to problem \eqref{eq111} for arbitrary fixed $R>0$. For notational convenience, we will simply write $u$ instead of $u_R$ since no confusion will occur in the present section. We define
\begin{equation*}\label{31b}
G_k(v):=v-T_k(v).
\end{equation*}
where $T_k(v)$ has been defined in \eqref{31}. Let $a_1>0$, $a_2>0$ and $t>\tau_1>\tau_2>0$. We consider, for any $i\in\mathbb N\cup\{0\}$, the sequences
\begin{equation}\label{eq32}
\begin{aligned}
&k_i:=a_2+(a_1-a_2)2^{-i}\,;\\
&\theta_i:=\tau_2+(\tau_1-\tau_2)2^{-i}\,;
\end{aligned}
\end{equation}
and the cylinders
\begin{equation}\label{eq32b}
U_i:=B_R\times(\theta_i,t).
\end{equation}
Observe that the sequence $\{\theta_i\}_{i\in \mathbb{N}}$ is monotone decreasing w.r.t. $i$. Furthermore, we define, for any $i\in\mathbb{N}$, the cut-off functions $\xi_i(\tau)$ such that
\begin{equation}\label{eq33}
\xi_i(\tau):=\begin{cases} 1\quad &\theta_{i-1}<\tau<t\\ 0\quad &0<\tau<\theta_i \end{cases}\quad\quad\text{and}\quad\quad|(\xi_i)_{\tau}|\,\le\, \frac{2^i}{\tau_1-\tau_2}\,.
\end{equation}
Finally, we define
\begin{equation}\label{eq34}
S(t):=\sup_{0<\tau<t}\left(\tau\|u(\tau)\|_{L^{\infty}(B_R)}^{\s-1}\right).
\end{equation}

We can now state the following
\begin{lemma}\label{lemma1}
Let $i\in\mathbb{N}$, $k_i$, $\theta_i$, $U_i$ be defined in \eqref{eq32}, \eqref{eq32b} and $R>0$. Let $u$ be a solution to problem \eqref{eq111}. Then, for any $q>1$, we have that
\begin{equation*}\label{eq35}
\sup_{\tau_1<\tau<t}\int_{B_R}[G_{k_0}(u)]^q\,d\mu + \iint_{U_{i-1}}\left|\nabla[G_{k_i}(u)]^{\frac{p+q-2}{p}}\right|^p\,d\mu d\tau\le 2^i\gamma\,C_1\iint_{U_i}[G_{k_{i+1}}(u)]^q\,d\mu d\tau.
\end{equation*}
where $\gamma=\gamma(p,q)$ and
\begin{equation}\label{C1}C_1:=\frac{1}{\tau_1-\tau_2}+\frac{S(t)}{\tau_1}\frac{2a_1}{a_1-a_2}.\end{equation}
\end{lemma}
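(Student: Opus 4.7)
My plan is to carry out a De Giorgi / Caccioppoli-type energy estimate at the level sets $\{u>k_i\}$: I would test the weak formulation of \eqref{eq111} against $\varphi = [G_{k_i}(u)]^{q-1}\,\xi_i(\tau)$ on $B_R\times(\theta_i,s)$ for arbitrary $s\in(\theta_{i-1},t]$. This $\varphi$ is admissible after a standard Steklov regularization, since $u$ is bounded and $T_k(u^\sigma)$ is Lipschitz, so all of the manipulations below can be justified rigorously by a limiting argument.

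Three contributions will arise. The parabolic piece rewrites as $\frac{1}{q}\partial_\tau[G_{k_i}(u)]^q$; integrating by parts in $\tau$ together with $\xi_i(\theta_i)=0$ gives the boundary term $\frac{\xi_i(s)}{q}\int_{B_R}[G_{k_i}(u(s))]^q\,d\mu$ and the cutoff error $\frac{1}{q}\iint[G_{k_i}(u)]^q|\xi_i'|$, which I would control using $|\xi_i'|\le 2^i/(\tau_1-\tau_2)$ and the trivial inequality $G_{k_i}(u)\le G_{k_{i+1}}(u)$ (from $k_{i+1}\le k_i$), thus producing the first summand of $C_1$ times $2^i\iint_{U_i}[G_{k_{i+1}}(u)]^q$. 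For the diffusion piece I would integrate by parts in space and use $\nabla G_{k_i}(u)=\chi_{\{u>k_i\}}\nabla u$ together with the chain-rule identity $[G_{k_i}(u)]^{q-2}|\nabla G_{k_i}(u)|^p=(p/(p+q-2))^p\,|\nabla[G_{k_i}(u)]^{(p+q-2)/p}|^p$; this yields, up to a constant $\gamma(p,q)>0$, exactly the gradient integral on the left-hand side of the claim once one uses $\xi_i\equiv 1$ on $U_{i-1}$.

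The main work is the reaction term. From $T_k(u^\sigma)\le u^\sigma\le \|u(\tau)\|_{L^\infty}^{\sigma-1}u$ and $\|u(\tau)\|_{L^\infty}^{\sigma-1}\le S(t)/\tau$ (by the definition of $S$), I would exploit the geometric spacing $k_i-k_{i+1}=(a_1-a_2)2^{-(i+1)}$: on $\{u>k_i\}$ this forces $G_{k_{i+1}}(u)\ge (a_1-a_2)2^{-(i+1)}$, so $[G_{k_{i+1}}(u)]^{q-1}\le (2^{i+1}/(a_1-a_2))[G_{k_{i+1}}(u)]^q$. Combined with $G_{k_i}(u)\le G_{k_{i+1}}(u)$ and $u=G_{k_{i+1}}(u)+k_{i+1}\le G_{k_{i+1}}(u)+a_1$, this delivers the pointwise bound $u\,[G_{k_i}(u)]^{q-1}\le C\,\frac{2^i a_1}{a_1-a_2}\,[G_{k_{i+1}}(u)]^q$. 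Integrating in time while pulling out the factor $S(t)/\tau$ then yields the second summand of $C_1$ times $2^i\iint_{U_i}[G_{k_{i+1}}(u)]^q$.

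It then suffices to collect the three pieces, absorb the gradient and boundary terms on the left, take $\sup_{s\in(\theta_{i-1},t)}$ of the boundary contribution (which dominates $\sup_{s\in(\tau_1,t)}$ because $\theta_{i-1}\le\theta_0=\tau_1$), and use $G_{k_0}(u)\le G_{k_i}(u)$ to recognize the desired $G_{k_0}$ form. The principal obstacle is the reaction estimate: the level-set trade of $[G_{k_i}]^{q-1}$ for $[G_{k_{i+1}}]^q$ at the cost of a $2^i$ factor, enabled precisely by the geometric spacing of the $k_i$, is what the downstream Moser-type iteration is designed to absorb, and the sharp appearance of $2^i a_1/(a_1-a_2)$ in $C_1$ reflects this balance.
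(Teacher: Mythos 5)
Your proposal follows essentially the same route as the paper: test with $[G_{k_i}(u)]^{q-1}\xi_i$, split into parabolic, diffusion and reaction contributions, control the cutoff error via $|\xi_i'|\le 2^i/(\tau_1-\tau_2)$ and $G_{k_i}(u)\le G_{k_{i+1}}(u)$, rewrite the $p$-Laplacian term via the chain rule, bound the reaction using $T_k(u^\sigma)\le u^\sigma\le \|u(\tau)\|_{L^\infty}^{\sigma-1}u$ and $S(t)$, and exploit the geometric spacing $k_i-k_{i+1}=(a_1-a_2)2^{-(i+1)}$ to convert a $(q-1)$-power into a $q$-power at the cost of a $2^i$ factor. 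The only cosmetic difference is in the reaction step: you decompose $u=G_{k_{i+1}}(u)+k_{i+1}\le G_{k_{i+1}}(u)+a_1$ and then use the level-set lower bound $G_{k_{i+1}}(u)\ge(a_1-a_2)2^{-(i+1)}$ on $\{u>k_i\}$, while the paper uses the single-step inequality $\tfrac{u}{k_i}\chi_{\{u\ge k_i\}}\le\tfrac{u-k_{i+1}}{k_i-k_{i+1}}\chi_{\{u\ge k_i\}}$, so that $u[G_{k_{i+1}}(u)]^{q-1}\le\tfrac{k_i}{k_i-k_{i+1}}[G_{k_{i+1}}(u)]^q$; both give the same $C_1$ up to an absolute constant absorbable into $\gamma(p,q)$.
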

\begin{proof}
For any $i\in\mathbb{N}$, we multiply both sides of the differential equation in problem \eqref{eq111} by $[G_{k_i}(u)]^{q-1}\xi_i$, $q>1$, and we integrate on the cylinder $U_i$, yielding:
\begin{equation}\label{eq36}
\begin{aligned}
\iint_{U_i} &u_{\tau}\,[G_{k_i}(u)]^{q-1}\xi_i\,d\mu d\tau \\
&=\iint_{U_i}  \dive(|\nabla u|^{p-2}\,\nabla u)[G_{k_i}(u)]^{q-1}\xi_i\,d\mu d\tau\,+ \iint_{U_i}  T_k(u^{\s})\,[G_{k_i}(u)]^{q-1}\xi_i\,d\mu d\tau\,.
\end{aligned}
\end{equation}
We integrate by parts. Thus we write, due to \eqref{eq33},
\begin{equation}\label{eq37}
\begin{aligned}
\iint_{U_i} u_{\tau}\,[G_{k_i}(u)]^{q-1}\xi_i\,d\mu d\tau &= \frac 1q \iint_{U_i} \frac{d}{d\tau}[(G_{k_i}(u))^{q}]\xi_i\,d\mu d\tau\\
&=-\frac 1q \iint_{U_i}[G_{k_i}(u)]^{q}(\xi_i)_{\tau}\,d\mu d\tau+\frac 1q\int_{B_R}[G_{k_i}(u(x,t))]^{q}\,d\mu
\end{aligned}
\end{equation}
Moreover,
\begin{equation}\label{eq38}
\begin{aligned}
-\iint_{U_i}  \dive(|\nabla u|^{p-2}\,\nabla u)[G_{k_i}(u)]^{q-1}&\xi_i\,d\mu d\tau=\iint_{U_i}|\nabla u|^{p-2}\,\nabla u\cdot \nabla[G_{k_i}(u)]^{q-1}\xi_i\,d\mu d\tau\\
&\ge(q-1) \iint_{U_i} [G_{k_i}(u)]^{q-2} |\nabla [G_{k_i}(u)] |^{p}\, \xi_i\,d\mu d\tau.
\end{aligned}
\end{equation}
Now, combining \eqref{eq36}, \eqref{eq37} and \eqref{eq38}, using the fact that $T(u^\sigma)\le u^\sigma$ and \eqref{eq33}, we can write
\begin{equation}\label{eq39}
\begin{aligned}
\frac 1q\int_{B_R}[G_{k_i}(u(x,t))]^{q}\,d\mu &+(q-1) \iint_{U_i} [G_{k_i}(u)]^{q-2} |\nabla [G_{k_i}(u)] |^{p}\, \xi_i\,d\mu d\tau\\
&\le \frac 1q \iint_{U_i}[G_{k_i}(u)]^{q}(\xi_i)_{\tau}\,d\mu d\tau+\iint_{U_i} u^{\s}\,[G_{k_i}(u)]^{q-1}\xi_i\,d\mu d\tau\,\\
&\le \frac{2^i}{\tau_1-\tau_2}\iint_{U_i}[G_{k_i}(u)]^{q}\,d\mu d\tau+\iint_{U_i} u^{\s}\,[G_{k_i}(u)]^{q-1}\xi_i\,d\mu d\tau.
\end{aligned}
\end{equation}
Let us define
$$
\tilde\gamma:=\left[\min\left\{\frac 1q,\,q-1\right\}\right]^{-1},
$$
thus \eqref{eq39} reads
\begin{equation}\label{eq310}
\begin{aligned}
\int_{B_R}[G_{k_i}(u(x,t))]^{q}\,d\mu &+ \iint_{U_{i}} [G_{k_i}(u)]^{q-2} |\nabla [G_{k_i}(u)] |^{p}\xi_i\,d\mu d\tau\\
&\le \tilde\gamma \frac{2^i}{\tau_1-\tau_2}\iint_{U_i}[G_{k_i}(u)]^{q}\,d\mu d\tau+\tilde\gamma \iint_{U_i} u^{\s}\,[G_{k_i}(u)]^{q-1}\xi_id\mu d\tau.
\end{aligned}
\end{equation}
Observe that the sequence $\{k_i\}_{i\in\mathbb{N}}$ is monotone decreasing, hence
$$
G_{k_0}(u)\le G_{k_i}(u)\le G_{k_{i+1}}(u)\le u\quad\quad\text{for all}\,\,\,i\in\mathbb{N}.
$$
Thus \eqref{eq310} can be rewritten as
\begin{equation}\label{eq311}
\begin{aligned}
\int_{B_R}[G_{k_0}(u(x,t))]^{q}\,d\mu &+ \iint_{U_{i-1}} [G_{k_i}(u)]^{q-2} |\nabla [G_{k_i}(u)] |^{p}\,d\mu d\tau\\
&\le \frac{2^i\,\tilde\gamma }{\tau_1-\tau_2}\iint_{U_i}[G_{k_{i+1}}(u)]^{q}\,d\mu d\tau+\tilde\gamma \iint_{U_i} u^{\s}\,[G_{k_{i+1}}(u)]^{q-1}d\mu d\tau.
\end{aligned}
\end{equation}
Let us now define
$$
I:=\tilde\gamma \iint_{U_i} u^{\s-1}\,u\,[G_{k_{i+1}}(u)]^{q-1}d\mu d\tau
$$
Observe that, for any $i\in\mathbb{N}$,
$$
\frac{u}{k_i}\chi_i\,\le\, \frac{u-k_{i+1}}{k_i-k_{i+1}}\chi_i
$$
where $\chi_i$ is the characteristic function of $D_i:=\{(x,t)\in U_i:\,u(x,t)\ge k_i\}$. Then, by using \eqref{eq34}, we get:
\begin{equation}\label{eq312}
\begin{aligned}
I&\le \tilde\gamma \int_{\theta_i}^t \frac{1}{\tau}\tau\|u(\tau)\|_{L^{\infty}(B_R)}^{\s-1}\int_{B_R}u\left[G_{k_{i+1}}(u)\right]^{q-1}\,d\mu d\tau\\
&=\tilde\gamma \int_{\theta_i}^t \frac{1}{\tau}\tau\|u(\tau)\|_{L^{\infty}(B_R)}^{\s-1} \int_{B_R}k_i\frac{u}{k_i}\left[G_{k_{i+1}}(u)\right]^{q-1}\,d\mu d\tau\\
&\le \tilde\gamma \frac{k_i}{k_i-k_{i+1}}S(t)\int_{\theta_i}^{t}\frac{1}{\tau}  \int_{B_R}\left[G_{k_{i+1}}(u)\right]^{q}\,d\mu d\tau.
\end{aligned}
\end{equation}
By substituting \eqref{eq312} into \eqref{eq311} we obtain
\begin{equation*}\label{eq313}
\begin{aligned}
\sup_{\tau_1<\tau<t} &\int_{B_R}[G_{k_0}(u(x,t))]^{q}\,d\mu + \left(\frac p{p+q-2}\right)^p\iint_{U_{i-1}}\left |\nabla [G_{k_i}(u)]^{\frac{p+q-2}{p}} \right|^{p}\,d\mu d\tau\\
&\le \frac{2^i\,\tilde\gamma }{\tau_1-\tau_2}\iint_{U_i}[G_{k_{i+1}}(u)]^{q}\,d\mu d\tau+\frac{k_i\,\tilde\gamma}{k_i-k_{i+1}}\frac{S(t)}{\theta_0} \iint_{U_i} [G_{k_{i+1}}(u)]^{q}d\mu d\tau.
\end{aligned}
\end{equation*}
To proceed further, observe that
$$
\frac{k_i}{k_i-k_{i+1}}=\frac{2^{i+1}a_2}{a_1-a_2}+2, \quad\text{and}\quad \theta_0\equiv\tau_1.
$$
Consequently, by choosing $C_1$ as in \eqref{C1}, we get
\begin{equation*}\label{eq314}
\begin{aligned}
\sup_{\tau_1<\tau<t} &\int_{B_R}[G_{k_0}(u(x,t))]^{q}\,d\mu + \left(\frac p{p+q-2}\right)^p\iint_{U_{i-1}} |\nabla [G_{k_i}(u)]^{\frac{p+q-2}{p}} |^{p}\,d\mu d\tau\\
&\le 2^i\,\tilde\gamma\, C_1\int\int_{U_i}[G_{k_{i+1}}(u)]^{q}\,d\mu d\tau.
\end{aligned}
\end{equation*}
The thesis follows, letting
\begin{equation}\label{gamma}
\gamma:=\left[\min\left\{1;\,\left(\frac{p}{p+q-2}\right)^p\right\}\right]^{-1}\tilde\gamma.
\end{equation}
\end{proof}

\begin{lemma}\label{lemma2}
Assume \eqref{costanti}, let $1<r<q$ and assume that \eqref{S} holds. Let $k_i$, $\theta_i$, $U_i$ be defined in \eqref{eq32}, \eqref{eq32b} and $R>0$. Let $u$ be a solution to problem \eqref{eq111}. Then, for every $i\in\mathbb{N}$ and $\varepsilon>0$, we have
\begin{equation*}\label{eq315}
\begin{aligned}
\sup_{\tau_1<\tau<t}&\int_{B_R}[G_{k_0}(u)]^q\,d\mu + \iint_{U_{i-1}}\left|\nabla[G_{k_i}(u)]^{\frac{p+q-2}{p}}\right|^p\,d\mu d\tau\\
&\le \varepsilon \iint_{U_i} \left|\nabla[G_{k_{i+1}}(u)]^{\frac{p+q-2}{p}}\right|^p \,d\mu d\tau\\
&+ C(\varepsilon)(2^i\gamma C_1)^{\frac{N(p+q-2-r)+pr}{N(p-2)+pr}}(t-\tau_2)\left(\sup_{\tau_2<\tau<t}\int_{B_R}[G_{k_{\infty}}(u)]^r\,d\mu\right)^{\frac{N(p-2)+pq}{N(p-2)+pr}},
\end{aligned}
\end{equation*}
with $C_1$ and $\gamma$ defined as in \eqref{C1} and \eqref{gamma} respectively and for some $C(\varepsilon)>0$.
\end{lemma}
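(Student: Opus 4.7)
The plan is to refine Lemma~\ref{lemma1} by an interpolation-plus-Sobolev argument so that the right-hand side $2^{i}\gamma C_{1}\iint_{U_{i}}[G_{k_{i+1}}(u)]^{q}\,d\mu\,d\tau$ appearing there is rewritten as the sum of an $\varepsilon$-fraction of a $p$-Dirichlet energy on $U_{i}$ (which will eventually be absorbed into the analogous gradient term on the left) and a remainder depending only on a lower $L^{r}$-norm of $G_{k_{\infty}}(u)$. The Moser-type mechanism is standard in Euclidean $p$-Laplacian theory; what must be checked is that the relevant exponents survive the geometric setting (only the Sobolev inequality \eqref{S} is used at the spatial level) and match the explicit forms claimed.

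Writing $G:=G_{k_{i+1}}(u)$ and $w:=G^{(p+q-2)/p}$, one has $w(\cdot,\tau)\in W^{1,p}_{0}(B_{R})$ for a.e.\ $\tau$ (since $u$ vanishes on $\partial B_{R}$ and $k_{i+1}\ge a_{2}>0$), so \eqref{S} yields
$$\|G\|_{L^{q_{*}}(B_{R})}\le C_{s,p}^{-p/(p+q-2)}\,\|\nabla w\|_{L^{p}(B_{R})}^{p/(p+q-2)},\qquad q_{*}:=\frac{(p+q-2)N}{N-p}.$$
A short check using \eqref{costanti} gives $q_{*}>q>r$, and hence the H\"older interpolation $\|G\|_{L^{q}}\le\|G\|_{L^{r}}^{\alpha}\|G\|_{L^{q_{*}}}^{1-\alpha}$ is available with $1/q=\alpha/r+(1-\alpha)/q_{*}$. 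Raising to the $q$-th power, inserting the Sobolev bound, integrating in $\tau\in(\theta_{i},t)$, pulling the $L^{r}$-factor out via $\sup_{\tau_{2}<\tau<t}\int_{B_{R}}[G_{k_{\infty}}(u)]^{r}d\mu$ (legitimate since $k_{i+1}\ge k_{\infty}$ and $\theta_{i}\ge\tau_{2}$), and applying H\"older in time with conjugate exponents $\lambda=(p+q-2)/[q(1-\alpha)]$ and $\lambda'=\lambda/(\lambda-1)$ gives
$$\iint_{U_{i}}G^{q}\le C\,\Bigl(\sup_{\tau_{2}<\tau<t}\!\int_{B_{R}}[G_{k_{\infty}}(u)]^{r}\Bigr)^{\!q\alpha/r}(t-\tau_{2})^{1-1/\lambda}\Bigl(\iint_{U_{i}}|\nabla w|^{p}\Bigr)^{\!1/\lambda}.$$

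Multiplying through by $2^{i}\gamma C_{1}$ and applying Young's inequality with exponents $\lambda,\lambda'$ splits the right-hand side as $\varepsilon\iint_{U_{i}}|\nabla w|^{p}+C(\varepsilon)(2^{i}\gamma C_{1})^{\lambda'}(t-\tau_{2})^{(1-1/\lambda)\lambda'}\bigl(\sup\!\int[G_{k_{\infty}}(u)]^{r}\bigr)^{q\alpha\lambda'/r}$. The identity $(1-1/\lambda)\lambda'=1$ takes care of the time factor, and the algebraic reductions
$$\alpha=\frac{r(q_{*}-q)}{q(q_{*}-r)},\quad q_{*}-q=\frac{N(p-2)+pq}{N-p},\quad q_{*}-r=\frac{N(p+q-2-r)+pr}{N-p}$$
identify $\lambda'=[N(p+q-2-r)+pr]/[N(p-2)+pr]$ and $q\alpha\lambda'/r=[N(p-2)+pq]/[N(p-2)+pr]$, matching the statement. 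The one technical point requiring care is that the time-H\"older exponent be positive, i.e.\ $qp(1-\alpha)/(p+q-2)<p$; unwinding this with the explicit $\alpha$ above reduces it to $p(N+r)>2N$, which is precisely secured by \eqref{costanti} together with $r>1$. This admissibility check, rather than the soft structure of the argument, is the main obstacle, and it is exactly where the lower bound $p>2N/(N+1)$ from \eqref{costanti} is used.
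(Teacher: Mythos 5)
Your proof is correct and follows essentially the same route as the paper: Lemma~\ref{lemma1} followed by the Hölder/interpolation split of $\int G^q$ between $L^r$ and the Sobolev level $L^{q_*}$, a time-Hölder step, and Young's inequality, with all exponents agreeing (your $\alpha$ is the paper's $\alpha/q$, your $\lambda'=\frac{p+q-2}{p+\alpha_{\mathrm{paper}}-2}$). The only genuine difference is cosmetic order of operations — you integrate in time before Young, the paper applies Young pointwise in $\tau$ first — and you add a useful explicit admissibility check ($\lambda>1$, equivalently $N(p-2)+pr>0$, secured by $p>\tfrac{2N}{N+1}$ and $r>1$) that the paper leaves implicit.
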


\begin{proof}
Let us fix $q>1$ and $1<r<q$. We define
\begin{equation}\label{eq316}
\alpha:=r\,\frac{N(p-2)+pq}{N(p+q-2-r)+pr}.
\end{equation}
Observe that, since $1<r<q$, one has $0<\alpha<q$. By H\"older inequality with exponents $\frac{pN}{N-p}\left(\frac{p+q-2}{p(q-\alpha)}\right)$ and $\frac{N(p+q-2)}{N(p+\alpha-2)+p(q-\alpha)}$, we thus have:
\begin{equation}\label{eq317}
\begin{aligned}
\int_{B_R}[G_{k_{i+1}}(u)]^q\,d\mu&= \int_{B_R}[G_{k_{i+1}}(u)]^{q-\alpha}\,d\mu+\int_{B_R}[G_{k_{i+1}}(u)]^\alpha\,d\mu\\
&\le\left(\int_{B_R}[G_{k_{i+1}}(u)]^{\left(\frac{p+q-2}{p}\right)\frac{pN}{N-p}}\,d\mu\right)^{\left(\frac{p(q-\alpha)}{p+q-2}\right)\frac{N-p}{pN}}\\
&\quad\times \left(\int_{B_R}[G_{k_{i+1}}(u)]^{\frac{\alpha N(p+q-2)}{N(p+\alpha-2)+p(q-\alpha)}}\,d\mu\right)^{\frac{N(p+\alpha-2)+p(q-\alpha)}{N(p+q-2)}}\\
&\le \left(\left\|[G_{k_{i+1}}(u)]^{\frac{p+q-2}{p}}\right\|_{L^{p^*}(B_R)}\right)^{\frac{p(q-\alpha)}{p+q-2}} \\
&\quad\times \left(\int_{B_R}[G_{k_{i+1}}(u)]^{\frac{\alpha N(p+q-2)}{N(p+\alpha-2)+p(q-\alpha)}}\,d\mu\right)^{\frac{N(p+\alpha-2)+p(q-\alpha)}{N(p+q-2)}}.
\end{aligned}
\end{equation}
By the definition of $\alpha$ in \eqref{eq316} and inequality \eqref{S}, \eqref{eq317} becomes
\begin{equation}\label{eq318}
\begin{aligned}
\int_{B_R}[G_{k_{i+1}}(u)]^q\,d\mu\le\left(\frac{1}{C_{s,p}}\left\|\nabla[G_{k_{i+1}}(u)]^{\frac{p+q-2}{p}}\right\|_{L^{p}(B_R)}\right)^{\frac{p(q-\alpha)}{p+q-2}} \left(\int_{B_R}[G_{k_{i+1}}(u)]^{r}\,d\mu\right)^{\frac\alpha r}.
\end{aligned}
\end{equation}
We multiply both sides of \eqref{eq318} by $2^i \gamma C_1$ with $C_1$ and $\gamma$ as in \eqref{C1} and \eqref{gamma}, respectively. Then, we apply Young's inequality with exponents $\frac{p+q-2}{q-\alpha}$ and $\frac{p+q-2}{p+\alpha-2}$ to get:
+
\begin{equation}\label{eq319}
\begin{aligned}
&2^i\,\gamma C_1\int_{B_R}[G_{k_{i+1}}(u)]^q\,d\mu\\
&\le \varepsilon\int_{B_R}\left|\nabla[G_{k_{i+1}}(u)]^{\frac{p+q-2}{p}}\right|^p\,d\mu+C(\varepsilon)(2^i\gamma C_1)^{\frac{p+q-2}{p+\alpha-2}}\left(\int_{B_R}[G_{k_{i+1}}(u)]^{r}\,d\mu\right)^{\frac\alpha r\frac{p+q-2}{p+\alpha-2}}
\end{aligned}
\end{equation}
Define
$$
\lambda:=\frac\alpha r\left(\frac{p+q-2}{p+\alpha-2}\right)=\frac{N(p-2)+pq}{N(p-2)+pr}.
$$
Observe that $\lambda>1$ since $r<q$. By Lemma \ref{lemma1},
\begin{equation}\label{eq320}
\begin{aligned}
 \sup_{\tau_1<\tau<t}\int_{B_R}[G_{k_0}(u)]^q\,d\mu + \iint_{U_{i-1}}\left|\nabla[G_{k_i}(u)]^{\frac{p+q-2}{p}}\right|^p\,d\mu d\tau\le 2^i\gamma C_1\int_{\theta_i}^{t}&\int_{B_R}[G_{k_{i+1}}(u)]^q\,d\mu d\tau
\end{aligned}
\end{equation}
Moreover, let us integrate inequality \eqref{eq319} in the time interval $\tau\in(\theta_i,t)$. Then, we observe that
\begin{equation}\label{eq321}
\begin{aligned}
 C(\varepsilon)(2^i\gamma C_1)^{\frac{p+q-2}{p+\alpha-2}}& \int_{\theta_i}^{t}\left(\int_{B_R}[G_{k_{i+1}}(u)]^{r}\,d\mu\right)^{\lambda}\,d\tau\\
 &\le C(\varepsilon)(2^i\gamma C_1)^{\frac{p+q-2}{p+\alpha-2}}(t-\tau_2)\left(\sup_{\tau_2<\tau<t}\int_{B_R}[G_{k_{i+1}}(u)]^r\,d\mu\right)^\lambda
 \end{aligned}
\end{equation}
where we have used that $\tau_2<\theta_i$ for every $i\in\mathbb{N}$.
Finally, we substitute \eqref{eq320} and \eqref{eq321} into \eqref{eq319}, thus we get
$$
\begin{aligned}
\sup_{\tau_1<\tau<t}&\int_{B_R}[G_{k_0}(u)]^q\,d\mu + \iint_{U_{i-1}}\left|\nabla[G_{k_i}(u)]^{\frac{p+q-2}{p}}\right|^p\,d\mu d\tau\\
&\le \varepsilon\iint_{U_i}\left|\nabla[G_{k_{i+1}}(u)]^{\frac{p+q-2}{p}}\right|^p\,d\mu d\tau+C(\varepsilon)(2^i\gamma C_1)^{\frac{p+q-2}{p+\alpha-2}}(t-\tau_2)\left(\sup_{\tau_2<\tau<t}\int_{B_R}[G_{k_{i+1}}(u)]^r\,d\mu\right)^\lambda
\end{aligned}
$$
The thesis follows by noticing that, for any $i\in\mathbb{N}$
$$
G_{k_i}(u)\le G_{k_{i+1}}(u)\le \ldots \le G_{k_\infty}(u),
$$
and that
$$
\frac{p+q-2}{p+\alpha-2}=\frac{N(p+q-2-r)+pr}{N(p-2)+pr}.
$$
\end{proof}

\begin{proposition}\label{lemma3}
Assume that \eqref{costanti} and \eqref{S} holds. Let $S(t)$ be defined as in \eqref{eq34}. Let $u$ be a solution to problem \eqref{eq111}. Suppose that, for all $t\in(0,T)$, $$S(t)\le 1.$$ Let $r\ge1$, then there exists $k=k(p,r)$ such that
\begin{equation*}\label{eq322}
\|u(x,\tau)\|_{L^{\infty}\left(B_R\times\left(\frac t2,t\right)\right)}\,\le\, k \, t^{-\frac{N}{N(p-2)+pr}}\left[\sup_{\frac t4<\tau<t}\int_{B_R}u^r\,d\mu\right]^{\frac{p}{N(p-2)+pr}},
\end{equation*}
for all $t\in(0,T)$.
\end{proposition}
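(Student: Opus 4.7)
The statement is a classical $L^\infty$ smoothing estimate of De Giorgi--Moser type for the $p$-Laplacian with a controlled reaction term, and I would prove it by iteration on the truncation levels $k_i$, building directly on Lemma \ref{lemma1} (and/or Lemma \ref{lemma2}). The plan is to fix a candidate bound $M>0$ and to show, by iteration, that $(u-M)_+\equiv 0$ on $B_R\times(t/2,t)$ whenever $M$ is at least the asserted right-hand side (up to a universal constant); this gives $\|u\|_{L^\infty(B_R\times(t/2,t))}\le M$.

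To set up the iteration, I would take $\tau_1=t/2$, $\tau_2=t/4$, $a_1=2M$, $a_2=M$, so that $k_i=M(1+2^{-i})\searrow M$, $\theta_i\searrow t/4$, and the cylinders $U_i=B_R\times(\theta_i,t)$ grow from $B_R\times(t/2,t)$ up to $B_R\times(t/4,t)$. Under the hypothesis $S(t)\le 1$ and these choices, a direct computation from \eqref{C1} gives $C_1\le 12/t$ uniformly in $i$, so the reaction term contributes only at the same scale as the time-derivative term. Take $q=r$ (or $q>r$ by a small perturbation if $r=1$, to avoid the $q>1$ restriction in Lemma \ref{lemma1}) and define
$$J_i:=\iint_{U_i}[G_{k_i}(u)]^q\,d\mu\,d\tau.$$
Lemma \ref{lemma1} bounds both $\sup_{\theta_{i-1}<\tau<t}\int_{B_R}[G_{k_i}]^q\,d\mu$ and $\iint_{U_{i-1}}\bigl|\nabla[G_{k_i}]^{(p+q-2)/p}\bigr|^p$ by $(c\,2^i/t)\,J_{i+1}$. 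The parabolic Gagliardo--Nirenberg--Sobolev inequality, applied to $v=[G_{k_i}(u)]^{(p+q-2)/p}$, converts these two bounds into an integrated $L^\theta$ control on $[G_{k_i}(u)]$ with $\theta=p+q-2+pq/N>q$; a Chebyshev inequality using the level gap $k_{i-1}-k_i=M\,2^{-i}$ then passes from this $L^\theta$ estimate back to an $L^q$ estimate at the higher level $k_{i+1}$, yielding a recursive inequality of the form
$$J_{i+1}\;\le\;A\,b^i\,t^{-\gamma}\,M^{-\delta}\,J_i^{1+\eta}, \qquad \eta:=\frac{p}{Nq}>0,$$
with explicit $A,b,\gamma,\delta$ depending only on $N,p,q,r$.

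The classical geometric iteration lemma (Ladyzhenskaya--Solonnikov--Uralceva type) then guarantees $J_i\to 0$ as $i\to\infty$, and hence $(u-M)_+\equiv 0$ on $B_R\times(t/2,t)$, provided $J_0$ satisfies a smallness condition of the form $J_0\le c\,t^{\gamma/\eta}M^{\delta/\eta}b^{-1/\eta^2}$. Since $J_0\le (t/2)\,X$ with $X:=\sup_{t/4<\tau<t}\int_{B_R}u^r\,d\mu$ (using $G_{2M}(u)\le u$, and Hölder if $q>r$), this smallness condition translates---after exponent bookkeeping---into the lower bound
$$M\;\ge\;k\,t^{-N/(N(p-2)+pr)}\,X^{p/(N(p-2)+pr)},$$
which is precisely the claim.

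The main obstacle is the careful tracking of exponents: the target powers $N/(N(p-2)+pr)$ on $t$ and $p/(N(p-2)+pr)$ on $X$ are exactly the dimensional scaling of the parabolic $p$-Laplacian, so one must choose $q$ and track the dependence of $A,\gamma,\delta,\eta$ on $N,p,q,r$ so that the smallness condition yields these precise powers. A secondary, essential point is the uniform-in-$i$ bound on $C_1$: it is precisely the hypothesis $S(t)\le 1$ that keeps the reaction contribution $S(t)\,k_i/(k_i-k_{i+1})/\tau_1$ appearing in \eqref{C1} bounded by $c/t$ independently of $i$, so that the iteration remains driven by the pure $p$-Laplacian scaling and is not spoiled by the superlinear term $u^{\sigma}$.
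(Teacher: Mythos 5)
The high-level plan (De Giorgi iteration using the energy/Caccioppoli estimate of Lemma \ref{lemma1}, a parabolic Sobolev embedding, and Chebyshev) is a reasonable one, and you have correctly identified the role of $S(t)\le 1$ in controlling the constant $C_1$ of \eqref{C1}. However, as written the proposal has a directional error that makes the claimed recursion non-derivable.

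The crucial point is that the sequences $k_i$, $\theta_i$, $U_i$ of \eqref{eq32}--\eqref{eq32b} are \emph{not} a De Giorgi sequence. Since $a_1>a_2$, the levels $k_i$ \emph{decrease} from $a_1$ to $a_2$, so $G_{k_i}(u)$ \emph{increases} in $i$, and simultaneously the cylinders $U_i$ \emph{grow}. Consequently, with your definition $J_i:=\iint_{U_i}[G_{k_i}(u)]^q$, the sequence $J_i$ is increasing in $i$. Lemma \ref{lemma1} bounds a quantity at the higher truncation level $k_i$ (plus $\sup_{\tau_1<\tau<t}\int_{B_R}[G_{k_0}]^q$ --- note the level is $k_0$, not $k_i$, which you misquote) by a multiple of $\iint_{U_i}[G_{k_{i+1}}]^q$, i.e.\ by a \emph{lower-level} quantity. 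Applying the parabolic Sobolev embedding and Chebyshev on top of this yields, at best, an inequality of the form $J_{i-1}\le C\,b^i\,J_{i+1}^{1+\eta}$: the \emph{smaller} term is controlled by a power of the \emph{larger} one. This is the structure of the paper's inner absorption iteration and cannot be fed into the Ladyzhenskaya--Solonnikov--Ural'tseva fast-geometric-convergence lemma, which requires $Y_{n+1}\le C\,b^n\,Y_n^{1+\eta}$ with $Y_n$ built on \emph{increasing} levels and \emph{shrinking} cylinders. Your claimed recursion $J_{i+1}\le A\,b^i\,t^{-\gamma}\,M^{-\delta}\,J_i^{1+\eta}$ is the opposite of what the tools provide.

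The paper circumvents this by a two-stage argument. First an \emph{inner} iteration over $i$ (via Lemma \ref{lemma2}, where the extra $\varepsilon$-term is crucial to sum the geometric series as $i\to\infty$) yields the fixed-level estimate \eqref{eq326}, relating $\sup\int[G_{a_1}]^q$ on $(\tau_1,t)$ to $\left(\sup\int[G_{a_2}]^r\right)^{\lambda}$ on $(\tau_2,t)$ with $\lambda>1$ coming from $q>r$. Then an \emph{outer} De Giorgi iteration is performed on the genuinely increasing levels $h_n$ and the shrinking windows $(t_n,t)$ of \eqref{eq327}--\eqref{eq328}, and it is here that the gap $h_{n+1}-h_n\sim C_0 2^{-n}$ enters $C_1$, producing the geometric factor $C_2^{n\beta}$ in \eqref{eq329} that the fast-convergence lemma absorbs. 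A single-stage iteration is possible in principle, but it would require re-deriving a Caccioppoli estimate compatible with \emph{increasing} levels and \emph{shrinking} cylinders (so not Lemma \ref{lemma1} as stated), together with careful bookkeeping of how the level gap, which then shrinks geometrically, enters $C_1$; neither of these is addressed in the proposal. Finally, taking $q=r$ as you suggest would also break the paper's source of superlinearity ($\lambda>1$ in \eqref{eq326} requires $q>r$); relying on the parabolic Gagliardo--Nirenberg exponent instead is a genuine alternative idea, but it must then be carried out within a correctly oriented iteration.
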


\begin{proof}
Let us define, for any $j\in\mathbb{N}$,
\begin{equation}\label{eq323}
J_i:=\iint_{U_{i}}\left|\nabla\left[G_{k_{i+1}}(u)\right]^{\frac{p+q-2}{p}}\right|^p\,d\mu\,dt,
\end{equation}
where $G_k$, $\{k_i\}_{i\in\mathbb{N}}$ and $U_i$ have been defined in \eqref{31}, \eqref{eq32} and \eqref{eq32b} respectively. Let us fix $1\le r<q $ and define
\begin{equation*}\label{eq324}
\beta:=\frac{N(p+q-2-r)+pr}{N(p-2)+pr}.
\end{equation*}
By means of Lemma \ref{lemma2} and \eqref{eq323}, we can write, for any $i\in\mathbb{N}\cup\{0\}$
\begin{equation}\label{eq325}
\begin{aligned}
\sup_{\tau_1<\tau<t}&\int_{B_R}[G_{k_0}(u)]^q\,d\mu + J_0\\
&\le \varepsilon J_1+ C(\varepsilon)(2\gamma C_1)^{\beta}(t-\tau_2)\left(\sup_{\tau_2<\tau<t}\int_{B_R}[G_{k_{\infty}}(u)]^r\,d\mu\right)^{\frac{N(p-2)+pq}{N(p-2)+pr}}\\
&\le \varepsilon \left\{J_2+ C(\varepsilon)(2^2\gamma C_1)^{\beta}(t-\tau_2)\left(\sup_{\tau_2<\tau<t}\int_{B_R}[G_{k_{\infty}}(u)]^r\,d\mu\right)^{\frac{N(p-2)+pq}{N(p-2)+pr}}\right\} \\
&\quad\quad+ C(\varepsilon)(2 \gamma C_1)^{\beta}(t-\tau_2)\left(\sup_{\tau_2<\tau<t}\int_{B_R}[G_{k_{\infty}}(u)]^r\,d\mu\right)^{\frac{N(p-2)+pq}{N(p-2)+pr}}\\
&\le \ldots\\
&\le \varepsilon^{i} J_{i}+\sum_{j=0}^{i-1}(2^{\beta}\varepsilon)^{j}(2 \gamma C_1)^\beta\,C(\varepsilon)(t-\tau_2)\left(\sup_{\tau_2<\tau<t}\int_{B_R}[G_{k_{\infty}}(u)]^r\,d\mu\right)^{\frac{N(p-2)+pq}{N(p-2)+pr}}.
\end{aligned}
\end{equation}
Fix now $\varepsilon>0$ such that $\varepsilon 2^\beta<\frac 12$. Taking the limit as $i\longrightarrow+\infty$ in \eqref{eq325} we have:
\begin{equation}\label{eq326}
\sup_{\tau_1<\tau<t}\int_{B_R}[G_{k_0}(u)]^q\,d\mu\,\le\,\tilde C(2 \gamma C_1)^{\beta}(t-\tau_2)\left(\sup_{\tau_2<\tau<t}\int_{B_R}[G_{k_{\infty}}(u)]^r\,d\mu\right)^{\frac{N(p-2)+pq}{N(p-2)+pr}}.
\end{equation}
Observe that, due to the definition of the sequence $\{k_i\}_{i\in\mathbb{N}}$ in \eqref{eq32}, one has
$$
\begin{aligned}
&k_0=a_1\,,\quad\quad \quad \quad\quad \quad  k_{\infty}=a_2\,;\\
&G_{k_0}(u)=G_{a_1}(u)\,, \quad\quad G_{k_\infty}(u)=G_{a_2}(u)\,.
\end{aligned}
$$
For $n\in\mathbb{N}\cup\{0\}$, consider, for some $C_0>0$ to be fixed later, the following sequences
\begin{equation}\label{eq327}
\begin{aligned}
&t_n=\frac 12 t(1-2^{-n-1})\,;\\
&h_n=C_0(1-2^{-n-1})\,;\\
&\overline{h}_n=\frac 12(h_n+h_{n+1})\,.
\end{aligned}
\end{equation}
Let us now set in  \eqref{eq326}:
\begin{equation}\label{eq328}
\tau_1=t_{n+1}\,;\quad \tau_2=t_n\,;\quad a_1=\overline h_n\,;\quad a_2=h_n\,.
\end{equation}
Then the coefficient $C_1$ defined in \eqref{C1}, by \eqref{eq327} and \eqref{eq328}, satisfies, since for any $t\in(0,T)$ one has $S(t)\le1$,
$$
2C_1\le \frac{C_2^n}{t}\quad \text{for some}\,\,C_2>1.
$$
Due to the latter bound and to \eqref{eq328}, \eqref{eq326} reads
\begin{equation}\label{eq329}
\sup_{t_{n+1}<\tau<t}\int_{B_R}[G_{\overline h_n}(u)]^q\,d\mu\,\le\,\tilde C\,\gamma \,C_2^{n\beta}t^{-\beta+1}\left(\sup_{t_n<\tau<t}\int_{B_R}[G_{h_{n}}(u)]^r\,d\mu\right)^{\frac{N(p-2)+pq}{N(p-2)+pr}}.
\end{equation}
Furthermore, observe that
\begin{equation}\label{eq330}
\int_{B_R}[G_{h_{n+1}}(u)]^r\,d\mu\,\le(h_{n+1}-\overline h_n)^{r-q}\int_{B_R}\left[G_{\overline h_n}(u)\right]^q\,d\mu.
\end{equation}
By combining together \eqref{eq329} and \eqref{eq330}, we derive the following inequalities:
\begin{equation}\label{eq331}
\begin{aligned}
\sup_{t_{n+1}<\tau<t}\int_{B_R}&[G_{h_{n+1}}(u)]^r\,d\mu\,\le (h_{n+1}-\overline h_n)^{r-q}\sup_{t_{n+1}<\tau<t}\int_{B_R}\left[G_{\overline h_n}(u)\right]^q\,d\mu\\
&\le \tilde C\,\gamma\,C_2^{n\beta}\left(\frac{h_{n+1}-h_n}{2}\right)^{r-q}t^{-\beta+1}\left(\sup_{t_n<\tau<t}\int_{B_R}[G_{h_{n}}(u)]^r\,d\mu\right)^{\frac{N(p-2)+pq}{N(p-2)+pr}}.
\end{aligned}
\end{equation}
Let us finally define
$$
Y_n:=\sup_{t_n<\tau<t}\int_{B_R}[G_{h_{n}}(u)]^r\,d\mu.
$$
Hence, by using \eqref{eq327}, \eqref{eq331} reads,
\begin{equation}\label{eq332}
\begin{aligned}
Y_{n+1}&\le \tilde C\,\gamma\, C_2^{n\beta}\left(\frac{h_{n+1}-h_n}{2}\right)^{r-q}\,t^{-\beta+1}\,Y_n^{\frac{N(p-2)+pq}{N(p-2)+pr}}\\
&\le \tilde C\,\gamma\,C_2^{n\beta}\,2^{(n+3)(q-r)}\,C_0^{r-q}\,t^{-\beta+1}\,Y_n^{\frac{N(p-2)+pq}{N(p-2)+pr}}\,\\
&\le k^{n(q-r)}\,C_0^{r-q}\,t^{-\beta+1}\,Y_n^{\frac{N(p-2)+pq}{N(p-2)+pr}}\,,
\end{aligned}
\end{equation}
for some $k=k(p,r)>1$. From \cite[Chapter 2, Lemma 5.6]{LSU} it follows that
\begin{equation}\label{eq333}
Y_n\longrightarrow 0\quad\text{as}\,\,\,n\to+\infty,
\end{equation}
provided
\begin{equation}\label{eq334}
C_0^{r-q}\,t^{-\beta+1}\,Y_0^{\frac{N(p-2)+pq}{N(p-2)+pr}-1}\le k^{r-q}.
\end{equation}
Now, \eqref{eq333}, in turn, reads
$$
\|u\|_{L^{\infty}\left(B_R\times\left(\frac t2,t\right)\right)}\,\le\, C_0.
$$
Moreover, \eqref{eq334} is fulfilled if
$$
C_0=k\,t^{\frac{-\beta+1}{q-r}}\,Y_0^{\left(\frac{N(p-2)+pq}{N(p-2)+pr}-1\right)\left(\frac1{q-r}\right)}\le k\,t^{-\frac{N}{N(p-2)+pr}}\left[\sup_{\frac t4<\tau<t}\int_{B_R}u^r\,d\mu\right]^{\frac{p}{N(p-2)+pr}}.
$$
This concludes the proof.
\end{proof}

\section{Proof of Theorem \ref{teo1}}\label{dim1}
By Lemma \ref{lemma3}, using the same arguments as in the proof of \cite[Lemmata 4 and 5, and subsequent remarks]{MT1}, we get the following result.

\begin{lemma}\label{lemma4}
Assume \eqref{costanti} and $\s>p-1+\frac pN$. Suppose that \eqref{S} and \eqref{epsilon0} hold. Let $S(t)$ be defined as in \eqref{eq34}. Define
\begin{equation}\label{eq335}
T:=\sup\left\{t>0:\,S(t)\le\,1\right\}.
\end{equation}
Then
\begin{equation*}\label{eq336}
T=+\infty.
\end{equation*}
\end{lemma}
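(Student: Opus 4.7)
The plan is a continuity/contradiction argument on the function $S$ defined in \eqref{eq34}. For the truncated problem \eqref{eq111} the solution is regular enough that $t \mapsto \|u(t)\|_{L^\infty(B_R)}$ is continuous on $[0,+\infty)$ and, since $u_0 \in L^\infty(B_R)$, $\tau\,\|u(\tau)\|_{L^\infty(B_R)}^{\sigma-1}\to 0$ as $\tau\to 0^+$; hence $S$ is continuous and nondecreasing with $S(0^+)=0$. If $T<+\infty$ then necessarily $S(T)=1$, so it suffices to show that the smallness hypothesis \eqref{epsilon0} forces $S(t)\le 1/2$ for every $t\in(0,T]$, giving the contradiction.

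On $(0,T]$ the hypothesis $S\le 1$ of Proposition~\ref{lemma3} is satisfied, and I would apply that proposition with two different choices of $r$. First, for $r=s$: the interpolation $\|u_0\|_{L^{\sigma_0}}\le \|u_0\|_{L^1}^{\theta}\|u_0\|_{L^s}^{1-\theta}$ (valid because $1<\sigma_0<s$) shows that $\|u_0\|_{L^{\sigma_0}}$ is small, so Lemma~\ref{lemma41} with $q=s$ yields $\|u(\tau)\|_{L^s(B_R)}\le \|u_0\|_{L^s(B_R)}$ for all $\tau>0$. Feeding this into Proposition~\ref{lemma3} produces
\begin{equation*}
\tau\,\|u(\tau)\|_{L^\infty(B_R)}^{\sigma-1}\;\le\; k^{\sigma-1}\,\tau^{\,1-\frac{N(\sigma-1)}{N(p-2)+ps}}\,\|u_0\|_{L^s(B_R)}^{\frac{p(\sigma-1)}{N(p-2)+ps}},
\end{equation*}
whose $\tau$-exponent is strictly positive because $s>\sigma_0$; this bound is the one to use on $(0,1]$.

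Second, for $r=1$ one first needs control of $\|u(\tau)\|_{L^1}$. Integrating the equation in \eqref{eq111} over $B_R$, using that $u=0$ on $\partial B_R$ (so the $p$-Laplacian term contributes a nonpositive boundary flux) and that $T_k(u^\sigma)\le u^\sigma\le \|u\|_{L^\infty}^{\sigma-1}u$, I obtain
\begin{equation*}
\frac{d}{d\tau}\|u(\tau)\|_{L^1(B_R)}\;\le\;\|u(\tau)\|_{L^\infty(B_R)}^{\sigma-1}\,\|u(\tau)\|_{L^1(B_R)}.
\end{equation*}
The prefactor is integrable at $\tau=0$ thanks to the first step (its singular exponent being $-N(\sigma-1)/(N(p-2)+ps)>-1$ precisely because $s>\sigma_0$), and its tail on $[1,+\infty)$ is integrable thanks to the $r=1$ bound itself, since $\sigma>p-1+p/N$ yields $N(\sigma-1)/(N(p-2)+p)>1$. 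A Gronwall/continuation argument therefore yields a uniform estimate $\|u(\tau)\|_{L^1(B_R)}\le C\,\|u_0\|_{L^1(B_R)}$ on $(0,T]$, with $C$ depending only on positive powers of $\|u_0\|_{L^s}$ and $\|u_0\|_{L^1}$. Substituting into Proposition~\ref{lemma3} with $r=1$ gives
\begin{equation*}
\tau\,\|u(\tau)\|_{L^\infty(B_R)}^{\sigma-1}\;\le\;k^{\sigma-1}\,\tau^{\,1-\frac{N(\sigma-1)}{N(p-2)+p}}\,\bigl(C\,\|u_0\|_{L^1(B_R)}\bigr)^{\frac{p(\sigma-1)}{N(p-2)+p}},
\end{equation*}
with strictly negative $\tau$-exponent, hence effective for $\tau\ge 1$. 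Combining the two bounds one gets
\begin{equation*}
S(T)\;\le\; C_1\,\|u_0\|_{L^s(B_R)}^{a}\;+\;C_2\,\|u_0\|_{L^1(B_R)}^{b}
\end{equation*}
for positive $a,b,C_1,C_2$ depending only on $\sigma,p,N,C_{s,p}$, and the smallness \eqref{epsilon0} with $\varepsilon_0$ chosen sufficiently small pushes the right-hand side strictly below $1/2$, contradicting $S(T)=1$.

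The main obstacle is the circular interdependence that would arise from applying Proposition~\ref{lemma3} directly with $r=1$: the $L^\infty$ estimate feeds the $L^1$ Gronwall inequality, which in turn supplies the input for the $L^\infty$ estimate. The hypothesis $s>\sigma_0$ is precisely what breaks this loop, since it furnishes a self-contained $L^s$-based decay whose singularity near $\tau=0$ is integrable, permitting an independent derivation of the $L^1$ estimate on $(0,1]$; the tail on $[1,T]$ is then controlled via the faster $L^1$-based decay, whose integrability at infinity is guaranteed by $\sigma>p-1+p/N$. Justifying the differentiation of $\|u(\tau)\|_{L^1}$ and passing to the limit in the Gronwall argument are routine thanks to the Lipschitz nature of the truncation $T_k(u^\sigma)$.
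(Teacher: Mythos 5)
The paper proves this lemma only by reference: it says \emph{``By Lemma \ref{lemma3}, using the same arguments as in the proof of [MT1, Lemmata 4 and 5, and subsequent remarks], we get the following result.''} Your reconstruction is, as far as I can tell, a correct instantiation of exactly that strategy, and it mirrors the structure the authors do spell out in the analogous Lemmata \ref{lemma4old} and \ref{lemma6}: assume $T<+\infty$ so that $S(T)=1$, invoke Proposition~\ref{lemma3} on $(0,T]$ (where $S\le 1$ holds by definition of $T$), and then get a bound on $S(T)$ strictly below $1$ from the smallness of the data, by treating small and large times with two different choices of $r$. Your interpolation step to verify the hypothesis of Lemma~\ref{lemma41} at level $\sigma_0$, and the sign analysis of the two $\tau$-exponents, are both correct: $s>\sigma_0$ gives the positive exponent in the $r=s$ bound, $\sigma>p-1+p/N$ gives the negative exponent in the $r=1$ bound.

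The one piece you had to supply that is genuinely not covered by the paper's stated estimates is the control of $\|u(\tau)\|_{L^1(B_R)}$, since Lemma~\ref{lemma41} and Proposition~\ref{prop42} are only formulated for $q>1$ and $q_0>1$. Your route — integrate the equation, use the nonpositive boundary flux and $T_k(u^\sigma)\le\|u\|_{L^\infty}^{\sigma-1}u$, then close the resulting Gronwall inequality by a continuation argument in which the $r=s$ bound handles integrability near $\tau=0$ and the $r=1$ bound handles the tail — is the right idea, and it is precisely the loop-breaking role of $s>\sigma_0$ that you correctly identified. Two remarks. First, a small exponent typo: Proposition~\ref{lemma3} yields $\left[\sup\int_{B_R}u^r\,d\mu\right]^{\frac{p}{N(p-2)+pr}}=\left[\sup\|u\|_{L^r}^r\right]^{\frac{p}{N(p-2)+pr}}$, so with $r=s$ the power of $\|u_0\|_{L^s(B_R)}$ should be $\frac{ps(\sigma-1)}{N(p-2)+ps}$, not $\frac{p(\sigma-1)}{N(p-2)+ps}$; this does not affect the argument since only its positivity is used. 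Second, the continuation/bootstrap step should be stated a touch more explicitly (say, take $t^\ast$ the first time at which $\sup_{0<\tau<t}\|u(\tau)\|_{L^1}$ reaches $2\|u_0\|_{L^1}e^{C_3\|u_0\|_{L^s}^a}$, show that the Gronwall estimate on $[1,t^\ast]$ strictly beats that threshold when $\varepsilon_0$ is small, and conclude $t^\ast$ cannot be finite), but the idea is exactly right and the implicit constant in $\|u(\tau)\|_{L^1}\le C\|u_0\|_{L^1}$ stays bounded as $\varepsilon_0\to 0$, so the final contradiction $S(T)\le C_1\varepsilon_0^a+C_2\varepsilon_0^b<1$ goes through.
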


\begin{proof}[Proof of Theorem \ref{teo1}]
Let $\{u_{0,h}\}_{h\ge 0}$ be a sequence of functions such that
\begin{equation*}
\begin{aligned}
&(a)\,\,u_{0,h}\in L^{\infty}(M)\cap C_c^{\infty}(M) \,\,\,\text{for all} \,\,h\ge 0, \\
&(b)\,\,u_{0,h}\ge 0 \,\,\,\text{for all} \,\,h\ge 0, \\
&(c)\,\,u_{0, h_1}\leq u_{0, h_2}\,\,\,\text{for any } h_1<h_2,  \\
&(d)\,\,u_{0,h}\longrightarrow u_0 \,\,\, \text{in}\,\, L^{s}(M)\cap L^1(M)\quad \textrm{ as }\, h\to +\infty\,,\\
\end{aligned}
\end{equation*}

Observe that, due to assumptions $(c)$ and $(d)$, $u_{0,h}$ satisfies \eqref{epsilon0}. For any $R>0$, $k>0$, $h>0$, consider the problem
\begin{equation}\label{5}
\begin{cases}
u_t= \dive\left(|\nabla u|^{p-2}\nabla u\right) +T_k(u^{\s}) &\text{in}\,\, B_R\times (0,+\infty)\\
u=0& \text{in}\,\, \partial B_R\times (0,\infty)\\
u=u_{0,h} &\text{in}\,\, B_R\times \{0\}\,. \\
\end{cases}
\end{equation}
From standard results it follows that problem \eqref{5} has a solution $u_{h,k}^R$ in the sense of Definition \ref{def31}. In addition, $u^R_{h,k}\in C\big([0, T]; L^q(B_R)\big)$ for any $q>1$.

\noindent (i) In view of Proposition \ref{lemma3} and Lemma \ref{lemma4}, the solution $u_{h,k}^R$ to problem \eqref{5} satisfies estimate \eqref{eq332} for any $t\in(0,+\infty)$, uniformly w.r.t. $R$, $k$ and $h$. By standard arguments we can pass to the limit as $R\to\infty$, $k\to\infty$ and $h\to\infty$ and we obtain a solution $u$ to equation \eqref{problema} satisfying \eqref{eq21tot}.
\medskip

\noindent (ii) Due to Proposition \ref{prop42}, $u_{h,k}^R$ to problem \eqref{5} satisfies estimate \eqref{eq46} for any $t\in(0,+\infty)$, uniformly w.r.t. $R$, $k$ and $h$. Thus, the solution $u$ fulfills \eqref{eq23}.
\medskip

\noindent (iii) We now furthermore suppose that $u_{0,h}\in L^q(M)$ and $u_{0,h}\longrightarrow u_0$ in $L^{q}(M)$. Due to Proposition \ref{lemma41}, the solution $u_{h,k}^R$ to problem \eqref{5} satisfies estimate \eqref{eq42} for any $t\in(0,+\infty)$, uniformly w.r.t. $R$, $k$ and $h$. Thus, the solution $u$ also fulfills \eqref{eq22}.
\smallskip

This completes the proof.

\end{proof}

\section{Proof of Theorem \ref{teo11}}\label{dim2}
To prove Theorem \ref{teo11} we need the following two results.
\begin{lemma}\label{lemma4old}
Assume \eqref{costanti} and, moreover, that $\s>p-1+\frac pN$. Assume that inequality \eqref{S} holds. Let $u$ be a solution of problem \eqref{eq111} with $u_0\in  L^{\infty}(B_R)$, $u_0\ge0$, such that
$$
\|u_0\|_{L^{\s_0}(B_R)}\le\varepsilon_2,
$$
for $\varepsilon_2=\varepsilon_2(\s,p,N,C_{s,p},\s_0)>0$ sufficiently small and $\s_0$ as in \eqref{sig0}. Let $S(t)$ and $T$ be defined as in \eqref{eq34} and \eqref{eq335} respectively. Then
\begin{equation*}\label{eq335old}
T=+\infty.
\end{equation*}
\end{lemma}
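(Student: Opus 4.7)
The plan is to argue by a continuity/bootstrap argument, combining the $L^\infty$--smoothing estimate in Proposition \ref{lemma3} with the $L^{\sigma_0}$--$L^{\sigma_0}$ decay estimate in Lemma \ref{lemma41}, then tuning $\varepsilon_2$ so that $S(t)$ is forced to stay strictly below $1$. First I would observe that since $u_0\in L^\infty(B_R)$, standard parabolic regularity for the truncated problem gives $u\in L^\infty(B_R\times(0,T'))$ for every $T'>0$, and in particular $\tau\|u(\tau)\|_{L^\infty(B_R)}^{\sigma-1}\to 0$ as $\tau\to 0^+$, so the set $\{t>0:\,S(t)\le 1\}$ is a nonempty interval $(0,T)$ with $T>0$, and the map $t\mapsto S(t)$ is nondecreasing and continuous.

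Next I would work on $(0,T)$, where by definition $S(t)\le 1$ and hence Proposition \ref{lemma3} applies. Taking $r=\sigma_0$ one has $N(p-2)+p\sigma_0=N(\sigma-1)$, so the proposition specialises to
\begin{equation*}
\|u(\tau)\|_{L^\infty(B_R)}\le k\,\tau^{-\frac{1}{\sigma-1}}\!\left(\sup_{\tau/4<s<\tau}\!\!\int_{B_R}\!u(s)^{\sigma_0}\,d\mu\right)^{\!\frac{p}{N(\sigma-1)}}
\end{equation*}
for every $\tau\in(0,T)$. Now I would assume $\varepsilon_2\le\bar\varepsilon$ with $\bar\varepsilon$ as in Lemma \ref{lemma41} taken at $q=\sigma_0$, so that $\|u(s)\|_{L^{\sigma_0}(B_R)}\le\|u_0\|_{L^{\sigma_0}(B_R)}$ for all $s>0$. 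Substituting and simplifying (using $\sigma_0\cdot p/(N(\sigma-1))=(\sigma-p+1)/(\sigma-1)$), I obtain
\begin{equation*}
\tau\,\|u(\tau)\|_{L^\infty(B_R)}^{\sigma-1}\le k^{\sigma-1}\|u_0\|_{L^{\sigma_0}(B_R)}^{\sigma-p+1}\qquad\forall\,\tau\in(0,T),
\end{equation*}
so that $S(t)\le k^{\sigma-1}\varepsilon_2^{\sigma-p+1}$ on $(0,T)$.

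Finally, I would choose $\varepsilon_2=\varepsilon_2(\sigma,p,N,C_{s,p},\sigma_0)$ so small that, in addition to $\varepsilon_2\le\bar\varepsilon$, one has $k^{\sigma-1}\varepsilon_2^{\sigma-p+1}\le 1/2$. Then, if $T<+\infty$ were true, the continuity of $S$ would force $S(T)\le 1/2$, so that by continuity one could extend the inequality $S(t)\le 1$ past $T$, contradicting the definition of $T$ in \eqref{eq335}. Hence $T=+\infty$. The main technical point to be careful about is the simultaneous dependence of the smallness parameter on the constants coming from Lemma \ref{lemma41} (needed to freeze the $L^{\sigma_0}$ norm) and from Proposition \ref{lemma3} (needed to get the smoothing constant $k$); the interplay is clean only because the exponent computation with $r=\sigma_0$ produces exactly the self-similar power $\tau^{-1/(\sigma-1)}$, which cancels the factor $\tau$ in $S(t)$ and leaves a quantity controlled purely by the initial $L^{\sigma_0}$ norm.
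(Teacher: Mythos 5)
Your argument is correct, and it is in fact slightly cleaner than the paper's. The paper proves the lemma by contradiction: assume $T<+\infty$, apply Proposition \ref{lemma3} with $r=q>\s_0$, then control $\sup_{t/4<\tau<t}\|u(\tau)\|_{L^q}$ by the $L^{\s_0}$--$L^q$ smoothing estimate of Proposition \ref{prop42} (which rests on a Moser iteration), and finally observe that the powers of $t$ cancel, leaving $1=S(T)\le C\,\tilde C\,\varepsilon_2^{\s-p+1}$, a contradiction for $\varepsilon_2$ small. You instead take $r=\s_0$ directly in Proposition \ref{lemma3}, which turns $N(p-2)+pr$ into $N(\s-1)$ and immediately yields the self-similar exponent $\tau^{-1/(\s-1)}$; you then only need the elementary $L^{\s_0}$ non-increase from Lemma \ref{lemma41} (at $q=\s_0$) to freeze $\sup_{\tau/4<s<\tau}\|u(s)\|_{L^{\s_0}}\le\|u_0\|_{L^{\s_0}}$. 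The exponent arithmetic $\s_0\cdot p/(N(\s-1))=(\s-p+1)/(\s-1)$ then produces the same bound $S(t)\le k^{\s-1}\varepsilon_2^{\s-p+1}$ without invoking the iterated smoothing estimate. Both routes require the simultaneous smallness conditions you flag (one for Lemma \ref{lemma41}, one for the final bound), and both exploit the same cancellation; the difference is that you bypass Proposition \ref{prop42} entirely. You also take care to justify $T>0$ and to phrase the conclusion as a continuity/bootstrap argument rather than a contradiction, which the paper leaves implicit; this is a harmless refinement. In short: correct, same underlying mechanism, but a genuinely more economical choice of auxiliary exponent.
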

\begin{proof}
We suppose by contradiction that $T<+\infty$. Then, by \eqref{eq335} and \eqref{eq34}, we can write:
\begin{equation}\label{eq336old}
1=S(T)=\sup_{0<t<T}\,t\|u(t)\|_{L^{\infty}(B_R)}^{\s-1}.
\end{equation}
Due to Lemma \eqref{lemma3} with the choice $r=q>\s_0$, \eqref{eq336old} reduces to
\begin{equation}\label{eq337old}
\begin{aligned}
1=S(T)&\le \,\sup_{0<t<T}\,t\left\{k\,t^{-\frac{N}{N(p-2)+pq}}\left(\sup_{\frac t4<\tau<t}\int_{B_R}u^q\,d\mu\right)^{\frac{p}{N(p-2)+pq}}\right\}^{(\s-1)}\\
&\le  \,\sup_{0<t<T}\,k\,t^{1-\frac{N(\s-1)}{N(p-2)+pq}}\left(\sup_{\frac t4<\tau<t}\left\|u(\tau)\right\|_{L^{q}(B_R)}^{\frac{q\,p(\s-1)}{N(p-2)+pq}}\right)\,.
\end{aligned}
\end{equation}
Define
\begin{equation}\label{eq338old}
I_1:= \sup_{\frac t4<\tau<t}\left\|u(\tau)\right\|_{L^{q}(B_R)}^{\frac{pq(\s-1)}{N(p-2)+pq}}.
\end{equation}
In view of the choice $q>\s_0$, we can apply Proposition \ref{prop42} with $q_0=\s_0$ to \eqref{eq338old}, thus we get
\begin{equation}\label{eq339old}
\begin{aligned}
I_1&\le \sup_{\frac t4<\tau<t}\left[C\,t^{-\gamma_q}\,\|u_0\|^{\delta_q}_{L^{q_0}(B_R)}\right]^{\frac{pq(\s-1)}{N(p-2)+pq}}\\
& \le \,C\,t^{-\gamma_q\frac{pq(\s-1)}{N(p-2)+pq}}\,\|u_0\|^{\delta_q\frac{pq(\s-1)}{N(p-2)+pq}}_{L^{q_0}(B_R)}\,,
\end{aligned}
\end{equation}
where $\gamma_q$ and $\delta_q$ are defined in \eqref{eq47}. By substituting \eqref{eq339old} into \eqref{eq337old} we get
\begin{equation*}\label{eq340old}
1=S(T)\le \,C\,k\sup_{0<t<T}\, t^{1-\frac{N(\s-1)}{N(p-2)+pq}-\gamma_q\frac{pq(\s-1)}{N(p-2)+pq}}\|u_0\|^{\delta_q\frac{pq(\s-1)}{N(p-2)+pq}}_{L^{q_0}(B_R)} \,.
\end{equation*}
Observe that
$$
\begin{aligned}
&1-\frac{N(\s-1)}{N(p-2)+pq}-\gamma_q\frac{pq(\s-1)}{N(p-2)+pq}=0;\\
& \delta_q\frac{pq(\s-1)}{N(p-2)+pq}=\s-p+1>0\,;
\end{aligned}
$$
hence
$$
1=S(T)< C\,\tilde C\, \varepsilon_2^{\s-p+1}\,.
$$
Provided $\varepsilon_2$ is sufficiently small, a contradiction, i.e. $1=S(T)<1$. Thus $T=+\infty$.
\end{proof}

\begin{proposition}\label{prop3old}
ssume \eqref{costanti} and, moreover, that $\s>p-1+\frac pN$. Let $u$ be the solution to problem \eqref{eq111} with $u_0\in{\textrm L}^{\infty}(B_R)$, $u_0\ge0$. Let $\s_0$ be defined in \eqref{sig0} and $q>\s_0.$ Assume that
$$
\|u_0\|_{\textrm L^{\s_0}(B_R)}\,<\,\varepsilon_2
$$
with $\varepsilon_2=\varepsilon_2(\s,p,N, C_{s,p},\s_0)>0$ sufficiently small. Then, for some $C=C(N,\s,p,q,\s_0)>0$:
\begin{equation}\label{eq21old}
\|u(t)\|_{L^{\infty}(B_R)}\le C\, t^{-\frac{1}{\s-1}}\,\|u_0\|_{L^{\s_0}(B_R)}^{1-\frac{p-2}{\s-1}}\,\quad\text{for any $t\in(0,+\infty)$.}
\end{equation}
\end{proposition}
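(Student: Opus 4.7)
The plan is to combine three previously established pieces: the $L^\infty$-smoothing of Proposition \ref{lemma3}, the global bound $S(t)\le 1$ provided by Lemma \ref{lemma4old}, and the $L^{\sigma_0}\!-\!L^q$ smoothing of Proposition \ref{prop42}. The exponent $\sigma_0=(\sigma-p+1)N/p$ is the scaling-critical exponent attached to the reaction term $u^\sigma$ and to the $p$-Laplacian diffusion, and the whole argument rests on the fact that the exponents of the two smoothing estimates fit together exactly to reproduce the self-similar decay rate $t^{-1/(\sigma-1)}$.

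First, I would fix $q>\sigma_0$ and choose $\varepsilon_2$ small enough that the smallness hypotheses of Proposition \ref{prop42} (with $q_0=\sigma_0$) and of Lemma \ref{lemma4old} are simultaneously satisfied. By Lemma \ref{lemma4old}, one has $T=+\infty$, i.e.\ $S(t)\le 1$ for all $t>0$, so Proposition \ref{lemma3} is applicable for every $t>0$. Applying it with $r=q>\sigma_0$ gives
\begin{equation*}
\|u(t)\|_{L^\infty(B_R)}\le k\,t^{-\frac{N}{N(p-2)+pq}}\Bigl(\sup_{t/4<\tau<t}\|u(\tau)\|_{L^q(B_R)}\Bigr)^{\frac{pq}{N(p-2)+pq}}.
\end{equation*}

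Next, since $q>\sigma_0$ and $\|u_0\|_{L^{\sigma_0}}<\varepsilon_2$, Proposition \ref{prop42} with $q_0=\sigma_0$ gives
\begin{equation*}
\|u(\tau)\|_{L^q(B_R)}\le C\,\tau^{-\gamma_q}\|u_0\|_{L^{\sigma_0}(B_R)}^{\delta_q},
\end{equation*}
with $\gamma_q$, $\delta_q$ as in \eqref{eq47}. Plugging this bound into the previous display and taking the supremum over $\tau\in(t/4,t)$ (which only costs a universal multiplicative constant in $\tau^{-\gamma_q}$), one obtains
\begin{equation*}
\|u(t)\|_{L^\infty(B_R)}\le C\,t^{-\alpha}\,\|u_0\|_{L^{\sigma_0}(B_R)}^{\beta},
\qquad \alpha=\frac{N+\gamma_q\,pq}{N(p-2)+pq},\quad \beta=\frac{\delta_q\,pq}{N(p-2)+pq}.
\end{equation*}

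The main (but purely algebraic) step is then to check that $\alpha=\frac{1}{\sigma-1}$ and $\beta=\frac{\sigma-p+1}{\sigma-1}=1-\frac{p-2}{\sigma-1}$, so that the dependence on the intermediate parameter $q$ drops out entirely. This is where the critical character of $\sigma_0$ plays its role: with $q_0=\sigma_0=(\sigma-p+1)N/p$ one has $pq_0+N(p-2)=N(\sigma-1)$, and substituting $\gamma_q=\frac{1}{\sigma-1}\bigl(1-\frac{N(\sigma-p+1)}{pq}\bigr)$ into $N+\gamma_q\,pq$ gives $\frac{pq+N(p-2)}{\sigma-1}$, which cancels the denominator $N(p-2)+pq$ to produce $\alpha=1/(\sigma-1)$; an analogous computation, using $\delta_q=\frac{\sigma-p+1}{\sigma-1}\cdot\frac{pq+N(p-2)}{pq}$, yields $\beta=(\sigma-p+1)/(\sigma-1)$. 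This verification is the only nontrivial point; everything else is direct application of results already proved.
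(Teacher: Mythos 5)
Your proof is correct and follows essentially the same path as the paper's: invoke Lemma \ref{lemma4old} to get $S(t)\le 1$ for all $t$, apply Proposition \ref{lemma3} with $r=q>\sigma_0$, plug in the $L^{\sigma_0}\!-\!L^q$ bound from Proposition \ref{prop42} with $q_0=\sigma_0$, and verify the algebraic cancellations $-\frac{N}{N(p-2)+pq}-\gamma_q\frac{pq}{N(p-2)+pq}=-\frac{1}{\sigma-1}$ and $\delta_q\frac{pq}{N(p-2)+pq}=\frac{\sigma-p+1}{\sigma-1}$. The added remark explaining why $q$ drops out (via the identity $pq_0+N(p-2)=N(\sigma-1)$ at $q_0=\sigma_0$) is a nice piece of intuition not spelled out in the paper but entirely consistent with its computation.
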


\begin{proof}
Due to Lemma \ref{lemma4old}, $$S(t)\le 1\quad \text{for all}\,\,\,t\in(0,+\infty).$$ Therefore, by Lemma \ref{lemma3} and Proposition \ref{prop42} with $q_0=\s_0$, for all $t\in (0,+\infty)$
\begin{equation}\label{eq341old}
\begin{aligned}
\|u(t)\|_{L^{\infty}(B_R)}&\le \|u\|_{L^{\infty}\left(B_R\times\left(\frac t2,t\right)\right)}\,\\
&\le\,k\,t^{-\frac{N}{N(p-2)+pq}}\left[\sup_{\frac t4<\tau<t}\|u(\tau)\|_{L^q(B_R)}^q\right]^{\frac{p}{N(p-2)+pq}}\\
&\le\,C\,t^{-\frac{N}{N(p-2)+pq}-\gamma_q\frac{pq}{N(p-2)+pq}}\|u_0\|_{L^{\s_0}(B_R)}^{\delta_q\frac{pq}{N(p-2)+pq}}\,,
\end{aligned}
\end{equation}
where $C=C(\s,p,N,q,\s_0)>0$, $\gamma_q$ and $\delta_q$ as in \eqref{eq47} with $q_0=\s_0$.
Observe that
\begin{equation}\label{eq342old}
-\frac{N}{N(p-2)+pq}-\gamma_q\frac{pq}{N(p-2)+pq}=-\frac 1{\s-1}\,,
\end{equation}
and
\begin{equation}\label{eq343old}
\delta_q\frac{pq}{N(p-2)+pq}=\frac{\s-p+1}{\s-1}\,.
\end{equation}
By combining \eqref{eq341old} with \eqref{eq342old} and \eqref{eq343old} we get the thesis.
\end{proof}

\begin{proof}[Proof of Theorem \ref{teo11}] We use the same argument discussed in the proof of Theorem \ref{teo1}. In fact, let $\{u_{0,l}\}_{l\ge 0}$ be a sequence of functions such that
\begin{equation*}
\begin{aligned}
&(a)\,\,u_{0,l}\in L^{\infty}(M)\cap C_c^{\infty}(M) \,\,\,\text{for all} \,\,l\ge 0, \\
&(b)\,\,u_{0,l}\ge 0 \,\,\,\text{for all} \,\,l\ge 0, \\
&(c)\,\,u_{0, l_1}\leq u_{0, l_2}\,\,\,\text{for any } l_1<l_2,  \\
&(d)\,\,u_{0,l}\longrightarrow u_0 \,\,\, \text{in}\,\, L^{\s_0}(M)\quad \textrm{ as }\, l\to +\infty\,,\\
\end{aligned}
\end{equation*}
where $\s_0$ has been defined in \eqref{sig0}. Observe that, due to assumptions $(c)$ and $(d)$, $u_{0,l}$ satisfies \eqref{epsilon000}. For any $R>0$, $k>0$, $l>0$, we consider problem \eqref{5} with the sequence $u_{0,h}$ replaced by the sequence $u_{0,l}$. From standard results it follows that problem \eqref{5} has a solution $u_{l,k}^R$ in the sense of Definition \ref{def31}; moreover, $u^R_{l,k}\in C\big([0, T]; L^q(B_R)\big)$ for any $q>1$.

Due to Proposition \ref{prop3old}, Proposition \ref{prop42} and Lemma \ref{lemma41}, the solution $u_{l,k}^R$ to problem \eqref{5} satisfies estimates \eqref{eq21old}, \eqref{eq46} and \eqref{eq42} for $t\in(0,+\infty)$, uniformly w.r.t. $R$, $k$ and $l$. Thus, by standard arguments we can pass to the limit as $R\to\infty$, $k\to\infty$ and $l\to\infty$ and we obtain a solution $u$ to equation \eqref{problema} satisfying \eqref{eq21totold}, \eqref{eq23} and \eqref{eq22}.
\end{proof}

\section{Proof of Theorem \ref{teo71}}\label{dim3}
\begin{lemma}\label{lemma5}
Assume \eqref{costanti}, $p>2$, and $q>\max\left\{\s_0,1\right\}$. Let $u$ be a solution to problem \eqref{eq111} with $u_0\in  L^{\infty}(B_R)$, $u_0\ge0$, such that
\begin{equation}\label{eq60}
\|u_0\|_{L^{q}(B_R)}\le\delta_1,
\end{equation}
for $\delta_1>0$ sufficiently small. Let $S(t)$ be as in \eqref{eq34}, then
\begin{equation}\label{eq61}
T:=\sup\{t>0:\,S(t)\le\,1\}>1.
\end{equation}
\end{lemma}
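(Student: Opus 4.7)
The plan is to argue by contradiction. Suppose $T \le 1$; by the continuity of $S$ this forces $S(T)=1$, and I will show the strict inequality $S(T)<1$. Since $u_0 \in L^\infty(B_R)$, standard parabolic regularity for the truncated problem \eqref{eq111} makes $\tau \mapsto \|u(\tau)\|_{L^\infty(B_R)}$ continuous, so $S$ is continuous with $S(0^+)=0$; either $T=+\infty$ (in which case the conclusion is trivial) or $T<\infty$ and the assumption $T\le 1$ yields $S(T)=1$.

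On $(0,T)$ we have $S\le 1$, so Proposition \ref{lemma3} with $r=q$ applies and gives, for every $\tau\in(0,T)$,
$$
\|u(\tau)\|_{L^\infty(B_R)} \le k\,\tau^{-\frac{N}{N(p-2)+pq}}\,M(\tau)^{\frac{pq}{N(p-2)+pq}},\qquad M(\tau):=\sup_{0<s<\tau}\|u(s)\|_{L^q(B_R)}.
$$
Raising this to the power $\sigma-1$ and multiplying by $\tau$, the exponent of $\tau$ becomes $1-\frac{N(\sigma-1)}{N(p-2)+pq}$, and the hypothesis $q>\sigma_0$ makes this strictly positive. Hence for every $\tau\le T\le 1$,
$$
S(T)\le k^{\sigma-1}\,M(T)^{\frac{pq(\sigma-1)}{N(p-2)+pq}}.\qquad (\ast)
$$

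To control $M(T)$ I multiply the equation in \eqref{eq111} by $u^{q-1}$, integrate, discard the nonnegative dissipation, and use $T_k(u^\sigma)\le u^\sigma$ to obtain $\frac{1}{q}\frac{d}{dt}\|u(t)\|_q^q \le \|u(t)\|_\infty^{\sigma-1}\|u(t)\|_q^q$. Feeding the pointwise smoothing bound from the previous step into the right-hand side and exploiting that $M$ is non-decreasing yields a Gronwall inequality with the integrable singular kernel $t^{-\alpha}$, where $\alpha=\frac{N(\sigma-1)}{N(p-2)+pq}\in(0,1)$. Since $\int_0^t s^{-\alpha}\,ds \le 1/(1-\alpha)$ for $t\le 1$, taking the supremum over $t\le T$ produces the implicit bound
$$
M(T)\le \|u_0\|_{L^q(B_R)}\,\exp\!\left(C\,M(T)^{\frac{pq(\sigma-1)}{N(p-2)+pq}}\right),
$$
for some $C=C(\sigma,p,N,q)$. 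As $M$ is continuous with $M(0^+)=\|u_0\|_{L^q(B_R)}\le\delta_1$, a standard continuity-bootstrap argument then yields $M(T)\le 2\delta_1$, provided $\delta_1$ is so small that $\exp\!\bigl(C(2\delta_1)^{pq(\sigma-1)/(N(p-2)+pq)}\bigr)\le 2$.

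Plugging this into $(\ast)$ gives
$$
1=S(T)\le k^{\sigma-1}(2\delta_1)^{\frac{pq(\sigma-1)}{N(p-2)+pq}},
$$
which fails once $\delta_1$ is chosen sufficiently small, contradicting $T\le 1$. Therefore $T>1$. The main technical obstacle is the implicit $L^q$ bootstrap: one must close the inequality for $M(T)$ uniformly in $R$ and in the truncation parameter $k$, so that the ensuing smallness threshold for $\delta_1$ matches the one announced in Remark \ref{rem1}.
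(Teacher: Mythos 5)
Your proof is correct, but it takes a genuinely different route from the paper's. The paper's argument is short: it invokes Lemma~\ref{lemma71}/Proposition~\ref{prop71} (the $L^q$--$L^q$ non-expansiveness estimate) to replace $\sup_{\tau<t}\|u(\tau)\|_{L^q(B_R)}$ by $\|u_0\|_{L^q(B_R)}\le\delta_1$, and then feeds this directly into the smoothing bound from Proposition~\ref{lemma3}, yielding $1=S(T)\le k\,T^{1-\frac{N(\sigma-1)}{N(p-2)+pq}}\delta_1^{\frac{pq(\sigma-1)}{N(p-2)+pq}}$, whence $T>1$ once $\delta_1$ is small (the exponent of $T$ being positive precisely because $q>\sigma_0$). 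You instead avoid Lemma~\ref{lemma71} entirely: you feed the $L^\infty$ smoothing bound back into the elementary energy inequality $\frac{1}{q}\frac{d}{dt}\|u\|_q^q\le\|u\|_\infty^{\sigma-1}\|u\|_q^q$, getting a Gronwall inequality with integrable singular kernel $t^{-\alpha}$, $\alpha=\frac{N(\sigma-1)}{N(p-2)+pq}\in(0,1)$; on a time interval of length $\le1$ this produces the implicit estimate $M(T)\le\|u_0\|_{L^q}\exp\bigl(C\,M(T)^\beta\bigr)$, closed by a continuity bootstrap. This is a self-contained alternative: it requires only the Sobolev inequality (through Proposition~\ref{lemma3}) and the $L^q$ smallness, whereas the paper's route formally imports the hypotheses of Lemma~\ref{lemma71} (Poincar\'e inequality, smallness of $\|u_0\|_{L^{\sigma N/p}}$), which do not appear explicitly in the statement of Lemma~\ref{lemma5} and are only available from the surrounding Section~7 context. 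The price you pay is the extra bootstrap step and the essential use of the $T\le1$ restriction to make the singular integral uniformly bounded; the paper uses $T\le1$ only at the very last line. Both arguments give the same smallness threshold up to constants.
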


\begin{proof}
By \eqref{eq34} and \eqref{eq61} one has
\begin{equation}\label{eq63}
1=S(T)=\sup_{0<t<T}\,t\|u(t)\|_{L^{\infty}(B_R)}^{\s-1}.
\end{equation}
By Lemma \eqref{lemma3} applied with $r=q>\max\left\{\frac Np(\s-p+1),1\right\}$, \eqref{eq63} gives
\begin{equation}\label{eq64}
\begin{aligned}
1=S(T)&\le \sup_{0<t<T}\,t\left\{k\,t^{-\frac{N}{N(p-2)+pq}}\left(\sup_{\frac t4<\tau<t}\int_{B_R}u^q\,d\mu\right)^{\frac{p}{N(p-2)+pq}}\right\}^{(\s-1)}\\
&\le  \sup_{0<t<T}\, k\,t^{1-\frac{N(\s-1)}{N(p-2)+pq}}\left(\sup_{\frac t4<\tau<t}\left\|u(\tau)\right\|_{L^{q}(B_R)}^{\frac{q\,p(\s-1)}{N(p-2)+pq}}\right)\,.
\end{aligned}
\end{equation}
By applying Proposition \ref{prop71} to \eqref{eq64} and due to \eqref{eq60}, we get
\begin{equation*}\label{eq65}
\begin{aligned}
1=S(T)&\le \sup_{0<t<T}\, k\,t^{1-\frac{N(\s-1)}{N(p-2)+pq}}\left\|u_0\right\|_{L^{q}(B_R)}^{\frac{q\,p(\s-1)}{N(p-2)+pq}}\\
&\le \, k\,T^{1-\frac{N(\s-1)}{N(p-2)+pq}}\,\delta_1^{\frac{q\,p(\s-1)}{N(p-2)+pq}}\,.
\end{aligned}
\end{equation*}
The thesis follows for $\delta_1>0$ small enough.

\end{proof}

\begin{lemma}\label{lemma6}
Assume \eqref{costanti}, $p>2$ and $s>\max\left\{\s_0,1\right\}.$ Let $u$ be a solution to problem \eqref{eq111} with $u_0\in  L^{\infty}(B_R)$, $u_0\ge0$, such that
\begin{equation}\label{eq66}
\|u_0\|_{L^{s}(B_R)}\le\delta_1,\quad \|u_0\|_{L^{\s\frac Np}(B_R)}\le\delta_1,
\end{equation}
for $\delta_1>0$ sufficiently small. Let $S(t)$ be as in \eqref{eq34}, then
\begin{equation}\label{eq62}
T:=\sup\{t\ge0:\,S(t)\le\,1\}=+\infty.
\end{equation}
\end{lemma}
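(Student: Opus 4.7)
The plan is to argue by contradiction along the lines of Lemma~\ref{lemma4old}, with Proposition~\ref{prop71} (which uses both Sobolev and Poincar\'e) playing the role of Proposition~\ref{prop42}. Suppose $T<+\infty$; by continuity and the definition \eqref{eq62}, one has $S(T)=1$. Since $S(t)\le 1$ on $(0,T)$, I can apply Proposition~\ref{lemma3} with $r=q$ for some $q>1$ to be determined, obtaining, for all $t\in(0,T)$,
$$\|u(t)\|_{L^{\infty}(B_R)} \,\le\, k\,t^{-\frac{N}{N(p-2)+pq}}\!\left(\sup_{t/4<\tau<t}\|u(\tau)\|_{L^q(B_R)}^q\right)^{\!\frac{p}{N(p-2)+pq}}.$$

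Next, the hypothesis $\|u_0\|_{L^{\s N/p}(B_R)}\le\delta_1$ ensures (provided $\delta_1\le\tilde\varepsilon_1$) that the smallness condition \eqref{eq711} of Proposition~\ref{prop71} holds. I would invoke that proposition with $q_0=s$ to get
$$\|u(\tau)\|_{L^q(B_R)} \,\le\, C\,\tau^{-\gamma_q}\,\|u_0\|_{L^s(B_R)}^{\delta_q},\qquad \gamma_q=\tfrac{s}{p-2}\!\left(\tfrac1s-\tfrac1q\right),\quad \delta_q=\tfrac{s}{q}.$$
Substituting into the previous estimate, raising to the $(\s-1)$-th power, multiplying by $t$, and taking the sup over $t\in(0,T)$, yields
$$1 = S(T)\,\le\,C\,\sup_{0<t<T} t^{\alpha(q)}\,\|u_0\|_{L^s(B_R)}^{\beta(q)},$$
with $\alpha(q):=1-\frac{N(\s-1)}{N(p-2)+pq}-\gamma_q\cdot\frac{pq(\s-1)}{N(p-2)+pq}$ and $\beta(q):=\delta_q\cdot\frac{pq(\s-1)}{N(p-2)+pq}$.

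The key algebraic step is to choose $q$ so that $\alpha(q)=0$. A direct computation shows that this happens precisely when $N(p-2)+pq=\frac{p(\s-1)s}{\s-p+1}$, that is, for
$$q^{*}:=\frac{(\s-1)s}{\s-p+1}-\frac{N(p-2)}{p}.$$
With this choice $\beta(q^{*})=\s-p+1>0$, and the inequality $q^{*}\ge s$ reduces (after dividing by $p-2>0$) exactly to $s\ge\s_0$, which holds by hypothesis. Hence Proposition~\ref{prop71} is applicable with $q_0=s$, $q=q^{*}$, and the display above becomes $1\le C\,\delta_1^{\s-p+1}$, strictly less than $1$ once $\delta_1$ is small enough. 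This contradicts $S(T)=1$, so $T=+\infty$.

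The step I expect to be the main obstacle is the careful algebraic identification of $q^{*}$ and the verification $q^{*}\ge s$; the rest is a straightforward transcription of the contradiction argument from Lemma~\ref{lemma4old}, with the $L^s$-smallness feeding the smoothing bound and the $L^{\s N/p}$-smallness entering only to validate the hypothesis \eqref{eq711} of Proposition~\ref{prop71}.
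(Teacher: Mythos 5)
Your proof is correct, but it takes a genuinely different and in fact cleaner route than the paper's. The paper's proof splits $S(T)$ at $t=1$, writing $S(T)\le J_1+J_2$ with $J_1=\sup_{0<t<1}t\|u(t)\|_{L^\infty}^{\sigma-1}$ and $J_2=\sup_{1<t<T}t\|u(t)\|_{L^\infty}^{\sigma-1}$. For $J_1$ it applies Proposition~\ref{lemma3} with $r=s$ together with the $L^s$--$L^s$ bound of Lemma~\ref{lemma71}, producing a time exponent $a=1-\tfrac{N(\sigma-1)}{N(p-2)+ps}>0$, so $t^a\le1$ on $(0,1)$; for $J_2$ it applies Proposition~\ref{lemma3} with $r=q>s$ together with the $L^s$--$L^q$ smoothing of Proposition~\ref{prop71}, producing a time exponent $b$ which is then argued to be negative, so $t^b\le1$ on $(1,T)$. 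This split is why the paper first needs Lemma~\ref{lemma5} to guarantee $T>1$. Your proposal instead applies Proposition~\ref{lemma3} and Proposition~\ref{prop71} just once, with the single exponent $q^*=\tfrac{(\sigma-1)s}{\sigma-p+1}-\tfrac{N(p-2)}{p}$ chosen so that the combined time exponent vanishes identically; you correctly check $q^*>s$ from $s>\sigma_0$, and $\beta(q^*)=\sigma-p+1>0$, so $S(T)\le C\delta_1^{\sigma-p+1}<1$ for $\delta_1$ small, without any time split. What your approach buys: it dispenses with Lemma~\ref{lemma5} altogether, and it also sidesteps a small imprecision in the paper's computation, namely that $b<0$ does not hold for every $q>s$ as written -- one can check $b(q)$ is strictly decreasing with $b(s)=a>0$ and $b(q^*)=0$, so it is negative only for $q>q^*$; the paper implicitly needs $q$ large enough, whereas you pin down the threshold exactly. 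The only thing worth making explicit is that the $L^{\sigma N/p}$ smallness in \eqref{eq66} is what validates hypothesis \eqref{eq711} of Proposition~\ref{prop71} for the fixed $q=q^*$, which you do note; beyond that the argument is complete.
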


\begin{proof}
We suppose by contradiction that $$T<+\infty.$$ Then, by \eqref{eq62}, the definition of $S(t)$ in \eqref{eq34} and by Lemma \ref{lemma5} we can write,
\begin{equation}\label{eq67}
\begin{aligned}
1=S(T)&=\sup_{0<t<T}\,t\|u(t)\|_{L^{\infty}(B_R)}^{\s-1}\\
&\le \sup_{0<t<1}\,t\|u(t)\|_{L^{\infty}(B_R)}^{\s-1}+ \sup_{1<t<T}\,t\|u(t)\|_{L^{\infty}(B_R)}^{\s-1}\\
&=:J_1+J_2\,.
\end{aligned}
\end{equation}
Now, by Lemma \ref{lemma3}, applied with $r=s$, and Lemma \ref{lemma71} with $q=s$, we can write
\begin{equation}\label{eq68}
\begin{aligned}
J_1&\le \,\sup_{0<t<1}\,t\left\{k\,t^{-\frac{N}{N(p-2)+ps}}\left(\sup_{\frac t4<\tau<t}\int_{B_R}u^s\,d\mu\right)^{\frac{p}{N(p-2)+ps}}\right\}^{(\s-1)}\\
&\le  \,\sup_{0<t<1}\, k\,t^{1-\frac{N(\s-1)}{N(p-2)+ps}}\left\|u_0\right\|_{L^{s}(B_R)}^{\frac{ps(\s-1)}{N(p-2)+ps}}\,.
\end{aligned}
\end{equation}
On the other hand, for any $q>s$, by Lemma \ref{lemma3}, applied with $r=q$, and Proposition \ref{prop71} with $q_0=s$, we get
\begin{equation}\label{eq69}
\begin{aligned}
J_2&\le \,\sup_{1<t<T}\,t\left\{k\,t^{-\frac{N}{N(p-2)+pq}}\left(\sup_{\frac t4<\tau<t}\int_{B_R}u^q\,d\mu\right)^{\frac{p}{N(p-2)+pq}}\right\}^{(\s-1)}\\
&\le  \,\sup_{1<t<T}\, k\,t^{1-\frac{N(\s-1)}{N(p-2)+pq}}\sup_{\frac t4<\tau<t}\left\|u(\tau)\right\|_{L^{q}(B_R)}^{\frac{pq(\s-1)}{N(p-2)+pq}}\\
&\le  \,\sup_{1<t<T}\, k\,t^{1-\frac{N(\s-1)}{N(p-2)+pq}}\sup_{\frac t4<\tau<t}\left(Ct^{-\frac{s}{p-2}\left(\frac 1{s}-\frac 1q\right)}\left\|u_0\right\|_{L^{s}(B_R)}^{\frac{s}{q}}\right)^{\frac{pq(\s-1)}{N(p-2)+pq}}\\
&\le \,\sup_{1<t<T}\,\frac{C\, k}{4}\,t^{1-\frac{N(\s-1)}{N(p-2)+pq}-\frac{spq(\s-1)}{(p-2)[N(p-2)+pq]}\left(\frac 1{s}-\frac 1q\right)}\left\|u_0\right\|_{L^{s}(B_R)}^{\frac{ps(\s-1)}{N(p-2)+pq}}\,.
\end{aligned}
\end{equation}
By substituting \eqref{eq68} and \eqref{eq69} into \eqref{eq67} we get
\begin{equation}\label{eq610}
1=S(T)\le \sup_{0<t<1}\, k\,t^{a}\left\|u_0\right\|_{L^{s}(B_R)}^{\frac{ps(\s-1)}{N(p-2)+ps}}+\sup_{1<t<T}\,\frac{C\, k}{4}\,t^{b}\left\|u_0\right\|_{L^{s}(B_R)}^{\frac{ps(\s-1)}{N(p-2)+pq}}\,,
\end{equation}
where we have set
$$
a=1-\frac{N(\s-1)}{N(p-2)+ps},\quad\text{and}\quad b=1-\frac{N(\s-1)}{N(p-2)+pq}-\frac{spq(\s-1)}{(p-2)[N(p-2)+pq]}\left(\frac 1{s}-\frac 1q\right)\,.
$$
Now, observe that, since $s>\max\left\{\frac Np(\s-p+1),1\right\}$ and $q>s$,
$$
a>0;\quad\text{and}\quad b<0\,.
$$
Hence, \eqref{eq610}, due to assumption \eqref{eq66}, reads
\begin{equation*}
1=S(T)< k\, \delta_1^{\frac{ps(\s-1)}{N(p-2)+ps}}\,+\,\frac{C\, k}{4}\delta_1^{\frac{ps(\s-1)}{N(p-2)+pq}}\,.
\end{equation*}
Provided that $\delta_1$ is sufficiently small, thus yielding $1=S(T)<1$, a contradiction. Thus $T=+\infty$.
\end{proof}

\begin{proposition}\label{prop4}
Assume \eqref{costanti}, $p>2$ and $s>\max\left\{\s_0,1\right\}$. Let $u$ be a solution to problem \eqref{eq111} with $u_0\in  L^{\infty}(B_R)$, $u_0\ge0$, such that
$$
\|u_0\|_{L^{s}(B_R)}\le\varepsilon_1,\quad \|u_0\|_{L^{\s\frac Np}(B_R)}\le\varepsilon_1,
$$
 with $\varepsilon_1=\varepsilon_1(\s,p,N,C_{s,p},C_p,s)$ sufficiently small. Then, for any $t\in(0,+\infty)$, for some $\Gamma=\Gamma(\s,p,N,q,s,C_{s,p},C_p)>0$
\begin{equation}\label{eq800}
\|u(t)\|_{L^{\infty}(B_R)}\le \Gamma\, t^{-\frac{1}{p-2}\left(1-\frac{ps}{N(p-2)+pq}\right)}\,\|u_0\|_{L^{s}(B_R)}^{\frac{ps}{N(p-2)+pq}}\,.
\end{equation}
\end{proposition}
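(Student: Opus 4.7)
The plan is to combine the three main tools already established: (i) Lemma \ref{lemma6}, which asserts that the smallness of $u_0$ in $L^s(B_R)\cap L^{\sigma N/p}(B_R)$ forces $S(t)\le 1$ for all $t>0$; (ii) Proposition \ref{lemma3}, which under the hypothesis $S(t)\le 1$ converts $L^r$ control into $L^\infty$ control via a Moser-type iteration; and (iii) Proposition \ref{prop71}, the $L^{q_0}$-$L^q$ smoothing estimate, to propagate the information from $L^s$ to a higher power $L^q$. The answer is then obtained by matching exponents.

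First I would invoke Lemma \ref{lemma6}, which applies because the hypotheses on $\|u_0\|_{L^s(B_R)}$ and $\|u_0\|_{L^{\sigma N/p}(B_R)}$ can be met by choosing $\varepsilon_1$ less than the threshold $\delta_1$ appearing there. This gives $S(t)\le 1$ for every $t\in(0,\infty)$, which is the structural prerequisite for Proposition \ref{lemma3}. Then, fixing $q>s$ and applying Proposition \ref{lemma3} with $r=q$, I would write, for every $t>0$,
\begin{equation*}
\|u(t)\|_{L^\infty(B_R)}\,\le\, k\, t^{-\frac{N}{N(p-2)+pq}}\left[\sup_{t/4<\tau<t}\|u(\tau)\|_{L^q(B_R)}^{q}\right]^{\frac{p}{N(p-2)+pq}}.
\end{equation*}

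Next I would feed into this the $L^s$-$L^q$ smoothing estimate of Proposition \ref{prop71}, taken with $q_0=s$: under the smallness condition $\|u_0\|_{L^{\sigma N/p}(B_R)}\le \tilde\varepsilon_1$, one has
\begin{equation*}
\|u(\tau)\|_{L^q(B_R)}\,\le\, C\,\tau^{-\gamma_q}\|u_0\|_{L^s(B_R)}^{\delta_q},\qquad \gamma_q=\tfrac{s}{p-2}\bigl(\tfrac{1}{s}-\tfrac{1}{q}\bigr),\ \delta_q=\tfrac{s}{q}.
\end{equation*}
Substituting this in the previous display and using the monotonicity in $\tau$ of the power of $\tau$ (or just evaluating at the smallest time $\tau=t/4$ up to a multiplicative constant) yields an estimate of the form $\Gamma\,t^{-A}\|u_0\|_{L^s(B_R)}^{B}$ with
\[
A=\tfrac{N}{N(p-2)+pq}+\gamma_q\cdot\tfrac{pq}{N(p-2)+pq},\qquad B=\delta_q\cdot\tfrac{pq}{N(p-2)+pq}.
\]
A direct algebraic simplification, using $\gamma_q=\frac{q-s}{(p-2)q}$ and $\delta_q=\frac{s}{q}$, gives
\[
A=\frac{1}{p-2}\left(1-\frac{ps}{N(p-2)+pq}\right),\qquad B=\frac{ps}{N(p-2)+pq},
\]
which is exactly \eqref{eq800}.

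The main technical point is bookkeeping: $\varepsilon_1$ must be chosen small enough that \emph{both} the smallness hypothesis of Lemma \ref{lemma6} (which controls $S(t)\le1$) and that of Proposition \ref{prop71} (which controls the smoothing constant $C$) are simultaneously satisfied. This is handled by setting $\varepsilon_1$ equal to the minimum of the two thresholds, as is done in Remark \ref{rem1}. Note also that the constant $\Gamma$ is independent of $R$, because both Lemma \ref{lemma6} and Proposition \ref{prop71} are uniform in $R$; this will allow the eventual passage to the limit $R\to\infty$ in the proof of Theorem \ref{teo71}. Once the exponents $A$ and $B$ are checked to coincide with those in \eqref{eq800}, no further obstacle arises.
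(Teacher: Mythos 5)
Your proposal is correct and follows essentially the same route as the paper's proof: invoke Lemma \ref{lemma6} to secure $S(t)\le 1$, apply Proposition \ref{lemma3} with $r=q>s$ to reduce to an $L^q$ bound, then insert the $L^s$–$L^q$ smoothing of Proposition \ref{prop71} with $q_0=s$ and simplify the exponents. The algebra checks out, and your remark about taking $\varepsilon_1$ as the minimum of the two thresholds and about uniformity in $R$ matches the paper's bookkeeping.
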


\begin{proof}
Due to Lemma \ref{lemma6}, $$S(t)\le 1\quad \text{for all}\,\,\,t\in(0,+\infty].$$ Therefore, by Lemma \ref{lemma3} and Proposition \ref{prop71} applied with $q_0=s$, for any $q>s$, we get, for all $t\in (0,+\infty)$
\begin{equation*}
\begin{aligned}
\|u(t)\|_{L^{\infty}(B_R)}&\le \|u\|_{L^{\infty}\left(B_R\times\left(\frac t2,t\right)\right)}\,\\
&\le\, k\,t^{-\frac{N}{N(p-2)+pq}}\left[\sup_{\frac t4<\tau<t}\|u(\tau)\|_{L^q(B_R)}^q\right]^{\frac{p}{N(p-2)+pq}}\\
&\le\,\Gamma\,t^{-\frac{N}{N(p-2)+pq}-\frac{s}{p-2}\left(\frac 1{s}-\frac 1q\right)\frac{pq}{N(p-2)+pq}}\|u_0\|_{L^{s}(B_R)}^{\frac{s}{q}\frac{pq}{N(p-2)+pq}}\,.
\end{aligned}
\end{equation*}
Observing that
\begin{equation*}
-\frac{N}{N(p-2)+pq}-\frac{s}{p-2}\left(\frac 1{s}-\frac 1q\right)\frac{pq}{N(p-2)+pq}=-\frac 1{p-2}\left(1-\frac{ps}{N(p-2)+pq}\right)\,,
\end{equation*}
we get the thesis.
\end{proof}

\begin{proof}[Proof of Theorem \ref{teo71}]
We proceed as in the proof of the previous Theorems. Let $\{u_{0,h}\}_{h\ge 0}$ be a sequence of functions such that
\begin{equation}\label{equ0h}
\begin{aligned}
&(a)\,\,u_{0,h}\in L^{\infty}(M)\cap C_c^{\infty}(M) \,\,\,\text{for all} \,\,h\ge 0, \\
&(b)\,\,u_{0,h}\ge 0 \,\,\,\text{for all} \,\,h\ge 0, \\
&(c)\,\,u_{0, h_1}\leq u_{0, h_2}\,\,\,\text{for any } h_1<h_2,  \\
&(d)\,\,u_{0,h}\longrightarrow u_0 \,\,\, \text{in}\,\, L^{s}(M)\quad \textrm{ as }\, h\to +\infty\,.\\
\end{aligned}
\end{equation}
From standard results it follows that problem \eqref{5} has a solution $u_{h,k}^R$ in the sense of Definition \ref{def31} with $u_{0,h}$ as in \eqref{equ0h}; moreover, $u^R_{h,k}\in C\big([0, \infty); L^q(B_R)\big)$ for any $q>1$.
Due to  Proposition \ref{prop4}, \ref{prop71} and Lemmata \ref{lemma71} and \eqref{lemma6}, the solution $u_{h,k}^R$ to problem \eqref{5} satisfies estimates \eqref{eq72}, \eqref{eq715} and \eqref{eq800} for any $t\in(0,+\infty)$, uniformly w.r.t. $R$, $k$ and $h$.
Thus, by standard arguments, we gan pass to the limit as $R\to+\infty$, $k\to+\infty$ and $h\to+\infty$ and we obtain a solution $u$ to problem \eqref{problema}, which fulfills \eqref{eq801}, \eqref{eq23b} and \eqref{eq22b}.

\end{proof}

\section{Porous medium equation with reaction}\label{porous}

We now consider the following nonlinear reaction-diffusion problem:
\begin{equation}\label{aaa}
\begin{cases}
\, u_t= \Delta u^m +\, u^{\s} & \text{in}\,\, M\times (0,T) \\
\,\; u =u_0 &\text{in}\,\, M\times \{0\}\,,
\end{cases}
\end{equation}
where $M$ is an $N-$dimensional complete noncompact Riemannian manifold of infinite volume, $\Delta$ being the Laplace-Beltrami operator on $M$ and $T\in (0,\infty]$. We shall assume throughout this section that $$ N\geq 3,\quad \quad m\,>\,1,\quad \quad  \s\,>\,m,$$ so that we are concerned with the case of  \it degenerate diffusions \rm of porous medium type (see \cite{V}), and that the initial datum $u_0$ is nonnegative. Let L$^q(M)$ be the space of those measurable functions $f$ such that $|f|^q$ is integrable w.r.t. the Riemannian measure $\mu$. We shall always assume that $M$ supports the Sobolev inequality, namely that:
\begin{equation}\label{aS}
(\textrm{Sobolev\ inequality)}\ \ \ \ \ \ \|v\|_{L^{2^*}(M)} \le \frac{1}{C_s} \|\nabla v\|_{L^2(M)}\quad \text{for any}\,\,\, v\in C_c^{\infty}(M),
\end{equation}
where $C_s$ is a positive constant and $2^*:=\frac{2N}{N-2}$. In one of our main results, we shall also suppose that $M$ supports the Poincar\'e inequality, namely that:
\begin{equation}\label{aP}
(\textrm{Poincar\'e\ inequality)}\ \ \ \ \ \|v\|_{L^2(M)} \le \frac{1}{C_p} \|\nabla v\|_{L^2(M)} \quad \text{for any}\,\,\, v\in C_c^{\infty}(M),
\end{equation}
for some $C_p>0$.

Solutions to \eqref{aaa} will be meant in the very weak, or distributional, sense, according to the following definition.

\begin{definition}\label{adef21}
Let $M$ be a complete noncompact Riemannian manifold of infinite volume, of dimension $N\ge3$. Let $m>1$, $\s>m$ and $u_0\in{\textrm L}^{1}_{\textit{loc}}(M)$, $u_0\ge0$. We say that the function $u$ is a solution to problem \eqref{aaa} in the time interval $[0,T)$ if
$$
u\in L^{\s}_{loc}(M\times(0,T)) 
$$
and for any $\varphi \in C_c^{\infty}(M\times[0,T])$ such that $\varphi(x,T)=0$ for any $x\in M$, $u$ satisfies the equality:
\begin{equation*}
\begin{aligned}
-\int_0^T\int_{M} \,u\,\varphi_t\,d\mu\,dt =&\int_0^T\int_{M} u^m\,\Delta\varphi\,d\mu\,dt\,+ \int_0^T\int_{M} \,u^{\s}\,\varphi\,d\mu\,dt \\
& +\int_{M} \,u_0(x)\,\varphi(x,0)\,d\mu.
\end{aligned}
\end{equation*}
\end{definition}

\medskip

First we consider the case that $\s>m+\frac 2 N$ and the Sobolev inequality holds on $M$. In order to state our results we define
\begin{equation}\label{p0}\s_1:=(\s-m)\frac{N}{2}.\end{equation} Observe that $\s_1>1$ whenever $\s>m+\frac 2N$. We comment that the next results improve and in part correct some of the results of \cite{GMP1}. The proofs are omitted since they are identical to the previous ones.

\begin{theorem}\label{ateo1}
Let $M$ be a complete, noncompact, Riemannian manifold of infinite volume and of dimension $N\ge3$, such that the Sobolev inequality \eqref{aS} holds. Let $m>1$, $\s>m+\frac{2}{N}$, $s>\s_1$ and $u_0\in{\textrm L}^{s}(M)\cap L^1(M)$, $u_0\ge0$.

\begin{itemize}
\item[(i)] Assume that
\begin{equation*}\label{a0}
\|u_0\|_{\textrm L^{s}(M)}\,<\,\varepsilon_0,\quad  \|u_0\|_{\textrm L^{1}(M)}<\,\varepsilon_0\,,
\end{equation*}
with $\varepsilon_0=\varepsilon_0(\s,m,N, C_{s})>0$ sufficiently small.
Then problem \eqref{aaa} admits a solution for any $T>0$,  in the sense of Definition \ref{adef21}. Moreover, for any $\tau>0,$ one has $u\in L^{\infty}(M\times(\tau,+\infty))$ and there exists a constant $\Gamma>0$ such that, one has
\begin{equation*}\label{aeq21tot}
\|u(t)\|_{L^{\infty}(M)}\le \Gamma\, t^{-\alpha}\,\|u_0\|_{L^{1}(M)}^{\frac{2}{N(m-1)+2}}\,\quad\text{ for all $t>0$,}
\end{equation*}
where
$$
\alpha:=\frac{N}{N(m-1)+2}\,.
$$
\item[(ii)] Let  $\s_1\le q<\infty$ 
and
\begin{equation*}\label{a2}
\|u_0\|_{L^{\s_1}(M)}< \hat \varepsilon_0
\end{equation*}
for $\hat\varepsilon_0=\hat\varepsilon_0(\s, m , N, C_s, q)>0$ small enough. Then there exists a constant $C=C(m,\s,N,\varepsilon_0,C_s, q)>0$ such that
\begin{equation*}\label{a3}
\|u(t)\|_{L^q(M)}\le C\,t^{-\gamma_q} \|u_{0}\|^{\delta_q}_{L^{\s_1}(M)}\quad \textrm{for all }\,\, t>0\,,
\end{equation*}
where
$$
\gamma_q=\frac{1}{\s-1}\left[1-\frac{N(\s-m)}{2q}\right],\quad \delta_q=\frac{\s-m}{\s-1}\left[1+\frac{N(m-1)}{2q}\right]\,.
$$
 \item[(iii)] Finally, for any $1<q<\infty$, if $u_0\in {\textrm L}^q(M)\cap\textrm L^{\s_1}(M)$ and
\begin{equation*}\label{a5}
\|u_0\|_{\textrm L^{\s_1}(M)}\,<\,\varepsilon
\end{equation*}
with $\varepsilon=\varepsilon(\s,m,N,r, C_s,q)>0$ sufficiently small, then
\begin{equation*}\label{a6}
\|u(t)\|_{L^q(M)}\le  \|u_{0}\|_{L^q(M)}\quad \textrm{for all }\,\, t>0\,.
\end{equation*}
\end{itemize}
\end{theorem}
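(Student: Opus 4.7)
My plan is to mirror verbatim the strategy of Theorems \ref{teo1}--\ref{teo71}, replacing the $p$-Laplacian diffusion $\dive(|\nabla u|^{p-2}\nabla u)$ by the porous-medium diffusion $\Delta u^m$. The algebraic substitution is $p\rightsquigarrow 2$ together with $(p+q-2)\rightsquigarrow(m+q-1)$: this reflects the chain-rule identity
\[
-\int u^{q-1}\Delta u^m\,d\mu \;=\; \frac{4m(q-1)}{(m+q-1)^2}\int\left|\nabla u^{(m+q-1)/2}\right|^2\!d\mu,
\]
and it sends the critical exponents $\sigma_0=(\sigma-p+1)N/p$ and $p-1+p/N$ of the $p$-Laplacian theory to $\sigma_1=(\sigma-m)N/2$ and $m+2/N$ of the present theorem. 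I would first regularise $u_0$ by an increasing sequence $\{u_{0,h}\}\subset L^\infty\cap C_c^\infty$ converging to $u_0$ in $L^s(M)\cap L^1(M)$, then, for each $R,k,h$, solve the Cauchy--Dirichlet problem on $B_R$ with zero boundary values and truncated reaction $T_k(u^\sigma)$; standard theory of the Cauchy--Dirichlet problem for the PME with a bounded Lipschitz reaction yields bounded weak solutions $u^R_{h,k}\in C([0,T];L^q(B_R))$ for every $q\in(1,\infty)$.

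Then I would establish the PME counterparts of Lemma \ref{lemma41}, Proposition \ref{prop42} and Proposition \ref{lemma3}. For the $L^q$--$L^q$ estimate, multiplying the equation by $u^{q-1}$ and integrating by parts gives
\[
\frac{1}{q}\frac{d}{dt}\|u\|_{L^q(B_R)}^q+\frac{4m(q-1)}{(m+q-1)^2}\!\int\!\!\left|\nabla u^{(m+q-1)/2}\right|^2\!d\mu\;\le\;\int u^{\sigma+q-1}d\mu,
\]
and H\"older applied with conjugate exponents $N/2$ and $N/(N-2)$, together with Sobolev \eqref{aS} applied to $v=u^{(m+q-1)/2}$, bounds
\[
\int u^{\sigma+q-1}d\mu\;\le\;\|u\|_{L^{\sigma_1}(B_R)}^{\sigma-m}\,\|u\|_{L^{(m+q-1)N/(N-2)}(B_R)}^{m+q-1}.
\]
Smallness of $\|u_0\|_{L^{\sigma_1}}$ then lets the continuity bootstrap of Lemma \ref{lemma41} run and force $t\mapsto\|u(t)\|_{L^q}$ to be nonincreasing; the Moser iteration of Proposition \ref{prop42} goes through with the sequence $q_n=\tfrac{N}{N-2}(m+q_{n-1}-1)$, yielding the $L^{q_0}$--$L^{q_{\bar n}}$ smoothing and hence, by interpolation with the $L^{q_0}$ bound, parts (ii) and (iii) with the stated $\gamma_q,\delta_q$. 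For the $L^\infty$ smoothing of (i), the De Giorgi machinery of Section \ref{aux} is transcribed unchanged: testing against $[G_{k_i}(u)]^{q-1}\xi_i$ on the nested cylinders $U_i$ gives the analogue of Proposition \ref{lemma3} with $N(p-2)+pr$ replaced by $N(m-1)+2r$. Setting $r=1$ and running the bootstrap of Lemma \ref{lemma4} under the hypotheses $\|u_0\|_{L^s},\|u_0\|_{L^1}<\varepsilon_0$ forces $S(t)=\sup_{\tau}\tau\|u(\tau)\|_\infty^{\sigma-1}\le 1$ for all $t>0$ and produces the announced decay rate $t^{-N/(N(m-1)+2)}$. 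Finally, a standard passage to the limit $k\to\infty$, $R\to\infty$, $h\to\infty$, using monotonicity in $h$ for the required compactness and the uniform $L^\infty$ bound for identification of the limit in the distributional formulation of Definition \ref{adef21}, concludes the proof.

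The main obstacle is the explicit verification, in the Moser step, that the smallness threshold on $\|u_0\|_{L^{\sigma_1}}$ can be chosen uniformly along the finite segment $\{q_0,q_1,\dots,q_{\bar n}\}$: the factor
\[
(q_n-1)\left(\tfrac{2}{m+q_n-1}\right)^{2}C_s^{2}\;-\;\|u_0\|_{L^{\sigma_1}}^{\sigma-m}
\]
must remain positive for every index used in the iteration. As in Remark \ref{rem2}, this forces the smallness parameter to depend on $q$ and to degenerate as $q\to+\infty$, a limitation inherited from the $p$-Laplacian proof that causes no trouble at finite $q$ but prevents obtaining the $L^\infty$ bound by pushing $q\to+\infty$ in the Moser scheme; the $L^\infty$ bound must instead be reached via the independent De Giorgi argument above, which is exactly the reason why Section \ref{aux} is needed here as in the $p$-Laplacian case.
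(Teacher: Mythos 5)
Your proposal is correct and takes exactly the approach the paper intends: the paper explicitly omits the proof of this theorem, stating the arguments are identical to the $p$-Laplacian case, and your systematic substitution ($p\rightsquigarrow 2$, $p+q-2\rightsquigarrow m+q-1$, degeneracy exponent $p-2\rightsquigarrow m-1$), together with the chain-rule identity for $\Delta u^m$, is precisely the translation the authors have in mind. The Moser sequence $q_n=\tfrac{N}{N-2}(m+q_{n-1}-1)$, the De Giorgi iteration with $N(p-2)+pr\rightsquigarrow N(m-1)+2r$, and the final limiting procedure all reproduce the paper's scheme and yield the stated exponents.
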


\begin{theorem}
Let $M$ be a complete, noncompact manifold of infinite volume and of dimension $N\ge3$, such that the Sobolev inequality \eqref{aS} holds. Let $m>1$, $\s>m+\frac{2}{N}$ and $u_0\in{\textrm L}^{\s_1}(M)$, $u_0\ge0$ where $\s_1$ has been defined in \eqref{p0}.
Assume that
\begin{equation*}\label{a1}
\|u_0\|_{\textrm L^{\s_1}(M)}\,<\,\varepsilon_0
\end{equation*}
with $\varepsilon_0=\varepsilon_0(\s,m,N,r, C_s)>0$ sufficiently small. Then problem \eqref{aaa} admits a solution for any $T>0$,  in the sense of Definition \ref{adef21}. Moreover, for any $\tau>0,$ one has $u\in L^{\infty}(M\times(\tau,+\infty))$ and there exists a constant $\Gamma>0$ such that, one has
\begin{equation*}
\|u(t)\|_{L^{\infty}(M)}\le \Gamma\, t^{-\frac1{\s-1}}\|u_0\|_{L^{\s_1}(M)}^{\frac{\s-m}{\s-1}}\quad \text{for all $t>0$.}
\end{equation*}
Moreover, the statements in (ii) and (iii) of Theorem \ref{ateo1} hold.

\end{theorem}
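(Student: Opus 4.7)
The strategy is to mirror, step by step, the proof of Theorem \ref{teo11}, performing throughout the substitutions $p \rightsquigarrow 2$, $p-1 \rightsquigarrow m$, $p^{*} \rightsquigarrow 2^{*} = \tfrac{2N}{N-2}$, and $\s_0 \rightsquigarrow \s_1 = (\s-m)\tfrac{N}{2}$, which are the natural translations between the $p$-Laplacian and porous medium frameworks. As in Sections \ref{dim1}--\ref{dim2}, one works with the truncated problem on $B_R$ obtained by replacing the diffusion with $\Delta u^m$ and the reaction with $T_k(u^{\s})$, for which classical parabolic theory provides a unique bounded solution $u_{h,k}^{R} \in C([0,T]; L^q(B_R))$ for every $q\in(1,\infty)$ when $u_0 \in L^\infty\cap C_c^\infty(M)$.

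First, I would establish the porous medium analogue of Lemma \ref{lemma41}: testing the equation with $u^{q-1}$ and integrating by parts yields
\begin{equation*}
\frac{1}{q}\frac{d}{dt}\|u\|_{L^q(B_R)}^{q} = -\frac{4m(q-1)}{(m+q-1)^2}\int_{B_R}\bigl|\nabla u^{(m+q-1)/2}\bigr|^2 d\mu + \int_{B_R} T_k(u^\s) u^{q-1}d\mu,
\end{equation*}
after which the Sobolev inequality \eqref{aS} bounds the gradient term from below by $C_s^2\|u\|_{L^{(m+q-1)N/(N-2)}}^{m+q-1}$, and the Hölder inequality with conjugate exponents $\tfrac{N}{N-2},\tfrac{N}{2}$ controls the reaction by $\|u\|_{L^{\s_1}}^{\s-m}\|u\|_{L^{(m+q-1)N/(N-2)}}^{m+q-1}$. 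A bootstrap argument in time, based on the smallness hypothesis on $\|u_0\|_{L^{\s_1}}$ and the continuity of $\tau\mapsto u(\tau)$ in $L^{\s_1}$, then shows that the right-hand side stays nonpositive, so $\|u(t)\|_{L^{q}(B_R)} \le \|u_0\|_{L^q(B_R)}$ for all $t>0$. A Moser iteration on the geometric sequence $q_n = \tfrac{N}{N-2}(m-1+q_{n-1})$ (replacing $q_n = \tfrac{N}{N-p}(p-2+q_{n-1})$) delivers the $L^{q_0}$--$L^{q}$ smoothing analogous to Proposition \ref{prop42}, with
\begin{equation*}
\gamma_q = \left(\frac{1}{q_0}-\frac{1}{q}\right)\frac{N q_0}{2q_0+N(m-1)},\qquad \delta_q = \frac{q_0}{q}\,\frac{q+\tfrac{N}{2}(m-1)}{q_0+\tfrac{N}{2}(m-1)};
\end{equation*}
this already gives statements (ii) and (iii) of Theorem \ref{ateo1} at the approximate level.

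Second, I would transcribe the auxiliary machinery of Section \ref{aux} and Proposition \ref{lemma3}. Those arguments use only the cut-off/energy identity, the Sobolev inequality, and the interpolation $\tfrac{u}{k_i}\chi_i\le \tfrac{u-k_{i+1}}{k_i-k_{i+1}}\chi_i$, and they translate verbatim with $|\nabla u|^{p-2}\nabla u$ replaced by $\nabla u^m$ and the resulting exponent shifts. Setting $S(t)=\sup_{0<\tau<t}\tau\|u(\tau)\|_{L^\infty(B_R)}^{\s-1}$ and assuming $S(t)\le 1$, this yields
\begin{equation*}
\|u\|_{L^\infty(B_R\times(t/2,t))} \le k\,t^{-\frac{N}{N(m-1)+2r}}\Bigl[\sup_{t/4<\tau<t}\int_{B_R} u^r\,d\mu\Bigr]^{\frac{2}{N(m-1)+2r}}.
\end{equation*}
Combining this with the $L^{\s_1}$--$L^{q}$ smoothing from the first step (choosing $q>\s_1$) and computing the exponents exactly as in \eqref{eq342old}--\eqref{eq343old} makes the $t$-dependence cancel; the contradiction argument of Lemma \ref{lemma4old} then forces $T:=\sup\{t>0:S(t)\le 1\}=+\infty$ for $\varepsilon_0$ small enough, and reinserting $S(t)\le 1$ into the previous display produces the claimed $L^\infty$-bound $\|u(t)\|_{L^\infty(B_R)}\le \Gamma\,t^{-1/(\s-1)}\|u_0\|_{L^{\s_1}(B_R)}^{(\s-m)/(\s-1)}$.

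Finally, the standard approximation of Section \ref{dim2} closes the argument: pick $u_{0,l}\in L^\infty\cap C_c^\infty(M)$ with $u_{0,l}\uparrow u_0$ in $L^{\s_1}(M)$, solve the truncated problem on $B_R$, apply the uniform-in-$R,k,l$ estimates just derived, and let $R,k,l\to\infty$ to recover a weak solution on $M$ in the sense of Definition \ref{adef21} inheriting all the bounds. The delicate point, exactly as in the $p$-Laplacian case, is the bootstrap propagation of smallness in $L^{\s_1}$: it is what permits the reaction term to be absorbed into the dissipation in the energy identity, and it is the only place where the precise value of $\varepsilon_0$ enters, the sharp choice being (as in Remark \ref{rem2}) the minimum over the Moser sequence $\{q_n\}$ up to the first index exceeding $q$.
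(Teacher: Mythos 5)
Your proposal is correct and matches the paper's intent: the paper explicitly states (end of the introduction to Section~\ref{porous}) that the proofs of the porous-medium results are ``omitted since they are identical to the previous ones,'' i.e.\ they are obtained from the $p$-Laplacian proofs by exactly the translation you describe. Your substitution dictionary ($p\rightsquigarrow 2$ as the Lebesgue/Sobolev exponent, $p-1\rightsquigarrow m$ as the nonlinearity exponent, so that $p+q-2\rightsquigarrow m+q-1$, $p^*\rightsquigarrow 2^*$, $\s_0\rightsquigarrow\s_1$) is applied consistently and reproduces the correct energy identity, Sobolev/Hölder splitting, Moser sequence $q_n=\tfrac{N}{N-2}(m-1+q_{n-1})$, smoothing exponents $\gamma_q,\delta_q$, and the $S(t)$-bootstrap, so nothing is missing.
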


\medskip

In the next theorem, we address the case that $\s>m$, supposing that both the inequalities \eqref{aS} and \eqref{aP} hold on $M$.

\begin{theorem}
Let $M$ be a complete, noncompact manifold of infinite volume and of dimension $N\ge3$, such that the Sobolev inequality \eqref{aS} and the Poincaré inequality \eqref{aP} hold. Let
$$
m>1,\quad \s>m,
$$
and $u_0\in{\textrm L}^{s}(M)\cap {\textrm L}^{\s\frac N2}(M)$ where $s>\max\left\{1,\s_1\right\}$, $u_0\ge0$.
Assume that
\begin{equation*}\label{a7}
\left\| u_0\right\|_{L^{s}(M)}\,<\,\varepsilon_1, \quad \left\| u_0\right\|_{L^{\s\frac N2}(M)}\,<\,\varepsilon_1,
\end{equation*}
holds with $\varepsilon_1=\varepsilon_1(m,\s,N,r, C_p,C_s)>0$ sufficiently small. Then problem \eqref{aaa} admits a solution for any $T>0$,  in the sense of Definition \ref{adef21}. Moreover for any $\tau>0$ and for any $q>s$ one has $u\in L^{\infty}(M\times(\tau,+\infty))$ and for all $t>0$ one has
\begin{equation*}\label{a8}
\|u(t)\|_{L^{\infty}(B_R)}\le \Gamma\, t^{-\beta_{q,s}}\,\|u_0\|_{L^{s}(B_R)}^{\frac{2s}{N(m-1)+2q}}\,,
\end{equation*}
where
\begin{equation*}\label{a9}
\beta_{q,s}:=\frac{1}{m-1}\left(1-\frac{2s}{N(m-1)+2q}\right)>0\,.
\end{equation*}
Moreover, let $s\le q<\infty$ and
\begin{equation*}\label{a10}
\|u_0\|_{L^{s}(M)}<\hat\varepsilon_1,
\end{equation*}
for some $\hat\varepsilon_1=\hat \varepsilon_1(\s, m ,N, r, C_p, C_s, q,s)>0$ sufficiently small. Then there exists a constant $C=C(\s,m,N,\varepsilon_1,C_s,C_p,q,s)>0$ such that
\begin{equation*}\label{a11}
\|u(t)\|_{L^q(M)}\le Ct^{-\gamma_q} \|u_{0}\|_{L^s(M)}^{\delta_q}\quad \textrm{for all }\,\, t>0\,,
\end{equation*}
where
$$
\gamma_q:=\frac{s}{m-1}\left[\frac 1s-\frac 1q\right],\quad\quad \delta_q:=\frac sq.
$$
Finally, for any $1<q<\infty$, if $u_0\in L^q(M)\cap L^s(M)\cap {\textrm L}^{\s\frac N2}(M)$ and
\begin{equation*}\label{a11}
\|u_0\|_{L^{s}(M)}<\varepsilon,
\end{equation*}
for some $\varepsilon= \varepsilon(\s, m ,N, C_p, C_s, q)>0$ sufficiently small. Then
\begin{equation*}\label{a12}
\|u(t)\|_{L^q(M)}\le  \|u_{0}\|_{L^q(M)}\quad \textrm{for all }\,\, t>0\,.
\end{equation*}
\end{theorem}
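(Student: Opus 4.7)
\smallskip

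\noindent\textbf{Proof proposal.} The argument is a direct transcription of the one used for Theorem \ref{teo71}, with the $p$-Laplacian structure $\operatorname{div}(|\nabla u|^{p-2}\nabla u)$ replaced by $\Delta u^m$ and the algebraic role of $p+q-2$ taken over by $m+q-1$. The plan is to first produce, for each $R>0$, $k>0$, $h>0$, a bounded approximating solution $u^R_{h,k}$ to the Dirichlet problem on $B_R$ with truncated reaction $T_k(u^\sigma)$ and smooth approximating data $u_{0,h}\in L^\infty\cap C_c^\infty(M)$ satisfying an analogue of \eqref{equ0h}, now requiring in addition $u_{0,h}\to u_0$ in $L^{\sigma N/2}(M)$. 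By standard parabolic theory such solutions exist, are nonnegative, lie in $C([0,T]; L^q(B_R))$ for every $q\in(1,\infty)$, and satisfy $u^R_{h,k}\le u^{R'}_{h',k'}$ for $R\le R'$, $h\le h'$, $k\le k'$. At the end, I will let $k\to\infty$, $R\to\infty$, $h\to\infty$, in this order, and produce a weak solution to \eqref{aaa} in the sense of Definition \ref{adef21}.

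The three basic building blocks to establish, uniformly in $R,k,h$, are the PME analogues of Lemma \ref{lemma71}, Proposition \ref{prop71}, and Proposition \ref{lemma3}. To get the $L^q$--$L^q$ estimate, multiply the equation by $u^{q-1}$, integrate by parts, and use that
\[
\int_{B_R} \nabla u^m\cdot \nabla u^{q-1}\,d\mu \;=\; \frac{4m(q-1)}{(m+q-1)^2}\int_{B_R}\bigl|\nabla u^{(m+q-1)/2}\bigr|^2 d\mu.
\]
Split this Dirichlet energy as $c_1(\cdot)+c_2(\cdot)$ with $c_1+c_2=1$: use \eqref{aP} on the first piece to produce a term of the form $c_1 C_p^2\|u\|^{m+q-1}_{L^{m+q-1}}$, and on the second piece keep a fraction $c_2 C_p^{2\alpha}\|u\|^{\alpha(m+q-1)}_{L^{m+q-1}}\|\nabla u^{(m+q-1)/2}\|^{2-2\alpha}_{L^2}$. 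For the reaction term $\int u^{\sigma+q-1}d\mu$, interpolate $L^{\sigma+q-1}$ between $L^{m+q-1}$ and $L^{\sigma+m+q-1}$, and then control the latter by H\"older between $L^{\sigma N/2}$ and $L^{(m+q-1)N/(N-2)}$, the latter being handled by Sobolev \eqref{aS}. The correct choice is $\alpha=(m-1)/\sigma$, exactly as in \eqref{eq76} with $p\rightsquigarrow m+1$, after which the gradient factor arising from reaction matches the one kept from the diffusion, and the smallness of $\|u\|_{L^{\sigma N/2}}$ (propagated from the initial condition by continuity as in the proof of Lemma \ref{lemma71}) allows the absorption, yielding $t\mapsto \|u(t)\|_{L^q}$ nonincreasing. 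Then Moser iteration is run with the linear sequence $q_m=q_0+m(m-1)$, the analogue of \eqref{eq70}; the fact that $m+q_{m-1}-1=q_m$ is precisely what makes the scheme close. Telescoping in time over dyadic intervals as in \eqref{eq717}--\eqref{eq734} delivers the $L^{q_0}$--$L^q$ smoothing estimate with exponents $\gamma_q=\frac{q_0}{m-1}(1/q_0-1/q)$ and $\delta_q=q_0/q$, i.e.\ the PME counterpart of \eqref{eq716}.

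The $L^r$--$L^\infty$ estimate is obtained by a De Giorgi truncation argument following verbatim Lemmata \ref{lemma1}--\ref{lemma2} and Proposition \ref{lemma3}: test against $[G_{k_i}(u)]^{q-1}\xi_i$, use Sobolev on $[G_{k_{i+1}}(u)]^{(m+q-1)/2}$, and iterate on the level sets, exploiting that $S(t)\le 1$ to convert the reaction contribution into a small perturbation. With these three tools in hand, the argument of Lemmata \ref{lemma5}--\ref{lemma6} goes through unchanged: one splits $S(T)=\sup_{0<t<1}+\sup_{1<t<T}$, bounds the first piece via the $L^s$--$L^\infty$ estimate and the $L^s$--$L^s$ nonincrease (using $\|u_0\|_{L^s}<\varepsilon_1$), and bounds the second piece by first applying the $L^s$--$L^q$ smoothing for some $q>s$ and then the $L^q$--$L^\infty$ estimate; for $s>\max\{\sigma_1,1\}$ the time exponent on $(1,\infty)$ is strictly negative while on $(0,1)$ strictly positive, so that both contributions are made smaller than $1/2$ by a final smallness choice of $\varepsilon_1$, contradicting $S(T)=1$ and hence forcing $T=+\infty$. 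Plugging $S\le 1$ back into Proposition \ref{lemma3} and combining with the $L^s$--$L^q$ smoothing yields the decay rate $t^{-\beta_{q,s}}$ with $\beta_{q,s}=(m-1)^{-1}(1-2s/[N(m-1)+2q])$ claimed in the statement.

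The main obstacle is bookkeeping the algebraic exponents when one combines Poincar\'e on one part of the Dirichlet energy with Sobolev and H\"older in the reaction term: the whole scheme hinges on the identity $\alpha=(m-1)/\sigma$ making the $\|\nabla u^{(m+q-1)/2}\|_{L^2}$ factors on both sides of the differential inequality have the same exponent. Once this is verified, every remaining estimate is an exact imitation of the $p$-Laplacian proof, with $p+q-2$ replaced by $m+q-1$, $p(p-2)$ by $m-1$, and $C_{s,p}, C_p$ left untouched. Finally, to pass to the limit one uses the monotonicity in $R,k,h$ together with the uniform $L^\infty_{\mathrm{loc}}$ bounds from the above estimates; the usual compactness arguments (e.g.\ based on the energy bound and time equicontinuity) yield convergence of $u^R_{h,k}^m$ in $L^1_{\mathrm{loc}}$, which is enough to pass to the limit in the distributional formulation of Definition \ref{adef21} and to transfer \eqref{eq72}, \eqref{eq715} and the $L^s$--$L^\infty$ bound to the limit solution on $M$.
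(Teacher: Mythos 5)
Your plan reproduces exactly what the paper intends: Section~7 explicitly states that the proofs of the porous--medium results are omitted because they are ``identical to the previous ones,'' and your outline is the faithful transcription of the $p$-Laplacian scheme, with the degeneracy exponent $p+q-2$ replaced by $m+q-1$, the Moser step $p-2$ replaced by $m-1$, and the Sobolev/Poincar\'e functional exponent $p$ replaced by $2$ since the PME is second order.

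There is, however, an arithmetic slip in the one place where the bookkeeping is delicate, namely the choice of $\alpha$ in the split of the Dirichlet energy. With the interpolation between $L^{m+q-1}$ and $L^{\sigma+m+q-1}$ targeting $L^{\sigma+q-1}$, the correct weight is $\theta=\frac{m(m+q-1)}{\sigma(\sigma+q-1)}$, and since $\sigma(\sigma+q-1)-m(m+q-1)=(\sigma-m)(\sigma+m+q-1)$ one gets
\begin{equation*}
(1-\theta)\,\frac{\sigma+q-1}{\sigma+m+q-1}=\frac{\sigma(\sigma+q-1)-m(m+q-1)}{\sigma(\sigma+m+q-1)}=\frac{\sigma-m}{\sigma}\,,
\end{equation*}
so matching $2-2\alpha$ with the gradient exponent $2(1-\theta)\frac{\sigma+q-1}{\sigma+m+q-1}$ forces $\alpha=\frac{m}{\sigma}$, \emph{not} $\frac{m-1}{\sigma}$. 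This is also what your own stated substitution rule $p\rightsquigarrow m+1$ applied to $\alpha=(p-1)/\sigma$ would give, so the written value contradicts your rule. With $\alpha=m/\sigma$ the scheme closes: the $\|u\|_{L^{m+q-1}}$ exponents also match, since $\alpha(m+q-1)=\frac{m(m+q-1)}{\sigma}=\theta(\sigma+q-1)$, and the surviving factor of $\|u\|_{L^{\sigma N/2}}$ carries the clean exponent $\sigma-m$. Once this is corrected, the rest of the argument (Moser iteration with step $m-1$, the De Giorgi level-set estimate, and the $S(T)=1$ contradiction via the split $\sup_{0<t<1}+\sup_{1<t<T}$) goes through as you describe and delivers the three stated estimates.
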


%
%
\bigskip
\bigskip
\bigskip


\begin{thebibliography}{5}

\bibitem{A} D. Alikakos, \emph{$L^p$ bounds of solutions of reaction-diffusion equations}, Comm. Partial Differential Equations {\bf 4} (1979), 827--868\,.



\bibitem{BPT} C. Bandle, M.A. Pozio, A. Tesei, \em The Fujita exponent for the Cauchy problem in the hyperbolic space\rm,
J. Differential Equations \bf 251 \rm (2011), 2143--2163.


\bibitem{CFG} X. Chen, M. Fila, J.S. Guo, \emph{Boundedness of global solutions of a supercritical parabolic equation}, Nonlinear Anal. {\bf 68} (2008), 621--628.

\bibitem{DL}  K. Deng, H.A. Levine, \em The role of critical exponents in blow-up theorems: the sequel\rm, J. Math. Anal. Appl. \bf 243 \rm (2000), 85--126.

\bibitem{F} H. Fujita, \em On the blowing up of solutions of the Cauchy problem for $u_t=\Delta u+u^{1+\alpha}$\rm, J. Fac. Sci. Univ. Tokyo Sect. I \textbf{13} (1966), 109--124.

\bibitem{FI}  Y. Fujishima, K. Ishige, \em Blow-up set for type I blowing up solutions for a semilinear heat equation\rm, Ann. Inst. H. Poincaré Anal. Non Lin\'eaire \bf 31 \rm (2014), 231--247.

\bibitem{G1} V.A. Galaktionov, \em Conditions for the absence of global solutions for a class of quasilinear parabolic equations\rm, Zh. Vychisl. Mat. Mat. Fiz., {\bf 22} (1982), 322--338.

\bibitem{G2} V.A. Galaktionov, \em Blow-up for quasilinear heat equations with critical Fujita's exponents\rm, Proc. R. Soc. Edinb. Sect. A, {\bf 124} (1994), 517--525.

\bibitem{G3} V.A. Galaktionov, H-A- Levine \em A general approach to critical Fujita exponents in nonlinear parabolic problems\rm, Nonlin. Anal., {\bf 34} (1998), 1005--1027.


\bibitem{Grig1} A. Grigor'yan, \emph{Analytic and geometric background of recurrence and non-explosion of the Brownian motion on Riemannian manifolds}, Bull.~Amer.~Math.~Soc. {\bf 36} (1999), 135--249.

\bibitem{Grig2} A. Grigor'yan, ``Heat Kernel and Analysis on Manifolds'', AMS/IP Studies in Advanced Mathematics, 47, American Mathematical Society, Providence, RI; International Press, Boston, MA, 2009.


\bibitem{GMP1} G.Grillo, G. Meglioli, F. Punzo, \emph{Global existence of solutions and smoothing effects for classes of reaction-diffusion equations on manifolds}\rm, J. Evol. Equ. \bf 21 \rm (2021), 2339--2375.

\bibitem{GMP2} G.Grillo, G. Meglioli, F. Punzo, \emph{Smoothing effects and infinite time blowup for reaction-diffusion equations: An approach via Sobolev and Poincaré inequalities}\rm, J. Math. Pures Appl. \bf 151 \rm (2021), 99--131.

\bibitem{GMP3} G.Grillo, G. Meglioli, F. Punzo, \emph{Blow-up versus global existence of solutions for reaction-diffusion equations on classes of Riemannian manifolds} \rm, submitted (2022).

%
%
%
%
%
%
%

\bibitem{H} K. Hayakawa, \em On nonexistence of global solutions of some semilinear parabolic differential equations\rm, Proc. Japan Acad. \textbf{49} (1973), 503--505.

\bibitem{KST} K. Kobayashi, T. Sirao, and H. Tanaka, {\it On the growing up problem for semilinear heat equations}, J. Math. Soc. Japan {\bf 29} (1977), 407--424.

%
%
\bibitem{LSU} O.A. Ladyzhenskaya, V.A. Solonnikov, N.N. Ural'tseva, \emph{Linear and quasilinear equations of parabolic type}\rm, Am. Math. Soc., Providence, R.I. (1968).
%
\bibitem{L} H.A. Levine, \em The role of critical exponents in blow-up theorems\rm, SIAM Rev. \bf 32 \rm (1990), 262--288.

\bibitem{MT1} A.V. Martynenko, A. F. Tedeev, \em On the behavior of solutions of the Cauchy problem for a degenerate parabolic equation with nonhomogeneous density and a source, \rm (Russian) Zh. Vychisl. Mat. Mat. Fiz. \bf 48  \rm (2008), no. 7, 1214-1229; transl. in Comput. Math. Math. Phys. \bf 48 \rm (2008), no. 7, 1145-1160.



\bibitem{MMP} P. Mastrolia, D. D. Monticelli, F. Punzo, \em{Nonexistence of solutions to parabolic differential inequalities with a potential on Riemannian manifolds}, \rm Math. Ann. 367 (2017), 929-963.

\bibitem{MeMoPu} G. Meglioli, D.D. Monticelli F. Punzo, \em Nonexistence of solutions to quasilinear parabolic equations with a potential in bounded domains, \rm  Calc. Var. and PDEs, 269 (2022), 61:23.



\bibitem{Mitidieri} E. Mitidieri, S.I. Pohozaev, ``A priori estimates and the absence of solutions of nonlinear partial differential equations and inequalities''. Tr. Mat. Inst. Steklova, {\bf 234} (2001), 1--384.

\bibitem{Mitidieri2} E. Mitidieri, S.I. Pohozaev, ``Towards a unified approach to nonexistence of solutions for a class of differential inequalities''. Milan J. Math., {\bf 72} (2004), 129--162.


\bibitem{Poho} S.I. Pohozaev, A. Tesei ``Nonexistence of local solutions to semilinear partial differential inequalities''. Ann. Inst. H. Poincare Anal. Nonlin., {\bf 21} (2004), 487--502.

\bibitem{PT}F. Punzo, A. Tesei, ``On a semilinear parabolic equation with inverse-square potential''. Atti Accad. Naz. Lincei Cl. Sci. Fis. Mat. Natur. Rend. Lincei (9) Mat. Appl. {\bf 21}, (2010), 359--396.


\bibitem{Pu3} F. Punzo, {\it Blow-up of solutions to semilinear parabolic equations on Riemannian manifolds with negative sectional curvature}, J. Math. Anal. Appl. {\bf 387} (2012), 815--827.

\bibitem{Q} P. Quittner, {\it The decay of global solutions of a semilinear heat equation}, Discrete Contin. Dyn. Syst. {\bf 21} (2008), 307--318.



\bibitem{S} P. Souplet, \emph{Morrey spaces and classification of global solutions for a supercritical semilinear heat equation in $\mathbb R^N$}, J. Funct. Anal. {\bf 272} (2017), 2005--2037.

 \bibitem{Sun1} Q. Gu, Y. Sun, J. Xiao, F. Xu, \emph{Global positive solution to a semi-linear parabolic equation with potential on Riemannian manifold} Calc. Var. and PDEs, {\bf 59} (2020) 170.


\bibitem{V} J.L. V\'azquez, ``The Porous Medium Equation. Mathematical Theory'', Oxford Mathematical Monographs. The Clarendon Press, Oxford University Press, Oxford, 2007.


\bibitem{WY} Z. Wang, J. Yin, {\it A note on semilinear heat equation in hyperbolic space}, J. Differential Equations {\bf 256} (2014), 1151--1156.

\bibitem{WY2} Z. Wang, J. Yin, {\it Asymptotic behaviour of the lifespan of solutions for a semilinear heat equation in hyperbolic space}, Proc. Roy. Soc. Edinburgh Sect. A {\bf 146} (2016) 1091--1114.

\bibitem{W} F.B. Weissler, \em $L^p$-energy and blow-up for a semilinear heat equation\rm, Proc. Sympos. Pure Math. \bf 45 \rm (1986), 545--551.

\bibitem{Y} E. Yanagida, \emph{Behavior of global solutions of the Fujita equation}, Sugaku Expositions {\bf 26} (2013), 129--147.

\bibitem{Z} Q.S. Zhang, \em Blow-up results for nonlinear parabolic equations on manifolds\rm, Duke Math. J. \bf 97 \rm (1999), 515--539.


\end{thebibliography}
%


\end{document}